\numberwithin{equation}{section}
\def\ca{{\mathcal A}}
\def\cb{{\mathcal B}}
\def\cc{{\mathcal C}}
\def\cd{{\mathcal D}}
\def\ch{{\mathcal H}}
\def\ck{{\mathcal K}}
\def\cq{{\mathcal Q}}
\def\car{{\mathcal R}}
\def\cs{{\mathcal S}}
\def\ga{{\mathfrak A}}
\def\ba{{\mathbb A}}
\def\bc{{\mathbb C}}
\def\bn{{\mathbb N}}
\def\bq{{\mathbb Q}}
\def\br{{\mathbb R}}
\def\bt{{\mathbb T}}
\def\bz{{\mathbb Z}}
\def\a{\alpha}
\def\b{\beta}
\def\g{\gamma}  \def\G{\Gamma}
\def\d{\delta}  \def\D{\Delta}
\def\eps{\varepsilon}
\def\l{\lambda} 
\def\k{\kappa}
\def\m{\mu}
\def\n{\nu}
\def\r{\rho}
\def\s{\sigma} \def\S{\Sigma}
\def\t{\tau}
\def\f{\varphi}  
\def\th{\theta} 
\def\om{\omega}
\def\id{\hbox{id}}
\newtheorem{thm}{Theorem}[section]
\newtheorem{lem}[thm]{Lemma}
\newtheorem{cor}[thm]{Corollary}
\newtheorem{prop}[thm]{Proposition}
\theoremstyle{definition}
\newtheorem{defin}[thm]{Definition}
\def\!{\mskip-\thinmuskip}
\def\sign{\mathop{\rm sign}}
\def\supp{\mathop{\rm supp}}
\newcommand{\ty}[1]{\mathop{\rm {#1}}}
\def\di{{\rm d}}
\def\id{{\rm id}}
\DeclareMathAlphabet{\mathpzc}{OT1}{pzc}{m}{it}
\begin{document}

\title[noncommutative torus]
{type III representations and modular spectral triples for the noncommutative torus}
\author{Francesco Fidaleo}
\address{Francesco Fidaleo\\
Dipartimento di Matematica \\
Universit\`{a} di Roma Tor Vergata\\
Via della Ricerca Scientifica 1, Roma 00133, Italy} \email{{\tt
fidaleo@mat.uniroma2.it}}
\author{Luca Suriano}
\address{Luca Suriano\\
Dipartimento di Matematica \\
Universit\`{a} di Roma Tor Vergata\\
Via della Ricerca Scientifica 1, Roma 00133, Italy} \email{{\tt
lu.suriano@gmail.com}}
\date{\today}

\keywords{Noncommutative Geometry, Noncommutative Torus, Modular Spectral Triples, Type III Representations, One Dimensional Topology, Diffeomorphisms of the Circle, Quasi Invariant Ergodic Measures}
\subjclass[2010]{58B34, 46L36, 22D25, 46L10, 46L65, 46L30, 46L87, 37E10, 43A35, 81R60}

\begin{abstract}
It is well known that for any irrational rotation number $\a$,  the noncommutative torus $\ba_\a$ must have representations $\pi$ such that the generated von Neumann algebra $\pi(\ba_\a)''$ is of type $\ty{III}$. Therefore, it could be of interest to exhibit and investigate such kind of representations, together with the associated spectral triples whose twist of the Dirac operator and the corresponding derivation arises from the Tomita modular operator. 

In the present paper, we show that this program can be carried out, at least when $\a$ is a Liouville number satisfying a faster approximation property by rationals. In this case,
we exhibit several type $\ty{II_\infty}$ and $\ty{III_\l}$, $\l\in[0,1]$, factor representations and modular spectral triples.

The method developed in the present paper can be generalised  to CCR algebras based on a locally compact abelian group equipped with a symplectic form.
\end{abstract}

\maketitle


\section{Introduction}
\label{sec1}

\noindent

The study of Connes' {\it noncommutative geometry} grew impetuously in the last decades, mostly motivated by possible applications in various fields, among which we first mention Connes' reconstruction theorem in classical geometry, see \cite{C1}. The other potential applications we would like to point out are in physics, in particular to the quantum Hall effect ({\it e.g.} \cite{BS, MP}), and to the attempt to provide a model of equations which unifies the four fundamental interactions known in nature. The reader is referred to \cite{BSvE} and \cite{C0} for the explanation of the possible role of the noncommutative geometry in solving the above mentioned fundamental problems that are still open and not even satisfactorily understood. 

According to Connes' description of the commutative manifolds, the main ingredient in noncommutative geometry is the so called {\it spectral triple}. It is expected that a spectral triple would encode the most important properties of the underlying ``noncommutative manifold'', such as algebraic, geometric, topological, metric, measure-theoretic aspects and many others. For an exhaustive explanation of the fundamental role played by the spectral triples in noncommutative geometry, the reader is referred to the seminal monograph \cite{C}, the expository paper \cite{CPR}, and the literature cited therein. 

In view of other potential applications of noncommutative geometry, very recently it was pursued the program to exhibit twisted spectral triples for which the Dirac operator and the associated derivation are deformed in a suitable way. As explained in the seminal paper \cite{CM}, it would be of interest to implement such a twist by using directly the Tomita modular theory, provided this is nontrivial. The twisted spectral triples considered in the above mentioned paper were called {\it modular}. It is also clear that such modular spectral triples can play a role in constructing new noncommutative geometries in a type $\ty{III}$ setting by emphasising 
the need to take the modular data into account. The modular spectral triples considered in \cite{CM} for the noncommutative 2-torus were constructed by using inner and bounded perturbations of the standard tracial state, therefore providing again type $\ty{II_1}$ von Neumann factors. In fact, the reason for introducing this construction was to obtain 
a curved geometry from the flat one. Concerning this aspect, the reader is referred to \cite{dC, DSW, Ma} for the investigation of the so called Ricci flow and its noncommutative counterpart.

The main goal of the present paper is the preliminary but yet essential step to provide examples of twisted spectral triples, which are
naturally constructed by using the Gel'fand-Naimark-Segal (GNS for short) representation of a $C^*$-algebra $\ga$ relative to a normalised positive linear functional ({\it i.e.} a state) with central support in the bidual, for which the generated von Neumann algebra is a  type $\ty{III}$ factor. For the reader convenience, we recall the basic properties that a modular spectral triple should fulfil.
\begin{itemize}
\item[(i)] The associated twist of the Dirac operator and the relative deformed derivation should come directly from the Tomita  modular operator which, to include type $\ty{III}$ geometries, should be unbounded and not inner ({\it i.e.} it should not come from an inner perturbation of the canonical trace).
\item[(ii)] The Dirac operator suitably deformed with the modular operator as mentioned above, should have compact resolvent at least for the so called "compact geometries".
\item[(iii)] The unbounded deformed derivation associated with such a deformed Dirac operator, acting on the set of bounded operators, should include in its domain sufficiently many elements, {\it e.g.} typically a dense $*$-algebra of the algebra describing the noncommutative manifold under consideration.
\end{itemize}
The construction of modular spectral triples satisfying the requirements listed above will be done in Section \ref {pmstc} for the noncommutative 2-torus.

The previous properties (i)-(iii) are important, since they constitute the minimal requirements for a spectral triple to encode the main metric ({\it i.e.} the Connes distance formula), and measure theoretic properties ({\it i.e.} the noncommutative counterpart of the volume form). We also mention the possible pairing with the K-theory and the equivariant K-theory with the corresponding local index formula.
For such general aspects, the reader is still referred to \cite{CPR, C} and the literature cited therein.

After accepting (i)-(iii) above as the starting point, the natural candidate to construct examples of modular spectral triples will be a state $\om$ on $\ga$ with central support $s(\om)$ in the bidual $\ga^{**}$ of the $C^*$-algebra $\ga$, generating a type $\ty{III}$ representation. Due to the fact that the modular operator will be unbounded and not inner, such a construction appears quite difficult and in fact, to the knowledge of the authors, only few examples of spectral triples satisfying the minimal requirements outlined above are present in literature, even for the noncommutative 2-torus. On the other hand, also type $\ty{III}$ representations for such a pivotal example seem not to be explicitly exhibited in literature. Thus, the other relevant step of the present paper is to partially fill this gap, that is to exhibit examples of such representations for the noncommutative 2-torus.

One of the most studied examples in noncommutative geometry is indeed the noncommutative 2-torus $\ba_\a$ (see {\it e.g.} \cite{B}), since it is a quite simple model, though highly nontrivial. It is related to the discrete Canonical Commutation Relations (CCR for short, see {\it e.g.} \cite{BR}). In fact, it can be considered as a quantum deformation of the classical $2$-torus $\bt^2$, according to the parameter $\a$ corresponding to the rotation of the angle $2\pi\a$ entering in the definition of the symplectic form involved in the construction. When $\a$ is irrational, the $C^*$-algebra $\ba_\a$ is simple and admits a unique, necessarily faithful, trace $\t$. Therefore, the associated GNS representation gives rise to a von Neumann factor $\pi_\t(\ba_\a)''$ which is isomorphic to the Connes hyperfinite type $\ty{II_1}$ factor. It is the noncommutative counterpart of the von Neumann group algebra $L^\infty(\bt^2,m\times m)$ of the $2$-torus, $m$ being the Haar measure, that is the normalised Lebesgue measure on the unit circle. In this case, as $\ba_\a$ is not a type $\ty{I}$ $C^*$-algebra, by the Glimm theorem ({\it cf.} Theorem 1 and Theorem 2 in \cite{G}), it must exhibit type $\ty{III}$ representations.

It is well known that one can associate to the canonical type $\ty{II_1}$ representation, an untwisted spectral triple based on an untwisted Dirac operator chosen in a natural way. We also mention that there are other (untwisted) spectral triples canonically associated to the various complex structures on $\bt^2$, see {\it e.g.} Section 1.4 of \cite{CM}.

Having in mind the various potential applications to type $\ty{III}$ cases, in \cite{CM} it was postulated the existence of twisted spectral triples whose Dirac operator, necessarily twisted, is used to define a twisted commutator. Such twists involve directly the modular data, and the corresponding spectral triples were called "modular". The construction was carried out by deforming the metric ({\it i.e.} the Dirac operator) through an inner and bounded perturbation, directly on the representation associated to the trace. 

However, the most common case for physical applications concerns type $\ty{III_\l}$ von Neumann algebras, $\l\in(0,1]$, describing temperature states at finite inverse temperature $\b\in\br\backslash \{0\}$, apart from the ground state $\b=\infty$ corresponding to the type $\ty{I}$ case and the infinite temperature state $\b=0$ corresponding to the type $\ty{II_1}$ case, see {\it e.g.} \cite{BR}. Recently, in \cite{BF, BF1, F} similar considerations concerning the type of the von Neumann algebras appearing in quantum statistical mechanics have been generalised to disordered systems including the spin glass models.

To conclude with these preliminary observations, we notice that, while the structure of the type $\ty{II_1}$ representation of the noncommutative torus associated to the canonical trace is widely investigated, to the knowledge of the authors the geometries arising from type $\ty{III}$ representations for which the modular data plays a primary role, are not yet satisfactorily studied. Among the works going towards this direction, we would like to mention the recent papers \cite{CPPR, CI, CM, DM, GM, GMR, GMT, Mt}.

As a preliminary step, one of the aim of the present paper is then to exhibit type $\ty{III}$ representations of the noncommutative torus $\ba_\a$, at least in the case when $\a$ is a Liouville number. More precisely, the construction is carried out for any type $\ty{II_\infty}$ and $\ty{III_\l}$, $\l\in[0,1]$ representation. Correspondingly, for such representations we construct the canonically associated spectral triples, suitably deformed by the Tomita modular operator.

Our construction is based upon the explicit knowledge of suitable diffeomorphisms $\mathpzc{f}$ of the circle with rotation number $\r(\mathpzc{f})=\a$ such those considered in \cite{M}. Indeed, by using the method described in \cite{A} to construct non regular representations for the CCR algebras, we shall consider states $\om$ such that the corresponding GNS representations generate von Neumann factors isomorphic to a crossed product, that is
$$
\pi_\om(\ba_\a)''\sim L^\infty(\bt,\di\th/2\pi)\ltimes_{\b}\bz.
$$
For such orientation preserving diffeomorphisms $\mathpzc{f}$ of the unit circle as before, $\b$ is the quasi invariant, free and ergodic action induced on functions $g$ on the unit circle by the corresponding action of $\mathpzc{f}$ on points: $\b(g)=g\circ\mathpzc{f}^{-1}$. For the infinite cases, the type of the factor is then determined by the Krieger-Araki-Woods ratio set relative to the dynamical system $(\bt,\di\th/2\pi,\mathpzc{f})$ generated by all iterates of the diffeomorphism $\mathpzc{f}$ and its inverse $\mathpzc{f}^{-1}$. 

We also note that to each of such diffeomorphisms, there corresponds a probability measure on $\bt$, which is quasi invariant and ergodic under the action of the irrational rotation $2\pi\a$, see below. On one hand, it was proved (see {\it e.g.} \cite{KW, Ke, K4, N, Mo}) the existence of plenty of measures on the unit circle which are quasi invariant and ergodic under the action of the irrational rotations. On the other hand, either the existence of such measures exhibiting any preassigned ratio set is assured by general theorems, or either such measures are explicitly exhibited without any control on the ratio set.
However, examples of such diffeomorphisms are explicitly constructed in \cite{M} for each preassigned kind of ratio set, that is of type $\ty{II_\infty}$ or $\ty{III_\l}$, $\l\in[0,1]$.

Concerning the deformed Dirac operators and the deformed commutators associated to such representations, we prove that such Dirac operators still have compact resolvent, and in addition the deformed commutators still admit a dense $*$-algebra 
in their domain, which is also closed under the entire functional calculus. This is carried out at least in the case when the involved Liouville number $\a$ satisfies a stronger approximation condition by rationals. We note that, in order to provide a real map, our definition of the deformed commutator slightly differs from the analogous one in \cite{CM}.

Our investigation also relies on some results of self containing interest concerning the asymptotic of the growth sequence $\G_n(\mathpzc{f})$ of the circle diffeomorphisms $\mathpzc{f}$ involved in the above construction. We note that, for the diffeomorphisms we shall consider, $\G_n(\mathpzc{f})$ is necessarily unbounded.  It was shown in Theorem 1 of \cite{W} that
$\G_n(\mathpzc{f})=o(n^2)$. On the other hand, in Theorem 2 of the above mentioned paper, for each diverging sequence $a_n=o(n^2)$ there exists a diffeomorphism $\mathpzc{f}$ of the circle without periodic points such that $\G_n(\mathpzc{f})\sim a_n$. Unfortunately, there is no information about the ratio set of the diffeomorphisms constructed in \cite{W}, whereas in \cite{M} no control about the growth sequence is exhibited for examples considered therein. Our result combines both controls on the ratio set and the growth sequence. More precisely, we show that all diffeomorphisms described in \cite{M}, for which we have a precise information about the ratio set, have a growth sequence for which $\G_n(\mathpzc{f})=o(n)$. Furthermore, it is possible to construct diffeomorphims with the same method as in \cite{M} corresponding to a preassigned ratio set, and with growth sequence $\G_n(\mathpzc{f})=o(\ln n)$, provided that the involved Liouville number satisfies the faster approximation condition mentioned above. We also note that our construction of non type $\ty{II_1}$ representations might be carried out also in diophantine case, by relaxing the smoothness properties of the involved diffeomorphisms, see the corresponding discussion in Section \ref{ocfl}.

To end the discussion about the noncommutative geometry relative to the 2-torus, we would like to remark that it admits more classical and well studied noncommutative geometries through the $\th$-deformations studied in \cite{CL}, see {\it e.g.} \cite{Y}. Concerning the general properties of $\th$-deformations, we also mention the relevant reference \cite{BMv}.  

In view of the potential applications of the noncommutative geometry to physics ({\it cf}. \cite{BS, BSvE, C, DM, GMT, Mt, MP}),
we note that the method developed in the present paper can be generalised to arbitrary CCR algebras based on a locally compact abelian group equipped with a symplectic form, see {\it e.g.} \cite{Z}. Concerning a well known particular case, we mention the CCR algebra describing a physical system with $n$ degrees of freedom ({\it i.e.} based on $\br^{2n}\sim\bc^n$ equipped with the canonical symplectic form), for which it is possible to construct type $\ty{III}$ representations in ``apparent contradiction'' with the celebrated Stone-von Neumann uniqueness theorem ({\it cf.} Corollary 5.2.15 in \cite{BR}).

The present paper is organised as follows. The introductory Section 2 contains the preliminary notations and definitions used in the sequel. 

In Section \ref{difa}, we recall the main properties of the diffeomorphisms constructed in \cite{M} for any Liouville number $\a$ such that $\r(\mathpzc{f})=2\a$ (the factor 2 is attached only for the matter of convenience, see below), crucial in our analysis. We also prove two results needed in the sequel, which have a self containing interest. Namely, for the diffeomorphisms 
$\mathpzc{f}$ of the unit circle $\bt$ in Proposition 2.1 of the above mentioned paper, we show that their growth sequence has a not too wild behaviour at infinity: $\G_n(\mathpzc{f})=o(n)$. In addition, if $\a$ satisfies the faster approximation property by rationals {\bf UL} introduced below, one can construct diffeomorphisms as in Proposition 2.1 of  \cite{M} for which $\G_n(\mathpzc{f})=o(\ln n)$. Therefore, for {\bf L} and {\bf UL}-numbers ({\bf L} and {\bf UL} stand for Liouville and Ultra-Liouville, respectively), we have a joint control about the ratio set and the growth sequence of the diffeomorphisms introduced in Proposition \ref{catsum}, the last ones playing a crucial role in constructing non type $\ty{II_1}$ modular spectral triples. 

Section \ref{secstat} is devoted to investigate the properties of a class of states 
$\om_\m$ on the $C^*$-algebra $\ba_{2\a}$ of the noncommutative 2-torus associated to the deformation angle $4\pi\a$. Such states are constructed by considering a probability measure $\m$ on the unit circle $\bt$. In particular, for the rotation $R$ of the angle $2\pi\a$, we show that if $\m\circ R^{n}\sim \m\circ R^{-n}$, $n\in\bz$, then
the support of $\om_\m$ in the bidual $\ba_{2\a}^{**}$ is central ({\it i.e.} the cyclic vector $\xi_\om$ of the GNS representation is also separating for $\pi_{\om_\m}(\ba_{2\a})''$), and therefore $\om_\m$ exhibits a natural modular structure. The GNS representation and the modular data for the states $\om_\m$ as above are then explicitly described in Section \ref{gnsmosc}. 

Section \ref{t3rp} is devoted to determine the type of the von Neumann algebra $\pi_{\om_\m}(\ba_{2\a})''$, which is in fact an hyperfinite factor. In order to construct the state $\om_\m$, for any irrational $\a\in(0,1/2)$ we consider an orientation preserving diffeomorphism $\mathpzc{f}\in C^\infty(\bt)$ with rotation number $\r(\mathpzc{f})=2\a$. Then it is conjugate to the rotation $4\pi\a$ through an essentially unique homeomorphism: $\mathpzc{f}=\mathpzc{h}_\mathpzc{f}\circ\mathpzc{f}\circ\mathpzc{h}_\mathpzc{f}^{-1}$. If $\a$ is diophantine, $\mathpzc{h}_\mathpzc{f}$ is also smooth ({\it cf.} \cite{Yo}), and therefore the unique invariant measure $m\circ\mathpzc{h}_\mathpzc{f}^{-1}$ is equivalent to the Lebesgue measure $m$. Otherwise, if $\a$ is a Liouville number, we can find cases for which 
$m\circ\mathpzc{h}_\mathpzc{f}^{-1}\perp m$ since
$\mathpzc{h}_\mathpzc{f}$ might not be smooth. For all these situations concerning a fixed diffeomorphism $\mathpzc{f}$ as before, the states $\om_\m$ previously described will be associated to the measure $\m:=m\circ\mathpzc{h}_\mathpzc{f}$. We then show that 
$$
\pi_{\om_\m}(\ba_\a)''\sim L^\infty(\bt,\m)\ltimes_{R_{2\a}}\bz\sim
L^\infty(\bt,\di\th/2\pi)\ltimes_{\b}\bz,
$$
which is an hyperfinite von Neumann factor whose type is determined by the ratio set $r([\m],R_{2\a})$, or equivalently by $r([m],\mathpzc{f})$, in the cases when is not of type $\ty{II_1}$. In order to get non type $\ty{II_1}$ representations, it is then enough to look at the diffeomorphisms constructed in \cite{M} for any preassigned ratio set, whose main properties are collected in Section \ref{difa}. 

In Section \ref{dirac}, we study the deformation $D^\s$ of the standard ({\it i.e.} untwisted) Dirac operator $D$ by the Tomita modular operator. We then check whether $D^\s$ has compact resolvent, and show that this happens for all diffeomorphisms $\mathpzc{f}$ of Section \ref{difa} because of the not too wild asymptotic $o(n)$ of their growth sequence $\G_n(\mathpzc{f})$. By following the lines in \cite{CM}, in Section \ref{mstc} we construct the remaining object of the spectral triple, that is the deformed commutator $\cd^\s$ associated to the deformed Dirac operator $D^\s$. Unfortunately, due to an unavoidable obstruction, we see that such a deformed derivation does not admit enough elements providing bounded operators acting on the GNS Hlibert space $\ch_\om$.

The construction of modular spectral triples arising from non type $\ty{II_1}$ representations is carried out in Section \ref{pmstc} by modifying the definition of the underlying Dirac operator. When 
$\a$ is a {\bf UL}-number, by the results in Section \ref{difa}, we can construct diffeomorphisms $\mathpzc{f}$ with a preassigned ratio set, and in addition a better control for their growth sequence ({\it i.e.} $\G_n(\mathpzc{f})=o(\ln n)$). This allows us to construct representations of preassigned type, and in addition Dirac operators with compact resolvent, and deformed commutators such that there are sufficiently many elements in their domains.

The paper is complemented by two further sections. For the convenience of the reader, Section \ref{ocfl} collects some open interesting problems, not only directly connected to noncommutative geometry. Section \ref{ppa} contains the explicit computation of the (perhaps expected) fact that the dense $*$-algebra for which the deformed commutator provides bounded operators, can be chosen to be stable under the entire functional calculus.

\section{Preliminaries}
\label{sec2}

\noindent

In the present section, we collect all notations and definitions which are useful in the paper.

\subsection{Preliminary notations}

Let $E$ be a normed space. We simply denote by $\|\,{\bf\cdot}\,\|$ its norm whenever no confusion arises. In particular, $\|x\|\equiv\|x\|_\ch$ will be the Hilbertian norm of the element $x\in\ch$ in the Hilbert space $\ch$. For a continuous or measurable function $f$ defined on the locally compact space $X$ equipped with the Radon measure $\m$, $\|f\|\equiv\|f\|_\infty$ will denote the ``sup'' or ``esssup'' norm of the function $f$, respectively. In addition, for an arbitrary set $X$ ({\it i.e.} equipped with the discrete topology), $\cb(X)$ will denote the $C^*$-algebra consisting of all bounded functions defined on it, equipped with the natural algebraic operations and norm $\|f\|\equiv\|f\|_\infty:=\sup_{x\in X}|f(x)|$.

Let $X$ be a locally compact abelian group together with its dual group $\widehat{X}$ ({\it i.e.} the group of continuous characters equipped with pointwise multiplication, reciprocal and the topology of the uniform convergence on the compact subsets of $X$), with the Haar measures $\di x, \di\chi$, respectively. The Fourier transform and anti-transform of a function $f\in L^1(X, \di x)$ are defined as
$$
\hat f(\chi):=\int_X f(x)\overline{\chi(x)}\di x,\quad \check f(\chi):=\int_X f(x)\chi(x)\di x.
$$
By the Riemann-Lebesgue lemma, if $a,b\in C^*(X)$, the enveloping $C^*$-algebra of the $L^1$-convolution algebra of $X$, their convolution is a well defined element $a*b\in C^*(X)$, whose corresponding Fourier transform and anti-transform are functions in $C_o(\widehat{X})$, the space of all continuous functions on $\widehat{X}$ vanishing at infinity,
satisfying
$$
\widehat{a*b}(\chi)=\hat a(\chi)\hat b(\chi),\quad \widecheck{a*b}(\chi)=\check a(\chi)\check b(\chi).
$$
Let $\bt:=\{z\in\bc\mid |z|=1\}$ be the abelian group consisting of the unit circle. It is well known that the dual topological group $\widehat{\bt}$ is isomorphic to the discrete group $\bz$. The corresponding Haar measures are $\di m=\frac{\di z}{2\pi\imath z}=\frac{\di \th}{2\pi}$ for $z=e^{\imath\th}$ on the circle, and the counting measure on $\bz$, respectively.

Let $(X,\cb)$ be a measurable space, together with two $\s$-additive measures $\m$ and $\n$ on the $\s$-algebra $\cb$, which are supposed to be always $\s$-finite and positive. If 
$\m$ dominates $\n$ in the sense of measures ({\it e.g.} Section I.4 of \cite{RS}), we write $\n\preceq\m$. If $\n\preceq\m$ and $\m\preceq\n$, then $\m$ and $\n$ are equivalent as measures, and we write $\m\sim\n$.

For an essentially bounded measurable function $f$ on the measure space $(X,\cb,\n)$, the multiplication operator $M_f$ is the closed operator acting on $L^2(X,\cb,\n)$ with domain
$$
\cd_{M_f}:=\bigg\{g\in L^2(X,\cb,\n)\mid \int_X\big|f(x)g(x)\big|^2\di\n(x)<+\infty\bigg\},
$$
defined for $g\in\cd_{M_f}$ as
$$
(M_fg)(x):=f(x)g(x),\quad x\in X.
$$

\subsection{On measurable automorphisms}
\label{5dfm}

To simplify notation, we will denote by $R$ the rotation by the angle $2\pi\a$ ({\it i.e.} $R\equiv R_{\a}$) whenever the parameter $\a$ is fixed. Set $\m_n:=\m\circ R^{-n}$ for each $n\in\bz$. 

More generally, if $\mathpzc{f}:Y\to Y$ is an invertible map on the set $Y$:
\begin{itemize} 
\item $\mathpzc{f}^0:=\id_Y$, and its inverse is denoted by $\mathpzc{f}^{-1}$;
\item for the $n$-times composition, $\mathpzc{f}^n:=\underbrace{\mathpzc{f}\circ\cdots\circ \mathpzc{f}}_{n-\text{times}}$;
\item for the $n$-times composition of the inverse, $\mathpzc{f}^{-n}:=\underbrace{\mathpzc{f}^{-1}\circ\cdots\circ \mathpzc{f}^{-1}}_{n-\text{times}}$.
\end{itemize}
Thus, $\mathpzc{f}^{n}$ is meaningful for any $n\in\bz$ with the above convention. 

Recall that an automorphism $\mathpzc{f}$ of a standard measure space $(X,\cb,\n)$ is a bi-measurable bijection between two conull subsets of $X$ such that the pull-back measure satisfies $\n\circ\mathpzc{f}\sim\n$. The automorphism $\mathpzc{f}$ is called {\it ergodic} if any $\mathpzc{f}$-invariant measurable subset of X is either null or conull. When the measure space $(X,\cb,\n)$ is fixed, we will just speak about an ergodic automorphism $\mathpzc{f}$. If it is not of type $\ty{I}$ ({\it i.e.} $\n$ is not purely atomic), any such an automorphism is further classified by the Krieger-Araki-Woods {\it ratio set} ({\it cf.} \cite{K2}), denoted by $r(\mathpzc{f})\subset[0,+\infty)$ when the measure class $[\n]$ determined by $\n$ is fixed, or more precisely by $r([\n],\mathpzc{f})$. 

Consider a non type $\ty{I}$ automorphism $\mathpzc{f}$ as above. It is said to be of 
\begin{itemize}
\item[(ii)] type $\ty{II}$, if $r(\mathpzc{f})=\{1\}$.
\end{itemize}
Notice that this is the case if and only if $\n\sim\n_0$ for a (essentially unique) $\s$-finite invariant measure $\n_0$. It is said to be of
\begin{itemize}
\item[(ii${}_1$)] type $\ty{II_1}$, if $\n_0$ is finite,
\item[(ii${}_\infty$)] type $\ty{II_\infty}$, if $\n_0$ is infinite.
\end{itemize}
In the remaining cases, it is said to be of 
\begin{itemize}
\item[(iii)] type $\ty{III}$, if it is not of type $\ty{II}$.
\end{itemize}
Furthermore, it is said to be of
\begin{itemize}
\item[(iii${}_0$)] type $\ty{III_0}$, if $r(\mathpzc{f})=\{0,1\}$,
\item[(iii${}_\l$)] type $\ty{III_\l}$ for some $\l\in(0,1)$, if $r(\mathpzc{f})=\l^\bz\cup\{0\}$,
\item[(iii${}_1$)] type $\ty{III_1}$, if $r(\mathpzc{f})=[0,+\infty)$.
\end{itemize}
Consider the natural action of $\bz$ on $L^\infty(X,\n)$ generated by the automorphism $\mathpzc{f}$: $\b(g):=g\circ\mathpzc{f}^{-1}$ for $g\in L^\infty(X,\n)$.
For the crossed product $L^\infty(X,\n)\ltimes_{\b}\bz$, it is well known that ${\rm S}\left(L^\infty(X,\n)\ltimes_{\b}\bz\right)=r(\mathpzc{f})$, where ${\rm S}(M)$ is the Connes spectral invariant of the factor $M$, see {\it e.g.} \cite{RMPR, K1, K2a, K3}. 

We now specialise the matter to the unit circle. In this case, suppose that for some real number $\a\in(0,1)$, $\mathpzc{f}=\mathpzc{h}\circ R_\a\circ \mathpzc{h}^{-1}$ for some homeomorphism $\mathpzc{h}$ of the unit circle. Then $\r(\mathpzc{f})=\a$, where $\r(\mathpzc{f})$ is the {\it rotation number} ({\it e.g.} Section 11 of \cite{KH}) of the homeomorphism $\mathpzc{f}$. Conversely, for $\a$ irrational, suppose that $\mathpzc{f}$ is an orientation preserving $C^1$-diffeomorphism with bounded variation derivative, such that $\r(\mathpzc{f})=\a$. Then 
\begin{itemize}
\item[{\bf (A)}] $\mathpzc{f}$ is topologically conjugate to the rotation $R_\a$, that is
$$
\mathpzc{f}=\mathpzc{h}_\mathpzc{f}\circ R_\a\circ \mathpzc{h}_\mathpzc{f}^{-1},
$$
for a unique homeomorphism $\mathpzc{h}_\mathpzc{f}$ of $\bt$ with $\mathpzc{h}_\mathpzc{f}(1)=1$.
\end{itemize}
If $\a$ is diophantine and $\mathpzc{f}$ is a sufficiently smooth diffeomorphism, $\mathpzc{h}_\mathpzc{f}$ is indeed smooth, see {\it e.g.} \cite{Yo}.

The following fact is immediately true.
\begin{lem}
\label{equ}
Suppose that a homeomorphism $\mathpzc{f}$ of the circle $\bt$ is conjugate to a rotation $R_\a$ according to {\rm{({\bf A})}}.  
Then the homeomorphism of the circle defined as
$$
\mathpzc{g}:=\mathpzc{h}_\mathpzc{f}\circ R_{\a/2}\circ\mathpzc{h}_\mathpzc{f}^{-1}
$$
is a ``square root'' of $\mathpzc{f}$: $\mathpzc{g}\circ \mathpzc{g}=\mathpzc{f}$.
\end{lem}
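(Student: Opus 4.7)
The proof is essentially a one-line computation, so my plan is to organise it as a direct verification rather than a multi-step argument. The key observation is that the rotations of the circle form a one-parameter group, so $R_{\a/2}\circ R_{\a/2}=R_\a$. Composing $\mathpzc{g}$ with itself, the middle factor $\mathpzc{h}_\mathpzc{f}^{-1}\circ\mathpzc{h}_\mathpzc{f}$ collapses to the identity, leaving $\mathpzc{h}_\mathpzc{f}\circ R_{\a/2}\circ R_{\a/2}\circ\mathpzc{h}_\mathpzc{f}^{-1}=\mathpzc{h}_\mathpzc{f}\circ R_\a\circ \mathpzc{h}_\mathpzc{f}^{-1}$, which by hypothesis {\bf (A)} equals $\mathpzc{f}$.

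Concretely, I would write:
\begin{align*}
\mathpzc{g}\circ\mathpzc{g}
&=\bigl(\mathpzc{h}_\mathpzc{f}\circ R_{\a/2}\circ\mathpzc{h}_\mathpzc{f}^{-1}\bigr)\circ\bigl(\mathpzc{h}_\mathpzc{f}\circ R_{\a/2}\circ\mathpzc{h}_\mathpzc{f}^{-1}\bigr)\\
&=\mathpzc{h}_\mathpzc{f}\circ R_{\a/2}\circ R_{\a/2}\circ\mathpzc{h}_\mathpzc{f}^{-1}
=\mathpzc{h}_\mathpzc{f}\circ R_{\a}\circ\mathpzc{h}_\mathpzc{f}^{-1}
=\mathpzc{f}.
\end{align*}
The only thing worth remarking is that $\mathpzc{g}$ is genuinely a homeomorphism of $\bt$, which is automatic since it is a composition of three homeomorphisms. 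There is essentially no obstacle here; the lemma is really a formal consequence of hypothesis {\bf (A)} and the group law for rotations, recorded presumably because it will be invoked later to pass between a diffeomorphism of rotation number $2\a$ and its square root of rotation number $\a$, as suggested by the factor 2 convention introduced just before the statement.
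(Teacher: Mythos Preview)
Your proof is correct and matches the paper's treatment: the paper simply declares the lemma ``immediately true'' and gives no argument, and your one-line verification via the group law for rotations and cancellation of $\mathpzc{h}_\mathpzc{f}^{-1}\circ\mathpzc{h}_\mathpzc{f}$ is exactly the immediate computation the authors have in mind.
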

Given an irrational number which is not diophantine, it is possible to prove that it is a Liouville number, that is it satisfies the condition {\bf (L)} below.
\begin{itemize}
\item[{\bf (L)}] A {\it Liouville number} $\a\in(0,1)$ is a real number such that for each $N\in\bn$ the inequality 
\begin{equation*}
\left|\a - \frac{p}{q}\right|< \frac{1}{q^N}
\end{equation*}
has an infinite number of solutions for $p,q\in\bn$ with $(p,q)=1$.
\item[{\bf (UL)}] Among Liouville numbers, we also consider those such that, for each $\l>1$ and $N\in\bn$, the inequality
\begin{equation*}
\left|\a - \frac{p}{q}\right|<\frac{1}{\l^{q^N}}
\end{equation*}
again admits infinite number of solutions for $p,q\in\bn$ with $(p,q)=1$.
\end{itemize}
For details and proofs, we refer the reader to \cite{KH, Ki} and the reference cited therein. 

Let $\frac{\di\,}{\di\th}=\imath z\frac{\di\,}{\di z}$ be the derivative w.r.t. the angle $\th$. If the ``natural'' variable of $\mathpzc{f}$ is $z$, then (with a little abuse of notation) we simply denote by $D$ the derivative w.r.t. its argument ({\it i.e.} $D\equiv\frac{\di\,}{\di z}$). We refer to a $C^\infty$-diffeomorphism $\mathpzc{f}:\bt\rightarrow\bt$ simply as a ``diffeomorphism'' if is not otherwise specified.

Given a $C^1$-diffeomorphism $\mathpzc{f}$ of the unit circle $\bt$, the {\it growth sequence} $\{\G_n(\mathpzc{f})\mid n\in\bn\}$ is defined as
$$
\G_n(\mathpzc{f}):=\|D\mathpzc{f}^n\|_\infty\vee\|D\mathpzc{f}^{-n}\|_\infty,\quad n\in\bn.
$$
We set
$$
\g(\mathpzc{f}):=\sup_{n\in\bn}\G_n(\mathpzc{f})\in\br_+\cup\{+\infty\}.
$$
If $\mathpzc{f}$ is $C^1$-conjugate to the rotation by some angle $2\pi\a$, then $\g(\mathpzc{f})<+\infty$. As pointed out before, this happens for $C^\infty$-diffeomorphisms $\mathpzc{f}$ with 
$\r(\mathpzc{f})=\a$ whenever $\a$ is diophantine.

Suppose that $\b\in(0,1)\backslash\bq$ and $\mathpzc{f}$ is an orientation preserving diffeomorphism with $\r(\mathpzc{f})=\b$. In this case, the Denjoy Theorem asserts that there exists a unique homeomorphism $\mathpzc{h}_\mathpzc{f}$ of the unit circle such that $\mathpzc{h}_\mathpzc{f}(1)=1$ satisfying {\bf (A)} for the rotation $R_\b$. Then
\begin{equation}
\label{uinma}
\m_\mathpzc{f}:=(\mathpzc{h}_\mathpzc{f})^*m=m\circ \mathpzc{h}_\mathpzc{f}^{-1},
\end{equation}
is the unique invariant measure, which is ergodic for the natural action of $\mathpzc{f}$ on $\bt$. For a diophantine number $\b$, $\mathpzc{h}_\mathpzc{f}$ is indeed smooth and thus $\m_\mathpzc{f}\sim m$. For a Liouville number $\b$, things are quite different. There are diffeomorphisms as above for which the unique invariant measure $(\mathpzc{h}_\mathpzc{f})^*m$ is singular w.r.t. the Haar measure $m$: $(\mathpzc{h}_\mathpzc{f})^*m\perp m$. In order to exhibit type $\ty{III}$ representations of the noncommutative torus $\ba_{2\a}$,
we will actually look at such diffeomorphisms,  where $\b=2\a$. The use of the factor 2 is pure matter of convenience, and will be clarified in the sequel. 

To deal with Dirac operators on $\ba_{2\a}$, we must also look at the Haar measure $m$ on $\bt$. In fact, we begin by noticing that the sequence of the measures $\{m\circ\mathpzc{f}^n\mid n\in\bz\}$ are all quasi equivalent, as $\mathpzc{f}$ is a diffeomorphism. We then compute the corresponding Radon-Nikodym derivatives in the following
\begin{lem}
For any $C^1$-diffeomorphism of the circle $\mathpzc{f}$, we have
\begin{equation}
\label{rcanz}
\frac{\di m\circ\mathpzc{f}^{-n}}{\di m}(z)=\frac{z(D\mathpzc{f}^{-n})(z)}{\mathpzc{f}^{-n}(z)},\quad n\in\bz.
\end{equation}
\end{lem}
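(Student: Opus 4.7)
The plan is a direct computation via change of variables in the integral representation of $m\circ\mathpzc{f}^{-n}$, exploiting the explicit form $\di m=\di z/(2\pi\imath z)$ of the normalised Haar measure on $\bt$. Set $\mathpzc{g}:=\mathpzc{f}^{-n}$, which is itself a $C^1$-diffeomorphism of $\bt$ for every $n\in\bz$. In accordance with the convention implicit in $(\mathpzc{h}_\mathpzc{f})^*m=m\circ\mathpzc{h}_\mathpzc{f}^{-1}$ of \eqref{uinma}, one should read the measure $m\circ\mathpzc{g}$ as the Borel measure defined by $(m\circ\mathpzc{g})(A):=m(\mathpzc{g}(A))$ for every Borel subset $A\subset\bt$.

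For such an $A$, I would substitute $w=\mathpzc{g}(z)$ (so that $\di w=D\mathpzc{g}(z)\,\di z$) and compute
\begin{align*}
(m\circ\mathpzc{g})(A)=m\big(\mathpzc{g}(A)\big)
&=\int_{\mathpzc{g}(A)}\frac{\di w}{2\pi\imath w}
=\int_A\frac{D\mathpzc{g}(z)}{2\pi\imath\,\mathpzc{g}(z)}\,\di z\\
&=\int_A\frac{z\,D\mathpzc{g}(z)}{\mathpzc{g}(z)}\,\di m(z).
\end{align*}
Since $A$ was arbitrary, this chain of equalities identifies the Radon--Nikodym derivative, and reinserting $\mathpzc{g}=\mathpzc{f}^{-n}$ yields the claimed formula for every $n\in\bz$. (An alternative bookkeeping is to first establish the case $n=1$ this way and then bootstrap to general $n\in\bz$ by the multiplicative chain rule for Radon--Nikodym derivatives along the composition $\mathpzc{f}^{-n}=\mathpzc{f}^{-1}\circ\cdots\circ\mathpzc{f}^{-1}$.)

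The one sanity check worth keeping in mind is that the proposed density is real and positive, so that it is a legitimate Radon--Nikodym derivative. Writing $\mathpzc{f}^{-n}(e^{\imath\th})=e^{\imath\psi_n(\th)}$ for a $C^1$-lift $\psi_n$, a short calculation using $D=(\imath z)^{-1}\frac{\di\,}{\di\th}$ reduces the right-hand side to $\psi_n'(\th)$, which is strictly positive whenever $\mathpzc{f}$ is orientation-preserving. No substantial obstacle is expected here: the whole argument is a routine application of the change-of-variables formula, the only point deserving attention being the matching of the ``$m\circ T$'' convention with the compositions used in \eqref{uinma} and in Section \ref{5dfm}.
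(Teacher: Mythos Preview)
Your proof is correct and follows essentially the same approach as the paper's: a direct change of variables in the integral representation of the Haar measure $\di m=\di z/(2\pi\imath z)$. The paper phrases the computation against continuous test functions $h\in C(\bt)$ and sets $\mathpzc{g}=\mathpzc{f}^{n}$ (so that $m\circ\mathpzc{g}^{-1}=m\circ\mathpzc{f}^{-n}$), whereas you work with Borel sets and set $\mathpzc{g}=\mathpzc{f}^{-n}$ directly; these are cosmetic differences, and your additional remark on positivity via the angular lift is a useful sanity check but not needed for the argument.
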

\begin{proof}
The proof relies on an elementary change of variable. In fact, fix any function $h\in C(\bt)$, and set $\mathpzc{g}=\mathpzc{f}^n$. We then have
\begin{align*}
&\int_{\bt}\di m\circ\mathpzc{g}^{-1}(z)h(z)=\int_{\bt}\di m(z)h(\mathpzc{g}(z))=\int_{\bt}\frac{\di z}{2 \pi\imath z}h(\mathpzc{g}(z))\\
=&\int_{\bt}\frac{\di w}{2 \pi\imath w}\frac{wh(w)}{\mathpzc{g}^{-1}(w)(D\mathpzc{g})(\mathpzc{g}^{-1}(w))}
=\int_{\bt}\frac{\di z}{2 \pi\imath z}\frac{z(D\mathpzc{g}^{-1})(z)}{\mathpzc{g}^{-1}(z)}h(z)\\
=&\int_{\bt}\frac{\di z}{2 \pi\imath z}\frac{z(D\mathpzc{f}^{-n})(z)}{\mathpzc{f}^{-n}(z)}h(z)\,.
\end{align*}
\end{proof}
We also notice the crucial fact that the Lebesgue measure $m$ is ergodic under the action of the $C^\infty$-diffeomorphisms $\mathpzc{f}$ considered in the present paper, see {\it e.g.} Theorem 12.7.2 in \cite{KH}.

\subsection{The noncommutative 2-torus}

For a fixed $\a\in\br$, the {\it noncommutative torus} $\ba_{2\a}$ associated to the rotation by the angle $4\pi\a$, is the universal unital $C^*$-algebra with identity $I$ generated by the commutation relations involving two noncommutative unitary indeterminates $U,V$:
\begin{equation}
\label{ccrba}
\begin{split}
&UU^*=U^*U=I=VV^*=V^*V,\\
&UV=e^{4\pi\imath\a}VU.
\end{split}
\end{equation}

We express $\ba_{2\a}$ in the so called {\it Weyl form}. Let ${\bf a}:=(m,n) \in \bz^2$ be a double sequence of integers, and define
$$
W({\bf a}):=e^{-2\pi\imath\a mn}U^mV^n,\quad {\bf a}\in\bz^2.
$$
Obviously, $W({\bf 0})=I$, and the commutation relations \eqref{ccrba} become
\begin{equation}
\label{ccrba1}
\begin{split}
&W({\bf a})W({\bf A})=W({\bf a}+{\bf A})e^{2\pi\imath\a\s({\bf a},{\bf A})},\\
&W({\bf a})^*=W(-{\bf a}),\quad {\bf a}, {\bf A}\in\bz^2,
\end{split}
\end{equation}
where the symplectic form $\s$ is defined by
$$
\s({\bf a},{\bf A}):=(mN-Mn),\quad {\bf a}=(m,n), \,{\bf A}=(M,N) \in\bz^2.
$$
We now fix a function $f\in\cb(\bz^2)$, which we may assume to have finite support. The element $W(f)\in\ba_{2\a}$ is then defined as
$$
W(f):=\sum_{{\bf a}\in\bz^2}f({\bf a})W({\bf a}).
$$
The set $\{W(f)\in\ba_{2\a}\mid f\in\cb(\bz^2)\,\text{with finite support}\}$ provides a dense $*$-algebra of $\ba_{2\a}$. 

For $\a$ irrational, we recall that $\ba_{2\a}$ is simple and has a necessarily unique faithful trace $\t$ given by
$$
\t(W(f)):=f({\bf 0}),\quad W(f)\in\ba_{2\a}.
$$
Conversely, by Remark 1.7 of \cite{B}, any element $A\in\ba_{2\a}$ is uniquely determined by the corresponding Fourier coefficients
\begin{equation}
\label{zfua}
f({\bf a}):=\t(W(-{\bf a})A),\quad {\bf a}\in\bz^2.
\end{equation}
The relations \eqref{ccrba1} transfer to the generators $W(f)$ as follows:
$$
W(f)^*=W(f^\star),\quad W(f)W(g)=W(f*_{2\a} g),
$$
where
\begin{align*}
&f^\star({\bf a}):=\overline{f(-{\bf a})},\\
&(f*_{2\a} g)({\bf a})=\sum_{{\bf A}\in\bz^2}f({\bf A})g({\bf a}-{\bf A})e^{-2\pi\imath\a\s({\bf a},{\bf A})}.
\end{align*}
It is then easily seen that, for each fixed $n\in\bz$, $f^{(n)}(m):=f(m,n)$ defines a sequence whose Fourier anti-transform 
$$
\widecheck{f^{(n)}}(z):=\sum_{m\in\bz}f(m,n) z^m
$$
provides a continuous function $\widecheck{f^{(n)}}\in C(\bt)$.

From now on, we tacitly assume that $\a\in(0,1/2)$ is always irrational. Moreover, in order to get type $\ty{III}$ representations and the corresponding modular spectral triples, we shall restrict the matter to Liouville numbers {\bf (L)}, possibly satisfying the stronger approximation condition {\bf (UL)} when necessary, even if the general part of the present analysis works for general irrational numbers 
$\a\in\br$.

\subsection{Modular spectral triples}

Concerning the usual terminology, the main concepts and results in operator algebras theory such as Tomita modular theory and so forth, the reader is referred to \cite{BR, SZ, T1} and the reference cited therein. 

For the convenience of the reader, we report the following well known fact whose proof can be found in pag. 15 of \cite{NSZ}.
\begin{prop}
\label{ccbl}
Let $\f\in\ga^*_+$ be a positive linear functional on the $C^*$-algebra $\ga$, together with its GNS representation $(\ch_\f,\pi_\f,\xi_\f)$. 

Then $\xi_\f$ is cyclic for the commutant $\pi_\f(\ga)'$ if and only if the support $s(\f)$ in $\ga^{**}$ is central:  $s(\f)\in Z(\ga^{**})$.
\end{prop}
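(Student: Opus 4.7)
The plan is to reduce both conditions appearing in the statement to the single equality $s(\f)=z(\f)$, where $z(\f)\in Z(\ga^{**})$ denotes the central support of the GNS representation, and then to check the equivalence of this equality with centrality of $s(\f)$.

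First, I would recall that $\pi_\f$ extends uniquely to a normal $*$-homomorphism $\tilde{\pi}_\f:\ga^{**}\to B(\ch_\f)$ whose image is $\pi_\f(\ga)''$ and whose kernel has the form $(1-z(\f))\ga^{**}$ for a central projection $z(\f)\in Z(\ga^{**})$; consequently, $\tilde{\pi}_\f$ restricts to a $*$-isomorphism of $z(\f)\ga^{**}$ onto $\pi_\f(\ga)''$. Since $\tilde{\pi}_\f(1-z(\f))=0$ implies $\tilde{\f}(1-z(\f))=\|\tilde{\pi}_\f(1-z(\f))\xi_\f\|^2=0$, the minimality of $s(\f)$ gives $s(\f)\leq z(\f)$. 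The crucial step is the identification
\[
\tilde{\pi}_\f(s(\f))=[\pi_\f(\ga)'\xi_\f],
\]
namely the orthogonal projection onto $\overline{\pi_\f(\ga)'\xi_\f}$. This is obtained by remarking that $[\pi_\f(\ga)'\xi_\f]$ is, by the standard characterization, the support of the vector state $\omega_{\xi_\f}$ on $\pi_\f(\ga)''$, while the normal isomorphism $\tilde{\pi}_\f\!\!\restriction_{z(\f)\ga^{**}}$ transports the support of $\tilde{\f}$, that is $s(\f)$, to the support of $\omega_{\xi_\f}=\tilde{\f}\circ\tilde{\pi}_\f^{-1}$.

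With this identification at hand, $\xi_\f$ is cyclic for $\pi_\f(\ga)'$ iff $[\pi_\f(\ga)'\xi_\f]=1_{\ch_\f}=\tilde{\pi}_\f(z(\f))$, which by injectivity of $\tilde{\pi}_\f$ on $z(\f)\ga^{**}$ is equivalent to $s(\f)=z(\f)$. If $s(\f)=z(\f)$, then $s(\f)\in Z(\ga^{**})$ trivially. Conversely, assume $s(\f)$ is central: then $\tilde{\pi}_\f(1-s(\f))$ is central in $\pi_\f(\ga)''$ and annihilates $\xi_\f$ because $\|\tilde{\pi}_\f(1-s(\f))\xi_\f\|^2=\tilde{\f}(1-s(\f))=0$; since it commutes with $\pi_\f(\ga)$ it vanishes on the cyclic subspace $\pi_\f(\ga)\xi_\f$, hence $\tilde{\pi}_\f(1-s(\f))=0$, which gives $s(\f)\geq z(\f)$ and, combined with the inequality $s(\f)\leq z(\f)$ noted above, yields $s(\f)=z(\f)$. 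The main obstacle is precisely the identification of $\tilde{\pi}_\f(s(\f))$ with $[\pi_\f(\ga)'\xi_\f]$; once this is settled the rest is routine manipulation of supports and central projections in $\ga^{**}$.
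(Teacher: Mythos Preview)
Your argument is correct. The identification $\tilde\pi_\f(s(\f))=[\pi_\f(\ga)'\xi_\f]$ via the normal isomorphism $\tilde\pi_\f\!\restriction_{z(\f)\ga^{**}}$ is the standard route, and your reduction of both directions to the equality $s(\f)=z(\f)$ is clean and complete.

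Note, however, that the paper does not actually prove this proposition: it merely records it as a well known fact and refers the reader to page~15 of \cite{NSZ}. So there is nothing to compare against in the paper itself; your write-up supplies precisely the kind of self-contained argument the paper chose to omit.
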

For the $C^*$-algebra $\ga$, we denote with $\cs(\ga)\subset\ga^*_+$ the set of the states, that is all positive normalised functionals on $\ga$.

Let $\om\in\cs(\ga)$ with $s(\om)\in Z(\ga^{**})$. Then
$$
\pi_\om(a)\xi_\om\in\ch_\om\mapsto\pi_\om(a^*)\xi_\om\in\ch_\om
$$ 
is well defined on the dense subset $\big\{\pi_\om(a)\xi_\om\mid a\in\ga\big\}\subset\ch_\om$, and closable. The polar decomposition of its closure $S_\om$, named the Tomita involution, is usually written as $S_\om=J_\om\D_\om^{1/2}$, where $J_\om$ and $\D_\om$ are the Tomita conjugation and modular operator, respectively. We shall drop the subscripts whenever the state $\om$ is fixed once for all.

We introduce the definition of (even) modular spectral triple that we will use in the sequel, which is slightly different from the analogous one in \cite{CM}. For general aspects and some natural applications of spectral triples, we refer the reader to \cite{CPR, C, V} and the references cited therein.
\begin{defin}
\label{fmst}
A {\it modular spectral triple} associated to a unital $C^*$-algebra $\ga$ is a triplet $(\om,\ca,L)$, where $\om\in\cs(\ga)$, $\ca\subset\ga$ is a dense $*$-algebra and $L$ is a densely defined closed operator acting on $\ch_\om$, satisfying the following conditions:
\begin{itemize}
\item[(i)] $s(\om)\in Z(\ga^{**})$;
\item[(ii)] the deformed Dirac operator 
$$
D^\s:=\begin{pmatrix} 
	 0 &\D_\om^{-1}L\\
	L^*\D_\om^{-1}& 0\\
     \end{pmatrix} 
$$
acting on $\ch_\om\oplus\ch_\om$ uniquely defines a selfadjoint operator with compact resolvent: $D^\s$ is densely defined essentially selfadjoint 
with $\big(1+(\overline{D^\s})^2\big)^{-1/2}$ compact;
\item[(iii)] for each $a\in\ca$, the deformed commutator
$$
\cd_L^\s\big(\pi_\om(a)\big):=\imath\begin{pmatrix} 
	0&\D_\om^{-1}[L,\pi_\om(a)]\\
	[L^*,\pi_\om(a)]\D_\om^{-1}& 0\\
     \end{pmatrix}
$$
uniquely defines a bounded operator: $\overline{\cd_{\cd_L^\s(\pi_\om(a))}}=\ch_\om\oplus\ch_\om$ and 
$$
\sup\big\{\big\|\cd_L^\s\big(\pi_\om(a)\big)\xi\big\|\mid \xi\in\cd_{\cd_L^\s(\pi_\om(a))},\,\,\|\xi\|\leq1\big\}<+\infty;
$$
\item[(iv)] $\pi_\om(\ca)\cd_L\subset \cd_L$, $\pi_\om(\ca)\cd_{L^*}\subset \cd_{L^*}$.
\end{itemize}
\end{defin}
\vskip.5cm
The closure of the Dirac operator in (ii) will be also denoted as $D^\s$ with an abuse of notation.

Concerning the question of whether a spectral triple determines a {\it Fredholm module} ({\it e.g.} \cite{C}), it is customary to add the additional condition (iv) in the definition of spectral triple. We would like to point out that, although the situation is well clarified in the untwisted case in \cite{FMR}, in the situation treated in the present paper it is still unclear what should be the conditions to obtain a Fredholm module from a modular spectral triple fulfilling the properties in Definition \ref{fmst}. Indeed, it might be necessary to use a definition that is based on some suitable twisting. We do not want to pursue to much these relevant aspects, and postpone a more detailed analysis somewhere else.

We end by pointing out that, even if the terminology ``modular spectral triple" is used in slightly different contexts ({\it e.g.} \cite{CPPR}), we still call ``modular" a spectral triple fulfilling the properties in Definition \ref{fmst} in accordance to the analogous one in Section 1 of \cite{CM}, to which our definition is widely close. The reader is referred to Section \ref{mstc} for the explanation of the appearance of condition (iii) above, which is slightly different from the analogous one in the above mentioned paper \cite{CM}.

\section{On the diffeomorphisms on the circle}
\label{difa}

\noindent
In the present section, we report some useful results listed in Proposition 2.1 of \cite{M} concerning diffeomorphisms of the unit circle topologically conjugate to a rotation by $2\pi$-times a Liouville number $\a$, that is a number satisfying condition {\bf(L)}. In the previously mentioned paper, the diffeomorphisms $\mathpzc{f}$ are constructed in order to have a control on the ratio set $r([m],\mathpzc{f})$, but without any control on the growth sequence, the latter being crucial in exhibiting modular spectral triples associated with non type $\ty{II_1}$ representations. On the other hand, the asymptotic of the growth sequence is studied in several papers, and in particular in the case of sufficiently smooth diffeomorphisms without periodic points, in \cite{W}. Contrary to \cite{M}, no control about the ratio set is investigated therein. For such diffeomorphisms, we provide also a control on the growth sequence. We then extend the topic to diffeomorphisms 
$\mathpzc{f}$ with $\r(\mathpzc{f})=\a$, such that $\a$ satisfies the condition {\bf(UL)}, by exhibiting examples with a better behaviour of their growth sequence.

By using the terminology of \cite{M}, to which we refer the reader for further details, we also collect some of the points listed in Proposition 2.1 in the above paper (see also Lemma 4.1 in \cite{FS}), which are crucial for the purpose of our analysis, in the following
\begin{prop}
\label{catsum}
Let $\a\in(0,1)$ satisfy {\rm{\bf(L)}}, and consider any type  $\ty{II_\infty}$, or $\ty{III_\l}$, $\l\in[0,1]$ diffeomorphism of $\bt$, together with the corresponding ratio set ${\rm F}\subset[0,+\infty)$. 

Then there exist sequences of rational numbers $\a_n=\frac{p_n}{q_n}\in(0,1)$ with $(p_n,q_n)=1$ for each $n\in\bn$, and diffeomorphisms of the circle $h_n$ such that, by setting $H_0:=\id_\bt$, $\mathpzc{f}_0:=R_{\a_1}$, and defining recursively 
$$
H_n:=h_1\cdots h_n,\quad \mathpzc{f}_n:=H_n\circ R_{\a_{n+1}}\circ H_n^{-1},\,\,n\in\bn,
$$
\begin{itemize}
\item[(i)] $\a_n\to\a$.
\item[(ii)] $R_{\a_{n}}\circ h_n= h_n\circ R_{\a_{n}}$, $n\in\bn$.
\item[(iii)] $H_n$ converges uniformly to a homemorphism $H$.
\item[(iv)] $\mathpzc{f}_n$ converges in the $C^{\infty}$-topology to a diffeomorphism $\mathpzc{f}$ satisfying 
$$
\mathpzc{f}=H\circ R_{\a}\circ H^{-1},\,\, (\text{hence}\,\, \r(\mathpzc{f})=\a).
$$
\item[(v)] In the type $\ty{II_\infty}$ case, the Lebesgue measure $m$ on $\bt$ is equivalent to an infinite measure which is invariant under the action of $\mathpzc{f}$, and thus, consequently, $r(\mathpzc{f})=\{1\}\equiv{\rm F}$. 
In the type $\ty{III}$ case, $r(\mathpzc{f})={\rm F}$.
\item[(vi)] $\G_n(\mathpzc{f})=o(n)$.
\item[(vii)] If in addition $\a$ satisfies {\rm{\bf(UL)}}, then the sequences $\{\a_n\}_{n\in\bn}$ and $\{h_n\}_{n\in\bn}$ can be chosen such that the diffeomorphism in {\rm(iv)} satisfies $\G_n(\mathpzc{f})=o(\ln n)$.
\end{itemize}
\end{prop}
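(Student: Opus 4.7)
The plan is to build upon the inductive Anosov--Katok--Minkowski construction carried out in Proposition 2.1 of \cite{M}, which already delivers items (i)--(v); only items (vi) and (vii) require genuine new work. At each step of the construction there are two independent degrees of freedom: the choice of the rational approximant $\a_{n+1}=p_{n+1}/q_{n+1}$ to $\a$, and the choice of the commuting diffeomorphism $h_{n+1}$. I would exploit the first freedom to control the growth sequence, while leaving the second as in \cite{M} in order to realise the prescribed ratio set ${\rm F}$.

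The decisive observation for (vi) is that, since $\a_{n+1}=p_{n+1}/q_{n+1}$ is rational in lowest terms, the diffeomorphism $\mathpzc{f}_n=H_n\circ R_{\a_{n+1}}\circ H_n^{-1}$ has exact period $q_{n+1}$. From the identity $\mathpzc{f}_n^{\,j}=H_n\circ R_{j\a_{n+1}}\circ H_n^{-1}$ and $|DR_\th|\equiv 1$, the chain rule yields the uniform estimate
\begin{equation*}
\|D\mathpzc{f}_n^{\pm j}\|_\infty\leq \|DH_n\|_\infty\|DH_n^{-1}\|_\infty=:C_n,\qquad j\in\bn.
\end{equation*}
Since $H_n$ is frozen the moment one selects $\a_{n+1}$, $C_n$ is a concrete number at that stage. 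Condition {\bf(L)} provides, for every prescribed $N$, rational approximants $p/q$ to $\a$ with $|\a-p/q|<q^{-N}$ and $q$ arbitrarily large; this lets me impose, besides the smallness of $|\a_{n+1}-\a_n|$ already demanded in \cite{M} to guarantee the $C^\infty$ convergence in (iv), the further constraint $q_{n+1}\geq n\,C_n^{\,2}$.

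To propagate the bound from $\mathpzc{f}_n$ to the limit $\mathpzc{f}$, I would write $\mathpzc{f}=\mathpzc{f}_n\circ\phi_n$ with $\phi_n:=\mathpzc{f}_n^{-1}\circ\mathpzc{f}$, and use the telescoping identity
\begin{equation*}
\mathpzc{f}^{\,k}=\mathpzc{f}_n^{\,k}\circ\big(\mathpzc{f}_n^{-(k-1)}\!\circ\phi_n\circ\mathpzc{f}_n^{\,k-1}\big)\circ\cdots\circ\big(\mathpzc{f}_n^{-1}\circ\phi_n\circ\mathpzc{f}_n\big)\circ\phi_n.
\end{equation*}
By further tightening $|\a_{n+1}-\a_n|$ (again via the Liouville freedom on $q_{n+1}$) I would arrange $\|\phi_n-\id_\bt\|_{C^1}=o(C_n^{-1}q_{n+1}^{-1})$. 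Each conjugate $\mathpzc{f}_n^{-j}\circ\phi_n\circ\mathpzc{f}_n^{\,j}$ then has derivative within $o(q_{n+1}^{-1})$ of $1$ thanks to the uniform bound $C_n$ on $D\mathpzc{f}_n^{\pm j}$, so the full product on the right has derivative $1+o(1)$ uniformly for $k<q_{n+1}$. Combined with $\|D\mathpzc{f}_n^{\pm k}\|_\infty\leq C_n$, this gives $\G_k(\mathpzc{f})\leq 2C_n$ for every $k$ with $q_n\leq k<q_{n+1}$; since $k\geq q_n\geq (n-1)\,C_{n-1}^{\,2}$, one obtains $\G_k(\mathpzc{f})/k\to 0$, proving (vi).

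For (vii), condition {\bf(UL)} provides rational approximants with $q_{n+1}\geq\l^{q_n^N}$ for arbitrary $\l>1$ and $N\in\bn$. Provided the $h_n$ are chosen so that $C_n$ grows sufficiently slowly, which is compatible with the ratio-set prescription of \cite{M} since the $h_n$ there can be taken as controlled perturbations of the identity supported on suitable Rokhlin towers, the selection $q_{n+1}\geq e^{e^{nC_n}}$ yields, for $k\in[q_n,q_{n+1})$, the bound $\G_k(\mathpzc{f})\leq 2C_n\leq \ln\ln q_{n+1}$ and therefore $\G_k(\mathpzc{f})=o(\ln k)$. The main obstacle I expect lies precisely in the telescoping step: the naive estimate $\|D\mathpzc{f}^{\,k}\|_\infty\leq\|D\mathpzc{f}\|_\infty^{\,k}$ being exponential in $k$, the whole argument depends on showing that, thanks to the periodicity of the $\mathpzc{f}_n$ and to the sufficiently rapid {\bf(L)}/{\bf(UL)} approximation, the correction factor in the telescoping stays uniformly $C^1$-close to the identity across the full range $k<q_{n+1}$, even though both $C_n$ and $q_{n+1}$ diverge.
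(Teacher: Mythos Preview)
Your overall plan---exploit the periodicity of the rational-rotation conjugates $\mathpzc{f}_n$ to get uniform derivative bounds, and use the Liouville freedom on $\a_{n+1}$ to control the error between $\mathpzc{f}$ and $\mathpzc{f}_n$---is exactly the right engine, and it is also what drives the paper's argument. But your telescoping scheme and the paper's differ, and yours has a concrete gap.

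\textbf{The paper's route.} For (vi) the paper does \emph{not} telescope the power $k$ of $\mathpzc{f}$. It telescopes the approximation index: for each fixed $n\in\bz$,
\[
\|D\mathpzc{f}^n-D\mathpzc{f}_k^n\|_\infty\le\sum_{l\ge k+1} d_1(\mathpzc{f}_l^n,\mathpzc{f}_{l-1}^n),
\]
and then invokes Lemma 2.3 of \cite{M}, which gives the Lipschitz-in-angle estimate
\[
d_1\big(H_l R_{n\a_l}H_l^{-1},\,H_l R_{n\a_{l+1}}H_l^{-1}\big)\le C(1)\,|\!|\!| H_l|\!|\!|_2^{\,2}\,|n|\,|\a_l-\a_{l+1}|.
\]
The Liouville freedom is used to force $C(1)|\!|\!| H_l|\!|\!|_2^{\,2}|\a_l-\a_{l+1}|\le 2^{-l}$, so that $\|D\mathpzc{f}^n-D\mathpzc{f}_k^n\|\le |n|/2^{k}$. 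Combining with your own observation $\|D\mathpzc{f}_k^n\|\le\g(\mathpzc{f}_k)<\infty$ and choosing $k=k(|n|)\to\infty$ slowly enough that $\g(\mathpzc{f}_{k(|n|)})\le|n|^{1/2}$ gives $\G_n(\mathpzc{f})=o(n)$. Part (vii) is the same skeleton with the {\bf(UL)} condition inserted to force the sharper summability needed for $o(\ln n)$. Note that the $C^2$ dependence is packaged once and for all into $|\!|\!| H_l|\!|\!|_2$; no separate $C^2$ bookkeeping on iterates is required.

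\textbf{The gap in your route.} Your telescoping in the power writes $\mathpzc{f}^k$ as $\mathpzc{f}_n^k$ composed with a product of conjugates $\psi_j:=\mathpzc{f}_n^{-j}\circ\phi_n\circ\mathpzc{f}_n^{\,j}$, and you assert that each $D\psi_j$ is within $o(q_{n+1}^{-1})$ of $1$ ``thanks to the uniform bound $C_n$ on $D\mathpzc{f}_n^{\pm j}$''. That inference is not valid with only a $C^1$ bound. The chain rule gives, with $y=\mathpzc{f}_n^{\,j}(x)$,
\[
D\psi_j(x)=\frac{D\mathpzc{f}_n^{-j}(\phi_n(y))}{D\mathpzc{f}_n^{-j}(y)}\cdot D\phi_n(y),
\]
and while $D\phi_n(y)=1+o(C_n^{-1}q_{n+1}^{-1})$ by hypothesis, the first factor is close to $1$ only if $D\mathpzc{f}_n^{-j}$ varies little over the displacement $|\phi_n(y)-y|$. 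That requires control of $\|D^2\mathpzc{f}_n^{-j}\|_\infty$ (uniformly in $j$), i.e.\ a $C^2$ bound, not the $C^1$ quantity $C_n$. Without it your product of $k<q_{n+1}$ factors need not stay $C^1$-close to the identity. The fix is to replace $C_n$ by a constant dominating $|\!|\!| H_n|\!|\!|_2^{\,2}$ and to tighten $\|\phi_n-\id\|_{C^1}$ accordingly; but at that point you are essentially reproving Lemma 2.3 of \cite{M}, and the paper's telescoping-in-$k$ route reaches the conclusion with less work. A secondary issue: $\phi_n=\mathpzc{f}_n^{-1}\circ\mathpzc{f}$ depends on the entire tail of the construction, so ``tightening $|\a_{n+1}-\a_n|$'' alone does not control it---you must impose summable smallness on all subsequent $|\a_{l+1}-\a_l|$, which is again precisely what the paper's scheme does.
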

\begin{proof}
(i)-(v) are proven in \cite{M}. 

(vi) Let $\{\mathpzc{g}_{l}\mid l\in\bn\}$ be any sequence of $C^1$-diffeomorphisms of the circle, and $m>l$. We get
\begin{align*}
&\|D\mathpzc{g}_m-D\mathpzc{g}_l\|\leq\sum_{k=l+1}^m\|D\mathpzc{g}_k-D\mathpzc{g}_{k-1}\|\\
\leq&\sum_{k=l+1}^m\big(\|D\mathpzc{g}_k-D\mathpzc{g}_{k-1}\|\vee\|D\mathpzc{g}^{-1}_k-D\mathpzc{}^{-1}_{k-1}\|\big)\\
=&\sum_{k=l+1}^m d_1(\mathpzc{g}_k,\mathpzc{g}_{k-1})\leq\sum_{k=l+1}^{+\infty} d_1(\mathpzc{g}_k,\mathpzc{g}_{k-1}).
\end{align*}
If $\mathpzc{g}_m$ converges together with its $1^{{\rm st}}$-derivative to the diffeomorphism $\mathpzc{g}$, by taking the limit on the l.h.s., we get
$$
\|D\mathpzc{g}-D\mathpzc{g}_l\|\leq \sum_{k=l+1}^{+\infty} d_1(\mathpzc{g}_k,\mathpzc{g}_{k-1}).
$$
Let $\mathpzc{g}=\mathpzc{f}^n$, $n\in\bz$, with $\mathpzc{f}$ any diffeomorphism as in (iv), constructed
in Proposition 2.1 in \cite{M}. With $C(1)$ the constant appearing in Lemma 2.3 of \cite{M} when $r=1$, by Lemma 2.3 in that paper, we obtain
\begin{equation}
\label{2cz1a}
\begin{split}
&d_1(\mathpzc{f}^n_k,\mathpzc{f}^n_{k-1})=d_1\big(H_kR^n_{\a_k}H^{-1}_k,H_kR^n_{\a_{k+1}}H^{-1}\big)\\
=&d_1\big(H_kR_{n\a_k}H^{-1}_k,H_kR_{n\a_{k+1}}H^{-1}\big)\\
\leq&C(1)|\!|\!| H_k|\!|\!|^2_{2}|n||\a_k-\a_{k+1}|.
\end{split}
\end{equation}
By the definition of Liouville number, we can choose (see pag. 1405 of \cite{M}) an approximation $p_k/q_k\to\a$ such that
\begin{equation}
\label{1cz1a}
C(1)|\!|\!| H_k|\!|\!|^2_{2}|\a_k-\a_{k+1}|\leq 2C(1)|\!|\!| H_k|\!|\!|^2_{2}|\a-\a_{k}| \leq \frac{1}{2^{k}}.
\end{equation}
By inserting \eqref{1cz1a} in \eqref{2cz1a}, we get
$$
d_1(\mathpzc{f}^n_k,\mathpzc{f}^n_{k-1})\leq\frac{|n|}{2^{k}},\quad n\in\bz,\,k=1,2.\dots\,\,.
$$
Moreover, 
$$
\mathpzc{f}^n_k\to \mathpzc{f}^n,\,\, D\mathpzc{f}^n_k\to D\mathpzc{f}^n,\quad n\in\bz,
$$
and thus
$$
\|D\mathpzc{f}^n-D\mathpzc{f}^n_k\|\leq \sum_{l=k+1}^{+\infty} d_1(\mathpzc{f}^n_l,\mathpzc{f}^n_{l-1})=\frac{|n|}{2^{k}},\quad n\in\bz,\,\,k\in\bn.
$$
By using again the triangle inequality, we argue
\begin{equation*}
\|D\mathpzc{f}^n\|\leq\frac{|n|}{2^{k}}+\|D\mathpzc{f}^n_k\|\leq\frac{|n|}{2^{k}}+\g(\mathpzc{f}_k)<+\infty,\quad n\in\bz,\,\,k\in\bn,
\end{equation*}
since $\mathpzc{f}_k$ is conjugate to a rational rotation through a diffeomorphism. 

Choose now any strictly increasing sequence of integers $\{k_m\mid m\in\bn\}$ such that $\g(\mathpzc{f}_{k_m})\leq m^{1/2}$. Then
$$
|n|^{-1}\|D\mathpzc{f}^n\|\leq2^{-k_{|n|}-2}+|n|^{-1/2}\to0.
$$

(vii) First we note that, if {\bf(UL)} is satisfied, then $\a$ is {\it a fortiori} a Liouville number, hence conditions (3.4) and (3.5) in \cite{M} (needed  to construct beforehand a sequence of diffeomorphisms $\hat h_n$ such that the diffeomorphism $\mathpzc{f}$ has the desired type, {\it i.e.} $\ty{II_\infty}$ or $\ty{III_\l}, \l\in[0,1]$), are automatically fulfilled.

Assume {\bf(UL)} with $2N$ instead of $N$. For each $N\in\bn$, $\eps>0$ and $C>1$, a simple computation then yields that, if 
$$
\l>e^{C^2}\left(\frac{q^{2N}}{\eps^2}\right)^{\frac1{2q^{2N}}},
$$
then the inequality
\begin{equation}
\label{uLn}
 \left|\a - \frac{p}{q}\right| <\frac{\eps}{q^N e^{(C q^N)^2}}
\end{equation}
has an infinite number of solutions with $(p,q)=1$. Hence, for such an $\a$ we can immediately assume the latter condition and perform the same reasonings as in \cite{M}.

By (2.4) in \cite{M}, we have
$$
\|D\mathpzc{f}_k^n-D\id_\bt\| \leq |\!|\!|\mathpzc{f}_k^n|\!|\!|_1 \leq |\!|\!|H_k|\!|\!|_2^2 \leq C(k,2)^2 q_k^{2 N(k,2)}.
$$
Therefore, setting
$$
\eps=\frac{1}{2^{k+1} C(1)C(k,2)^2}, \quad N=2 N(k,2), \quad C=C(k,2)^2,
$$
and
\begin{equation}
\label{seq}
m_{k} := e^{\left(C(k,2)^2 q_k^{2N(k,2)}\right)^2},
\end{equation}
we can find $p_k, q_k \in \bn$ with $(p_k,q_k)=1$ such that 
$$
\left| \a - \frac{p_k}{q_k} \right| < \frac{\eps}{q_k^N e^{(C q_k^N)^2}} \leq  \frac{1}{2^{k+1}m_{k}\, C(1)|\!|\!|H_k|\!|\!|_2^2}.
$$
Now, as $\{q_k\mid k\in\bn\}$ is a strictly increasing sequence, also $\{m_k\mid k\in\bn\}$ is strictly increasing, hence $m_{k} \to+\infty$ when $k \to+\infty$. Thus, we can define the following two-sided sequence
\begin{equation}
\label{subs}
k_{n} := 
\left\{
\begin{array}{lr}
1 & \mbox{for } |n| < m_2, \\
\sup \{k\mid m_{k} <|n| < m_{k+1} \} & \mbox{for } |n|\geq m_2.
\end{array}
\right. 
\end{equation}
Notice that $\{k_n\mid n\in\bz\}$ is a symmetric sequence and $\{k_n\mid n\geq0\}$ is non decreasing with $\sup_{|n|} k_n = +\infty$ (hence $k_{n} \to +\infty$ as $|n| \to +\infty$). Then for $|n|\geq m_2$,
$$
C(k_{n},2)^2 q_{k_{n}}^{2N(k_{n},2)} < \sqrt{\ln |n|} < C(k_{n}+1,2)^2 q_{k_{n}+1}^{2N(k_{n}+1,2)}.
$$
Thus, we get for any $n\in\bz$, $|n|\geq m_2$,
\begin{equation}
\label{gs}
 \|D\mathpzc{f}_{k_{n}}^n\| \leq \sqrt{\ln |n|}+1.
\end{equation}
By adapting part of Lemma 5.7 in \cite{FS} to the present case, and using the sequence $m_k$ given by \eqref{seq}, we see that, if
$$
\left| \a - \frac{p_k}{q_k} \right| < \frac{1}{2^{k+1}m_{k}\, C(1)|\!|\!|H_k|\!|\!|_2^2},
$$
then we have $d_1(\mathpzc{f}^n,\mathpzc{f}^n_k) \leq \frac{1}{2^k}$ for any $|n| \leq m_{k+1}$, $k\in\bn$. Indeed, again by Lemma 2.3 in \cite{M} (see also Lemma 5.6 in \cite{FS}), 
$$
 d_1\big(H R_{\a} H^{-1},H R_{\b} H^{-1}\big) \leq C(1) |\!|\!|H|\!|\!|_{2}^{2}|\a-\b|,
$$
and thus
\begin{align*}
 d_1(\mathpzc{f}_{k-1}^n,\mathpzc{f}_k^n) = &d_1\big(H_k R_{n\a_k} H_k^{-1},H_k R_{n\a_{k+1}} H_k^{-1}\big)\\
 \leq &C(1) |\!|\!|H_k|\!|\!|_2^2\,|n||\a_k-\a_{k+1}| \\
 \leq &2C(1) |\!|\!|H_k|\!|\!|_2^2\,|n| |\a-\a_k|\leq \frac{1}{2^k} 
\end{align*}
for any $|n| \leq m_{k}$. Then
$$
 d_1(\mathpzc{f}^n,\mathpzc{f}_{k-1}^n) \leq \sum_{i=k}^{+\infty}d_1(\mathpzc{f}_{i-1}^n,\mathpzc{f}_i^n) \leq \frac{1}{2^{k-1}},
$$
whence the thesis.

Finally, proceeding as in point (vi) above, we get for any $|n| \leq m_{k+1}$,
$$
\|D\mathpzc{f}^n-D\mathpzc{f}^n_k\|\leq \sum_{i=k+1}^{+\infty} d_1(\mathpzc{f}^n_i,\mathpzc{f}^n_{i-1})=\frac{1}{2^{k}},
$$
and thus, by the triangle inequality,
\begin{equation*}
\|D\mathpzc{f}^n\|\leq\frac{1}{2^{k}}+\|D\mathpzc{f}^n_k\|,\quad |n| \leq m_{k+1},\,\,k\in\bn.
\end{equation*}
Now, take a sequence $\{m_{k}\mid k\in\bn\}$ as in \eqref{seq} and a sequence $\{k_{n}\mid n\in\bz\}$ as in \eqref{subs}, and notice that $|n| \to+\infty$ implies $k_{n} \to +\infty$ and $m_{k_{n}} \to +\infty$, the claim follows by \eqref{gs} .
\end{proof}
To conclude this section, we show how to construct examples of such ``ultra-Liouville'' numbers {\bf(UL)} by following the lines in \cite{S}.
Indeed, let $\alpha$ denote the sum of reciprocals of tower powers which are defined as follows:
$$
\a = \sum_{n=1}^{+\infty} \frac{1}{q_n},
$$
where
$$
q_1=2, \quad q_n=(2^n)^{2^{q_{n-1}}}, \quad n=1,2,\dots\,\,.
$$
As the denominator of the $n$-th partial sum $s_n$ of the series is $q_n$, we see that \eqref{uLn} holds true with $s_n=:\frac{p_n}{q_n}$ and for all $n$ satisfying
$$
n 2^{q_n} \log_{\l} 2 \geq q_n^N 
$$
(notice that the previous inequality \eqref{uLn} is eventually satisfied). Indeed, we have
\begin{eqnarray*}
 & & \left| \a - \frac{p_n}{q_n} \right| = \sum_{k=n+1}^{+\infty} \frac{1}{q_k} \leq \frac{1}{(2^{n+1})^{2^{q_n}}} \sum_{k=0}^{+\infty} \frac{1}{2^k} < \frac{1}{(2^n)^{2^{q_n}}} . \\
\end{eqnarray*}

\section{A class of states}
\label{secstat}

\noindent
Let $\m\in\cs(C(\bt))$ be a probability measure on $\bt$. As shown in Proposition 2.1 of \cite{A}, 
\begin{equation}
\label{ommu}
\om_\m(W(f)):=\sum_{m\in\bz}\widecheck{\m}(m)f(m,0)
\end{equation}
is well defined, positive and normalised, and therefore it defines a state on $\ba_{2\a}$. 

The following two results concern the faithfulness properties of the state $\om_\m$.
\begin{prop}
\label{st}
The state $\om_\m\in\cs(\ba_{2\a})$ is faithful if and only if $\supp(\m)=\bt$.
\end{prop}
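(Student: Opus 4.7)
The plan is to exhibit $\om_\m$ as a composition $\m\circ E$, where $E\colon\ba_{2\a}\to C^*(U)\simeq C(\bt)$ is the canonical conditional expectation onto the abelian subalgebra generated by $U$, obtained by averaging over the gauge action $\g_s(U)=U$, $\g_s(V)=e^{2\pi\imath s}V$, $s\in\bt$. Explicitly, $E(a):=\int_0^1\g_s(a)\di s$; on Weyl generators this gives $E(W(m,n))=\d_{n,0}U^m$, and hence for $W(f)$ with $f$ of finite support a direct computation yields $\m(E(W(f)))=\sum_m \widecheck{\m}(m)\,f(m,0)=\om_\m(W(f))$, once $\m$ is viewed as a state on $C(\bt)$. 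The identity $\om_\m=\m\circ E$ then extends to all of $\ba_{2\a}$ by $C^*$-norm continuity of both functionals on the dense $*$-algebra of Weyl polynomials.

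Next I would verify that $E$ is a faithful conditional expectation: if $a\in\ba_{2\a}$ satisfies $E(a^*a)=0$, then $\int_0^1\g_s(a)^*\g_s(a)\di s=0$, and since the integrand is a norm-continuous nonnegative operator-valued function of $s$, it must vanish identically, forcing $\g_s(a)=0$ for every $s$ and thus $a=0$.

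I would then reduce matters to the elementary fact that the probability measure $\m$, viewed as a state on $C(\bt)$, is faithful exactly when $\supp(\m)=\bt$: if the support is full and $g\in C(\bt)$ satisfies $\int|g|^2\di\m=0$, then $|g|^2$ vanishes on $\supp(\m)=\bt$, forcing $g\equiv0$; conversely, if $\supp(\m)$ is a proper closed subset of $\bt$, one can pick a nonzero continuous bump $g$ supported in its open complement, so that $\int|g|^2\di\m=0$ while $g\neq0$.

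Combining these ingredients gives both implications. If $\supp(\m)=\bt$ and $\om_\m(a^*a)=0$ then $\m(E(a^*a))=0$; faithfulness of $\m$ forces $E(a^*a)=0$, and faithfulness of $E$ then forces $a=0$. Conversely, if $\supp(\m)\subsetneq\bt$, transporting a bump $g$ as above to the corresponding element $a\in C^*(U)\subset\ba_{2\a}$ yields $a\neq0$ with $\om_\m(a^*a)=\int|g|^2\di\m=0$, so $\om_\m$ is not faithful. The only step that requires genuine care is the identification $\om_\m=\m\circ E$ and its extension to the whole $C^*$-algebra; the remaining pieces are routine facts about faithful gauge averages and supports of Radon measures on $\bt$.
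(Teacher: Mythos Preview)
Your proof is correct and takes a genuinely different route from the paper. You factor $\om_\m=\m\circ E$ through the faithful gauge-average conditional expectation $E\colon\ba_{2\a}\to C^*(U)\simeq C(\bt)$, reducing faithfulness of $\om_\m$ on the whole $C^*$-algebra to faithfulness of $\m$ as a state on $C(\bt)$. The paper instead computes directly
\[
\om_\m\big(W(f)^*W(f)\big)=\sum_{n\in\bz}\int_\bt\big|\widecheck{f^{(n)}}(z)\big|^2\,\di\m_n(z),\qquad \m_n:=\m\circ R^{-n},
\]
and argues that each summand vanishes only if the continuous function $\widecheck{f^{(n)}}$ is identically zero, since $\supp(\m_n)=\bt$. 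The ``only if'' direction (bump function supported off $\supp(\m)$, viewed inside $C^*(U)$) is essentially identical in both arguments. Your approach is cleaner and manifestly handles arbitrary elements of $\ba_{2\a}$ without any discussion of Fourier coefficients; the paper's explicit computation, on the other hand, produces the formula displayed above, which is reused immediately afterwards in the proof that $s(\om_\m)$ is central (Proposition \ref{st1}) and in identifying the GNS space.
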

\begin{proof}
Suppose $\supp(\m)\subsetneq\bt$. Choose a continuous non zero function $\check g\in C(\bt)$ with $\supp(\check g)\cap\supp(\m)=\emptyset$. The function $\check g$ comes from $g\in C^*(\bz)$. By setting
$G(m,n):=g(m)\d_{n,0}$, we get 
\begin{align*}
\om_\m(W(G)^*W(G))=&\om_\m(G^\star *_{2\a} G)=\int_\bt\di\m(z)\sum_{m\in\bz}z^m(g^\star *g)(m)\\
=&\int_\bt\di\m(z)\left(\widecheck{g^\star *g}\right)(z)=\int_\bt\di\m(z)\big|\check g(z)\big|^2=0.
\end{align*}
Thus, $\om_\m$ is not faithful. 

Concerning the reverse implication, we compute
\begin{equation}
\label{1}
\begin{split}
&\om_\m(W(f)^*W(f))=\sum_{m\in\bz}\widecheck{\m}(m)\big(f^\star*_{2\a} f\big)(m,0)\\
=&\sum_{n\in\bz}\int_\bt\di \m(z)\sum_{m\in\bz}\big(ze^{-2\pi\imath\a n}\big)^{m}\left(f^{(-n)\star}* f^{(-n)}\right)(m)\\
=&\sum_{n\in\bz}\int_\bt\di \m(z)\left|\widecheck{f^{(-n)}}\big(ze^{-2\pi\imath\a n}\big)\right|^2
=\sum_{n\in\bz}\int_\bt\di \m_{n}(z)\left|\widecheck{f^{(n)}}(z)\right|^2,
\end{split}
\end{equation}
where $\m_{n}=\m\circ R^{-n}_\a$ with $R^{-n}_\a$ the rotation of the angle $-2\pi n\a$.

If $\om_\m(W(f)^*W(f))=0$, then $\int\di \m_{n}(z)\left|\widecheck{f^{(n)}}(z)\right|^2 = 0$ for each $n\in\bz$. As $\left\{\widecheck{f^{(n)}}\mid n\in\bz\right\}\subset C(\bt)$, this implies $\widecheck{f^{(n)}}=0$, $n\in\bz$, identically on $\bt$ since $\supp(\m_n)=\bt$.
Then for each $n\in\bz$, $f^{(n)}=0$ identically on $\bz$, which means $f(m,n)=0$, for each $n,m\in\bz$. Thus, we conclude that $W(f)=0$.
\end{proof}
Since $\ba_{2\a}$ is simple, we now check the stronger property of the vector state extension $\langle\,{\bf\cdot}\,\xi_\om,\xi_\om\rangle$ of $\om_\m$ to $\pi_{\om_\m}(\ba_{2\a})''$ to be faithful.
\begin{prop}
\label{st1}
Let $R$ be the rotation of the angle $2\pi\a$ on the unit circle $\bt$, and suppose that $\m\circ R^{2n}\preceq\m$, $n\in\bz$. Then the support of $\om_\m$ in the bidual is central: $s(\om_\m)\in Z(\ba_{2\a}^{**})$.
\end{prop}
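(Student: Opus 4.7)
By Proposition \ref{ccbl} it suffices to show that $\xi_{\om_\m}$ is cyclic for the commutant $\pi_{\om_\m}(\ba_{2\a})'$. Using \eqref{1}, I would identify $\ch_{\om_\m}$ with the Hilbert direct sum $\bigoplus_{k\in\bz}L^2(\bt,\m_k)$ via $W(f)\xi_{\om_\m}\mapsto \big(\widecheck{f^{(k)}}\big)_{k\in\bz}$; under this identification $\xi_{\om_\m}$ is the constant function $1$ in the $k=0$ slot. Direct computation of $UW(f)$ and $VW(f)$ then yields that $\pi_{\om_\m}(U)$ acts on the $k$-th summand by multiplication by $e^{2\pi\imath\a k}z$, while $\pi_{\om_\m}(V):L^2(\bt,\m_k)\to L^2(\bt,\m_{k+1})$ is the unitary $\phi\mapsto\phi\circ R^{-1}$, which is well defined on the nose since $(R^{-1})_\ast\m_{k+1}=\m_k$.

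The hypothesis $\m\circ R^{2n}\preceq\m$ is symmetric under $n\mapsto-n$, and pushing forward by $R^{\mp 2n}$ upgrades it to the equivalence $\m_k\sim\m_{k+2}$ for every $k\in\bz$. This is precisely the condition allowing the construction of right-action companions inside the commutant: set $\r(U)$ to act on the $k$-th summand by multiplication by $e^{-2\pi\imath\a k}z$, and define $\r(V):L^2(\bt,\m_k)\to L^2(\bt,\m_{k+1})$ by
$$
(\r(V)\phi)(z):=\sqrt{h_k(Rz)}\,\phi(Rz),\qquad h_k:=\frac{\di\m_k}{\di\m_{k+2}}.
$$
A change of variables confirms that $\r(V)$ is unitary, and the transformation rule $h_{k+1}(Rz)=h_k(z)$, a direct consequence of the pushforward relation $R_\ast\m_n=\m_{n+1}$, combined with short explicit computations, shows that both $\r(U)$ and $\r(V)$ commute with $\pi_{\om_\m}(U)$ and $\pi_{\om_\m}(V)$, hence lie in $\pi_{\om_\m}(\ba_{2\a})'$.

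To conclude, I would show that $\xi_{\om_\m}$ is cyclic for the subalgebra generated by $\r(U)$ and $\r(V)$. An induction on $|n|$ gives $\r(V)^n\xi_{\om_\m}=G_n\in L^2(\bt,\m_n)$, where $G_n$ is a finite product of square roots of Radon-Nikodym derivatives of equivalent measures, so $G_n>0$ $\m_n$-a.e. Consequently $\r(U)^m\r(V)^n\xi_{\om_\m}=e^{-2\pi\imath\a nm}z^m G_n$ in the $n$-th summand. Since trigonometric polynomials are dense in $L^2(\bt,\m_n)$ by the Stone-Weierstrass theorem and $G_n$ is a.e.\ nonzero, the set $\{z^m G_n:m\in\bz\}$ is dense in $L^2(\bt,\m_n)$; summing over $n$ produces a dense subspace of $\ch_{\om_\m}$.

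The main delicacy in this plan is the bookkeeping of the measures $\m_k$, their Radon-Nikodym densities $h_k$, and the covariance identity needed to turn $\r(U),\r(V)$ into genuine commutant elements; the hypothesis $\m\circ R^{2n}\preceq\m$ enters precisely to guarantee that the densities $\di\m_k/\di\m_{k+2}$ are well defined and a.e.\ strictly positive.
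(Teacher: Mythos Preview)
Your argument is correct and takes a genuinely different route from the paper's. The paper invokes Takesaki's criterion (Theorem~3 of \cite{T}): one shows that $\om_\m(A_k^*A_k)\to0$ together with $\om_\m\big((A_l-A_k)(A_l-A_k)^*\big)\to0$ forces $\om_\m(A_kA_k^*)\to0$, via the identities \eqref{3a}--\eqref{3c} and a convergence-in-measure argument using the Radon--Nikodym densities $d_n=\di\m_{-2n}/\di\m$. Your approach is instead constructive: you build explicit unitaries $\r(U),\r(V)$ in the commutant (essentially the right regular representation, anticipating what $J\pi_{\om_\m}(\,\cdot\,)J$ would give once the modular conjugation of Theorem~\ref{mod} is known) and verify cyclicity of $\xi_{\om_\m}$ directly. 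Your method has the advantage of being self-contained and of exhibiting concretely why the hypothesis $\m_{-n}\sim\m_n$ is exactly what is needed; the paper's route is shorter once Takesaki's criterion is granted, and avoids having to set up the GNS picture of Proposition~\ref{ggnnss} in advance.

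Two small points. First, your density step ``trigonometric polynomials dense and $G_n$ a.e.\ nonzero'' needs one more line: if $\psi\in L^2(\bt,\m_n)$ is orthogonal to every $z^mG_n$, then the finite complex measure $G_n\overline{\psi}\,\di\m_n$ has vanishing Fourier coefficients, hence is zero, and $G_n>0$ a.e.\ forces $\psi=0$; mere density of polynomials does not by itself push through multiplication by an unbounded $G_n$. Second, note that your identification of $\ch_{\om_\m}$ and the action of $\pi_{\om_\m}(U),\pi_{\om_\m}(V)$ anticipates Proposition~\ref{ggnnss}, which in the paper comes after this result; logically there is no circularity, but in writing it up you should either derive those formulas on the spot from \eqref{1} or reorder the exposition.
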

\begin{proof}
By taking into account Proposition \ref{ccbl}, we apply Theorem 3 in \cite{T}, which reads as follows: given a sequence $\{A_k\mid k\in\bn\}\subset\ba_{2\a}$, if $\om_\m(A_k^*A_k)\to0$ and $\om_\m\big((A_l-A_k)(A_l-A_k)^*\big)\to0$ imply that also $\om_\m(A_kA_k^*)\to0$, then $s(\om_\m)$ is central.

Since $\m_{2n}\preceq\m$ for each $n\in\bz$, we can directly assume that $\m_{2n}\sim\m$, $n\in\bz$. Fix a sequence $\{A_k\mid k\in\bn\}\subset\ba_{2\a}$, with the corresponding sequence of Fourier coefficients $\{f_k\mid k\in\bn\}$ given in \eqref{zfua}, such that $A_k=W(f_k)$, $k\in\bn$. By the same computations as in \eqref{1} for the sequences $f_k$ and $f^*_k$, we obtain respectively,
\begin{align*}
&\om_\m(A_k^*A_k)=\sum_{n\in\bz}\int_\bt\di \m_{n}(z)\left|\widecheck{f_k^{(n)}}(z)\right|^2,\\
&\om_\m(A_kA_k^*)=\sum_{n\in\bz}\int_\bt\di \m_{-n}(z)\left|\widecheck{f_k^{(n)}}(z)\right|^2.
\end{align*}
By defining
$$
g_k^{(n)}(z):=\widecheck{f_k^{(n)}}\left(ze^{2\pi\imath\a n}\right),
$$
we obtain
\begin{equation}
\label{3a}
\om_\m(A_k^*A_k)=\sum_{n\in\bz}\int_\bt\di \m(z)\left|g_k^{(n)}(z)\right|^2,
\end{equation}
\begin{equation}
\label{3b}
\om_\m(A_kA_k^*)=\sum_{n\in\bz}\int_\bt\di \m_{-2n}(z)\left|{g_k^{(n)}}(z)\right|^2.
\end{equation}
Moreover, we have
\begin{equation}
\label{3c}
\om_\m\big((A_l-A_k)(A_l-A_k)^*\big)=\sum_{n\in\bz}\int_\bt\di\m_{-2n}(z)\left|g_l^{(n)}(z)-g_k^{(n)}(z)\right|^2.
\end{equation}
Suppose now that $\om_\m(A_k^*A_k)\to0$. By \eqref{3a}, this implies that $\big|g_k^{(n)}\big|^2\to0$ in $\m$-measure for each $n\in\bz$. By assumption, $\di\m_{-2n}(z)=d_n(z)\di\m(z)$ for a $L^1$-function $d_n$ with $1/d_n$ again in $L^1$. This implies that $g_k^{(n)}\to0$ in $\m_{-2n}$-measure as well. In fact, let $\eps>0$ be given, and define 
$$
E^{(n,\eps)}_k:=\left\{z\in\bt\mid \big|g_k^{(n)}(z)\big|^2\geq\eps\right\}.
$$ 
Notice that $0\leq\chi_{E^{(n,\eps)}_k}\leq1$, and $\chi_{E^{(n,\eps)}_k}\to0$ in $L^1(\bt,\m)$ as $k \to+\infty$.
By the Lebesgue Dominated Convergence theorem, we then get for each $n \in \bz$,
$$
\int_{E^{(n,\eps)}_k}\di\m_{-2n}(z)=\int_\bt\chi_{E^{(n,\eps)}_k}(z)d_n(z)\di\m(z)\to0.
$$
By \eqref{3c}, $\om_\m\big((A_l-A_k)(A_l-A_k)^*\big)\to0$ as $k,l \to +\infty$, implies that $\left\{\oplus_{n\in\bz}g_k^{(n)}\right\}$ is a Cauchy sequence in $\bigoplus_{n\in\bz}L^2(\bt,\m_{-2n})$, hence it converges to a function $g=\oplus_{n\in\bz}g^{(n)}$. 
Thus, on one hand $g_k^{(n)}\to0$ in $\m_{-2n}$-measure, and on the other hand $\oplus_{n\in\bz}g_k^{(n)}\to g$ in $\bigoplus_{n\in\bz}L^2(\bt,\m_{-2n})$. But this necessarily implies that $g=0$. Therefore,
$$
\sum_{n\in\bz}\int_\bt\big|g_k^{(n)}\big|^2\di\m_{-2n}\to0,
$$ 
and thus by \eqref{3b} we get $\om_\m(A_kA_k^*)\to0$.
\end{proof}
We conclude the present section by exhibiting other states associated with measures supported on the transverse circle of $\bt^2$. Indeed, fix any probability measure
$\n$ on $\bt$ and consider the
measure on $\bt^2$ uniquely defined by the Riesz-Markov Representation theorem as
$$
\m_\n(F)\equiv\int_{\bt^2}F(z,w)\di\m_\n(z,w):=\int_{\bt^2}F(zw,w)\di\n(z)\di m(w),\,\, F\in C(\bt^2).
$$
Concerning its characteristic function, we compute
\begin{align*}
\widecheck{\m_\n}(m,n)=&\int_{\bt^2}z^mw^{n}\di\m_\n(z,w)=\int_{\bt^2}z^mw^{m+n}\di\n(z)\di m(w)\\
=&\int_{\bt}\di\n(z)z^m\int_{\bt}\di m(w)w^{m+n}=\widecheck{\n}(m)\d_{m+n,0}.
\end{align*}
For $W(f)\in\ba_{2\a}$ with $f\in\cb(\bz^2)$ with finite support, we set
\begin{equation}
\label{trczm}
\om(W(f)):=\sum_{m,n}e^{2\pi\imath\a m^2}\widecheck{\m_\n}(m,n)f(m,n).
\end{equation}
The above function would define a state on $\ba_{2\a}$. In fact, we have
\begin{prop}
The function $\om$ in \eqref{trczm} is positive on the unital $*$-subalgebra of $\ba_{2\a}$ given by $\{W(f)\in\ba_{2\a}\mid f\in\cb(\bz^2)\,{\rm with\,\, finite\,\, support}\}$.
\end{prop}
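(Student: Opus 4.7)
The plan is to compute $\om(W(f)^*W(f))$ explicitly and rewrite it as an integral of non-negative quantities against the positive measure $\n$, in the spirit of the positivity argument already used for $\om_\m$ in Proposition~\ref{st1}. First, I would expand $W(f)^*W(f)=W(f^\star *_{2\a}f)$; since $\widecheck{\m_\n}(m,n)=\widecheck\n(m)\,\d_{m+n,0}$, the defining formula of $\om$ restricts to the antidiagonal $n=-m$ and gives
$$\om(W(f)^*W(f))=\sum_m e^{2\pi\imath\a m^2}\,\widecheck\n(m)\,(f^\star *_{2\a}f)(m,-m).$$
Unfolding the twisted convolution at the point $(m,-m)$ produces a double sum over $(M,N)\in\bz^2$ whose phase $e^{-2\pi\imath\a\s((m,-m),(M,N))}$ simplifies, after the standard substitution $\mathbf{A}\mapsto-\mathbf{A}$, to $e^{2\pi\imath\a m(M+N)}$.

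The key structural observation is that the shift $(M,N)\mapsto(M+m,N-m)$ preserves $p:=M+N$. Grouping the double sum by $p$ and introducing the ``slice'' functions $f_p(M):=f(M,p-M)$, the inner sum becomes the autocorrelation
$$R_p(m):=\sum_M\overline{f_p(M)}\,f_p(M+m)=\int_\bt w^{-m}\bigl|G_p(w)\bigr|^2\,\di m(w),$$
where $G_p(w):=\widecheck{f_p}(w)=\sum_M f_p(M)w^M$ is a Laurent polynomial in $w\in\bt$ (since $f$ has finite support). Writing $\widecheck\n(m)=\int_\bt z^m\,\di\n(z)$ and interchanging the finite $m$-sum with the integrals in $z$ and $w$ concentrates everything into an inner exponential sum over $m$.

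The aim is then to use the distributional Fourier identity $\sum_m(u/w)^m=2\pi\,\delta(w-u)$ on $\bt$ — rigorous here because $|G_p(w)|^2$ is a trigonometric polynomial — to collapse the $\di m(w)$-integration onto a single point. Provided the phase $e^{2\pi\imath\a m^2}$ in the definition of $\om$ combines with the convolution phase $e^{2\pi\imath\a m(M+N)}$ and with the $W(m,n)\leftrightarrow U^mV^n$ conversion (in which $W(m,n)=e^{-2\pi\imath\a mn}U^mV^n$ exactly cancels $e^{2\pi\imath\a m^2}$ on the support $n=-m$) to produce only a linear-in-$m$ total phase, the sum localises the integrand at $w=z\,e^{2\pi\imath\a p}$ and yields
$$\om(W(f)^*W(f))=\int_\bt\di\n(z)\sum_p\bigl|G_p\bigl(z\,e^{2\pi\imath\a p}\bigr)\bigr|^2\ge0,$$
which is manifestly non-negative, being the integral of a finite sum of squared moduli against a positive Radon measure.

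The main technical obstacle is precisely this phase-combining step. Taken at face value, the quadratic-in-$m$ factor $e^{2\pi\imath\a m^2}$ would produce, after the interchange of sum and integral, a divergent theta-type kernel $\sum_m e^{2\pi\imath\a m^2}v^m$ that does not localise to a point on $\bt$; the whole proof rests on checking that this quadratic phase is absorbed by the Weyl-form normalisation so that only the desired linear phase $e^{2\pi\imath\a mp}$ survives in the inner sum. Once this cancellation is carried out carefully — which is the delicate core of the computation — positivity follows immediately from the sum-of-squares representation above.
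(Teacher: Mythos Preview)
Your diagnosis of the obstacle is exactly right: the whole proof hinges on disposing of the quadratic phase $e^{2\pi\imath\a m^2}$. But the paper handles this in one stroke rather than by the elaborate slicing-plus-localization machinery you outline. It simply sets $g(m,n):=f(m,n)e^{-2\pi\imath\a mn}$ (i.e.\ passes from the Weyl generators $W(m,n)$ to the monomials $U^mV^n$), observes that on the support $n=-m$ of $\widecheck{\m_\n}$ one has $e^{2\pi\imath\a m^2}=e^{-2\pi\imath\a mn}$, and then writes
\[
\om(W(f)^*W(f))=\sum_{m,n}(g^\star*g)(m,n)\,\widecheck{\m_\n}(m,n)
=\int_{\bt^2}|\check g(z,w)|^2\,\di\m_\n(z,w)\ge 0,
\]
the last step using only that $\m_\n$ is a positive measure on $\bt^2$. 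No distributional Fourier identities, no autocorrelations, no delta-localization.

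Your sentence about the Weyl normalization ``exactly cancelling'' $e^{2\pi\imath\a m^2}$ on $n=-m$ is the germ of this, but you never implement it. If you do, you will see that the correct slice functions are built from $g$, not from $f$: one should take $\tilde G_p(w)=\sum_M g(M,p-M)w^M=\sum_M f(M,p-M)e^{-2\pi\imath\a M(p-M)}w^M$, and then $\int_{\bt^2}|\check g|^2\,\di\m_\n=\int_\bt\di\n(z)\sum_p|\tilde G_p(z)|^2$ directly, with no $e^{2\pi\imath\a p}$ shift. With the $G_p$ you wrote (based on $f$), the quadratic phase does \emph{not} reduce to a purely linear one in $m$; your hoped-for collapse of the inner sum to a Dirac kernel does not occur, and the argument stalls at precisely the point you flagged as delicate. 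So the gap in your proposal is not merely that the ``core computation'' is left undone, but that the computation as you set it up (slices of $f$, shift $ze^{2\pi\imath\a p}$) would not close. The remedy is the paper's substitution $g=e^{-2\pi\imath\a mn}f$, after which the positivity is immediate and your intermediate steps become superfluous.
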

\begin{proof}
For a finitely supported functions $f\in\cb(\bz^2)$, put $g(m,n):=f(m,n)e^{-2\pi\imath\a mn}$. We compute
\begin{align*}
&\om(W(f)^*W(f))=\om(W(f^\star*_{2\a} f))=\sum_{m,n}(g^\star*g)(m,n)\widecheck{\m_\n}(m,n)\\
=&\int_{\bt^2}\left(\widecheck{g^\star*g}\right)(z,w) \di\m_\n(z,w)
=\int_{\bt^2}|\check g(z,w)|^2\di\m_\n(z,w)\geq0.
\end{align*}
\end{proof}

\section{The GNS representation and the modular structure}
\label{gnsmosc}

\noindent
We now proceed to the description of the GNS representation and the associated modular structure, provided that $s(\om_\m)\in Z(\ba_{2\a}^{**})$, relative to the states $\om_\m$ described in the previous section.
\begin{prop}
\label{ggnnss}
The GNS representation $\big(\ch_{\om_\m},\pi_{\om_\m},\xi_{\om_\m}\big)$ associated with the state $\om_\m$ in \eqref{ommu} is given by
\begin{equation}
\label{ggnnss5}
\begin{split}
\ch_{\om_\m}&=\bigoplus_{n\in\bz}L^2(\bt,\m_n),\\
(\pi_{\om_\m}(W(f))g)_n(z)&=\sum_{l\in\bz}\left(\widecheck{f^{(l)}}\circ R^{n-l}\right)(z)\left(g_{n-l}\circ R^{-l}\right)(z),\\
(\xi_{\om_\m})_n(z)&=\d_{n,0},\quad z\in\bt,\,\,n\in\bz.
\end{split}
\end{equation}
\end{prop}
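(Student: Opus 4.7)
The plan is to check that the triple $\big(\bigoplus_{n\in\bz}L^2(\bt,\m_n),\pi,\xi\big)$ defined by the formulas in \eqref{ggnnss5} satisfies the three defining properties of the GNS construction (cyclic vector, vector state equal to $\om_\m$, $*$-representation), so that uniqueness of the GNS triple up to unitary equivalence identifies it with $\big(\ch_{\om_\m},\pi_{\om_\m},\xi_{\om_\m}\big)$. The crucial observation is that, since $(\xi)_n(z)=\d_{n,0}$, the sum over $l$ in the definition of $\pi$ collapses to the single term $l=n$, yielding $(\pi(W(f))\xi)_n(z)=\widecheck{f^{(n)}}(z)$. With this identification, the vectors $\pi(W(f))\xi$, as $f$ runs over finitely supported elements of $\cb(\bz^2)$, have $n$-th components ranging over all trigonometric polynomials on $\bt$, which form a dense set in each $L^2(\bt,\m_n)$; this takes care of cyclicity.

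First I would verify that the vector state induced by $\xi$ coincides with $\om_\m$. A direct computation gives
$$
\langle\pi(W(f))\xi,\xi\rangle=\int_\bt\widecheck{f^{(0)}}(z)\di\m(z)=\sum_{m\in\bz}\widecheck{\m}(m)f(m,0)=\om_\m(W(f)).
$$
More generally, expanding inner products on $\bigoplus_n L^2(\bt,\m_n)$ one gets
$$
\langle\pi(W(f))\xi,\pi(W(g))\xi\rangle=\sum_{n\in\bz}\sum_{m,M\in\bz}f(m,n)\overline{g(M,n)}\,\widecheck{\m_n}(m-M),
$$
and using the change-of-variable identity $\widecheck{\m_n}(k)=e^{2\pi\imath\a nk}\widecheck{\m}(k)$ (immediate from $\m_n=\m\circ R^{-n}$), this expression can be matched term by term with the expansion of $\om_\m(W(g)^*W(f))=\om_\m(W(g^\star*_{2\a}f))$ obtained from the definitions of the twisted convolution and of $\om_\m$.

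Next I would verify that $\pi$ is indeed a $*$-representation of $\ba_{2\a}$. The quickest route exploits the universality of $\ba_{2\a}$: specializing \eqref{ggnnss5} to the generators $U=W(1,0)$ and $V=W(0,1)$ yields the manifestly unitary operators $(\pi(U)g)_n(z)=e^{2\pi\imath\a n}z\,g_n(z)$ and $(\pi(V)g)_n(z)=g_{n-1}(R^{-1}z)$ on $\bigoplus_n L^2(\bt,\m_n)$, and a one-line calculation gives the Weyl commutation relation $\pi(U)\pi(V)=e^{4\pi\imath\a}\pi(V)\pi(U)$, so by the universal property of $\ba_{2\a}$ this extends uniquely to a $*$-representation. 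Equivalently, one may check the algebraic identity
$$
\widecheck{(g*_{2\a}f)^{(n)}}(z)=\sum_{l\in\bz}\widecheck{g^{(l)}}(R^{n-l}z)\,\widecheck{f^{(n-l)}}(R^{-l}z),
$$
which shows directly that \eqref{ggnnss5} implements left multiplication on the dense $*$-subalgebra.

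The main technical obstacle is the careful bookkeeping of the $\a$-dependent phases originating from the cocycle $e^{-2\pi\imath\a\s(\cdot,\cdot)}$ in $*_{2\a}$ and from the pull-back identity for $\widecheck{\m_n}$: both the inner-product identity of step one and the twisted-convolution identity above reduce to such purely combinatorial rearrangements of sums, which are straightforward once the phases are correctly tracked. Once these identities are in place, uniqueness of the GNS construction yields the proposition.
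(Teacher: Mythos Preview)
Your proposal is correct and follows essentially the same route as the paper: identify $(\pi(W(f))\xi)_n=\widecheck{f^{(n)}}$, use this to check cyclicity and that the vector state is $\om_\m$, then invoke uniqueness of the GNS triple. The only minor difference is that the paper establishes the representation property by directly proving the twisted-convolution identity $\widecheck{(f*_{2\a} g)^{(n)}}(z)=\sum_{l}\widecheck{f^{(l)}}(R^{n-l}z)\,\widecheck{g^{(n-l)}}(R^{-l}z)$ from the generators $\d_{k,m}\d_{l,n}$, whereas you lead with the universality argument on $U,V$ and mention that identity as the equivalent alternative; both are fine.
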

\begin{proof}
We proceed first by constructing a representation of $\ba_{2\a}$, and then by showing that it is indeed the GNS representation associated with the state $\om_\m$. To this aim, we start with the computations involving the generators $f(m,n)=\d_{k,m}\d_{l,n}$. In this case, by \eqref{ccrba1} we have
\begin{align*}
(f*_{2\a} g)(m,n)&=\sum_{(p,q)\in\bz^2}\d_{k,p}\d_{l,q}g(m-p,n-q)e^{-2\pi\imath\a(mq-np)}\\
&=g(m-k,n-l)e^{-2\pi\imath\a(ml-nk)},
\end{align*}
and thus
\begin{align*}
\widecheck{(f*_{2\a} g)^{(n)}}(z) &=\sum_{m\in\bz} g(m-k,n-l)e^{-2\pi\imath\a(ml-nk)} z^m\\
&=\sum_{m\in\bz} g^{(n-l)}(m-k)e^{-2\pi\imath\a[(m-k)l-(n-l)k]} z^{m-k} z^k\\
&=\left(\sum_{m\in\bz} g^{(n-l)}(m-k) \big(R^{-l}(z)\big)^{m-k}\right) \big(R^{n-l}(z)\big)^k\\
&=\big(R^{n-l}(z)\big)^{k}\widecheck{g^{(n-l)}}\circ R^{-l}(z).
\end{align*}
By recalling that $\{W(f)\mid f\in\cb(\bz^2)\,\text{with finite support}\}$ is a dense $*$-algebra of $\ba_{2\a}$, for a finitely supported $f(m,n)=\sum_{k,l}f(k,l)\d_{k,m}\d_{l,n}$ this leads to
\begin{equation}
\label{vide}
\begin{split}
\widecheck{(f*_{2\a} g)^{(n)}}(z)
=&\sum_{l\in\bz}\bigg(\sum_{k\in\bz}f(k,l)\big(R^{n-l}(z)\big)^{k}\bigg)\widecheck{g^{(n-l)}}\big(R^{-l}(z)\big)\\
=&\sum_{l\in\bz}\widecheck{f^{(l)}}\big(R^{n-l}(z)\big)\widecheck{g^{(n-l)}}\big(R^{-l}(z)\big).
\end{split}
\end{equation}
By performing the same calculations as in \eqref{1}, we get for any finitely supported $f,g,h\in \cb(\bz^2)$,
$$
\om_\m\big(W(h)^*W(f)W(g)\big)=\sum_{n\in\bz}\int_\bt \widecheck{(f*_{2\a} g)^{(n)}}(z)\overline{\widecheck{h^{(n)}}(z)}\di\m_n(z),
$$
which, together with \eqref{vide}, leads to
$$
\om_\m\big(W(h)^*W(f)W(g)\big)=\sum_{n\in\bz}\int_\bt\di\m_n\bigg(\sum_{l\in\bz}\widecheck{f^{(l)}}\circ R^{n-l}(z)g_{n-l}\circ R^{-l}(z)\bigg)\overline{\widecheck{h^{(n)}}(z)}.
$$
The computation in \eqref{vide} also yields the following two facts. First, the cyclicity of $\xi_{\om_\m}$:
$$
(\pi_{\om_\m}(W(f))\xi_{\om_\m})_n(z)=\sum_{l\in\bz}\widecheck{f^{(l)}}\big(R^{n-l}(z)\big)\d_{n,l}=\widecheck{f^{(n)}}(z),
$$
that is $\{\pi_{\om_\m}(W(f))\xi_{\om_\m}\mid f\in\cb(\bz^2)\,\text{with finite support}\}$ is dense in the Hilbert space $\bigoplus_{n\in\bz}L^2(\bt,\m_n)$.
Second, the vector state given by $\xi_{\om_\m}$ is precisely $\om_\m$:
\begin{align*}
\big\langle\pi_{\om_\m}(W&(f))\xi_{\om_\m},\xi_{\om_\m}\big\rangle
=\sum_{n\in\bz}\int_\bt\widecheck{f^{(n)}}(z)\d_{n,0}\di\m_n(z)
=\int_\bt\widecheck{f^{(0)}}(z)\di\m(z)\\
=&\int_\bt\di\m(z)\sum_{m\in\bz}f(m,0)z^m
=\sum_{m\in\bz}f(m,0)\int_\bt z^m\di\m(z)\\
=&\sum_{m\in\bz}f(m,0)\widecheck{\m}(m)
=\om_\m\big(W(f)\big).
\end{align*}
We now set $\ch_{\om_\m}=\bigoplus_{n\in\bz}L^2(\bt,\m_n)$, and denote by $A_{\om_\m}$ the element $A\in\ba_{2\a}$ viewed as a vector in $\ch_{\om_\m}$, {\it i.e.} we identify $W(f)_{\om_\m}$ with $\bigoplus_{n\in\bz} \widecheck{f^{(n)}} \in \ch_{\om_\m}$. 

Finally, we look at the multilinear form on $\ch_{\om_\m}$ given by
$$
\big\langle\pi_{\om_\m}(W(f))W(g)_{\om_\m},W(h)_{\om_\m}\big\rangle=\om_\m\big(W(h)^*W(f)W(g)\big).
$$
The statement then follows by uniqueness, up to unitary equivalence, of the GNS representation.
\end{proof}
Suppose that $\m$ satisfies the conditions of Proposition \ref{st1}. This means that $\m_{-n}\sim\m_{n}$, $n\in\bz$. By setting
$$
\di\m_{-n}(z)=h_n(z)\di\m_{n}(z)
$$
for the corresponding Radon-Nikodym derivative, we then have 
$$
h_{-n}(z)=h^{-1}_{n}(z),\quad n\in\bz,
$$
$\m_n$-almost everywhere.

In this case, the embedding of $\ba_{2\a}$ in $\ch_{\om_\m}$ is faithful. Moreover, since $W(f_k)_{\om_\m} \to 0$ implies $\bigoplus_{n\in\bz}\widecheck{f^{(n)}_k} \to 0$ in $\ch_{\om_\m}$ (see the proof of Proposition \ref{st1}), it easily follows that, on elements $x\in\ch_{\om_\m}$ of the form $x_n(z)=\widecheck{f^{(n)}}(z)$, the map $S_o:W(f)\to W(f^\star)$ given by
$$
(S_ox)_n(z)=\overline{x_{-n}(z)},
$$
is closable. Denoting its closure by $S$, we have $S=J\D^{1/2}$.
\begin{thm}
\label{mod}
Suppose that for $\m\in\cs(C(\bt))$, $\m_{-n}\sim\m_{n}$ with Radon-Nikodym derivative $\frac{\di\m_{-n}}{\di\m_{n}}=h_n$,
$n\in\bz$. Then we have for the corresponding Tomita modular operator $\D$ and conjugation $J$, 
$$
\cd_\D=\bigg\{x\in\ch_{\om_\m} \mid\sum_{n\in\bz}\int_\bt|h_n(z)x_{n}(z)|^2\di\m_n(z)<+\infty\bigg\},
$$ 
and
\begin{equation*}
(\D x)_n(z)=h_n(z)x_{n}(z), \quad(Jx)_n(z)=h_n^{1/2}(z)\overline{x_{-n}(z)}.
\end{equation*}
\end{thm}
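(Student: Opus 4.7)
The plan is to guess the explicit forms of $\D$ and $J$ and identify them via uniqueness of the polar decomposition of $S$. I introduce the candidate multiplication operator $\tilde\D$ on $\ch_{\om_\m}=\bigoplus_{n\in\bz}L^2(\bt,\m_n)$ by $(\tilde\D x)_n(z):=h_n(z)x_n(z)$ on the natural maximal domain $\cd_{\tilde\D}=\big\{x\mid\sum_{n\in\bz}\int_\bt|h_n(z)x_n(z)|^2\di\m_n(z)<+\infty\big\}$, together with the candidate conjugation $\tilde J$ defined everywhere on $\ch_{\om_\m}$ by $(\tilde J x)_n(z):=h_n^{1/2}(z)\overline{x_{-n}(z)}$. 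Iterating $\di\m_{-n}=h_n\di\m_n$ gives $\di\m_n=h_{-n}h_n\di\m_n$, so $h_nh_{-n}=1$ almost everywhere, which yields $\tilde J^2=I$ at once. The change of variables $n\to -n$ combined with $h_n\di\m_n=\di\m_{-n}$ shows $\tilde J$ is isometric, hence antiunitary. As a direct sum of multiplications by strictly positive measurable functions, $\tilde\D$ is positive selfadjoint on its stated domain.

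Using $h_nh_{-n}=1$ once more, one computes $(\tilde J\tilde\D^{1/2}x)_n(z)=h_n^{1/2}(z)h_{-n}^{1/2}(z)\overline{x_{-n}(z)}=\overline{x_{-n}(z)}$, so $\tilde J\tilde\D^{1/2}$ agrees with $S_o$ on every Weyl form vector $W(f)_{\om_\m}$; such vectors lie in $\cd_{\tilde\D^{1/2}}$ because each component $\widecheck{f^{(n)}}$ is a bounded trigonometric polynomial and only finitely many are non-zero. Since $\tilde\D^{1/2}$ is selfadjoint and $\tilde J$ is antiunitary (hence bounded and invertible), $\tilde J\tilde\D^{1/2}$ is closed, and therefore extends the closure $S$ of $S_o$. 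To obtain equality it suffices to verify that the Weyl form vectors form a core for $\tilde\D^{1/2}$, since then uniqueness of the polar decomposition forces $J=\tilde J$ and $\D=\tilde\D$ with the stated formulas.

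For the core property, given $x\in\cd_{\tilde\D^{1/2}}$ I would proceed in three truncation stages. First restrict the support to $|n|\leq N$; then replace each remaining component by $x_n\chi_{\{|x_n|\leq M\}}$, which converges to $x_n$ simultaneously in $L^2(\bt,\m_n)$ and in $L^2(\bt,\m_{-n})=L^2(\bt,h_n\m_n)$ by dominated convergence, dominated respectively by $|x_n|^2$ and $h_n|x_n|^2$; finally approximate each bounded component by trigonometric polynomials in the single finite Borel measure $\m_n+\m_{-n}$, in which continuous functions are dense by Lusin's theorem and trigonometric polynomials are uniformly dense in continuous functions by Stone-Weierstrass. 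This produces simultaneous convergence in $L^2(\m_n)$ and $L^2(\m_{-n})$, which is precisely graph norm convergence for $\tilde\D^{1/2}$, and a diagonal argument assembles everything into a single sequence of Weyl form vectors.

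The main technical obstacle is precisely this core argument: since $h_n$ is generically only $L^1(\m_n)$ and not $L^\infty$, approximation in $L^2(\m_n)$ alone does not control the weighted norm $L^2(h_n\m_n)$, so trigonometric approximation against $\m_n$ is insufficient by itself. The device of first cutting off the support in $n$, then truncating each fiber pointwise (to pass from the singular space $\cd_{\tilde\D^{1/2}}$ to bounded functions), and only then approximating in the summed measure $\m_n+\m_{-n}$ is what forces the two convergences to take place simultaneously and delivers the desired core property.
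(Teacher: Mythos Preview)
Your proof is correct and takes a genuinely different route from the paper's. The paper computes $S^*$ directly via the antilinear adjoint relation $\langle S^*x,y\rangle=\langle S_oy,x\rangle$ on Weyl form vectors, reads off $(S^*)_n=M_{h_n}S_n$, and then obtains $\D$ from $S^2\subset I$ and $J$ from $J\supset S\D^{-1/2}$; domain questions are essentially left implicit. You instead guess the pair $(\tilde J,\tilde\D)$, check that $\tilde J\tilde\D^{1/2}$ is a closed extension of $S_o$, and then invest effort in a core argument. The key observation you exploit, that the graph norm of $\tilde\D^{1/2}$ on the $n$-th fibre is exactly the $L^2(\m_n+\m_{-n})$ norm, makes the three-stage truncation transparent and yields a clean, fully rigorous identification of $\D$ on its maximal domain. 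What the paper's approach buys is brevity and the avoidance of any density or core argument; what yours buys is an explicit verification of the domain claim in the statement, which the paper only asserts. One minor remark: once you have truncated to finitely many indices $n$, the final assembly is just a finite sum of fibrewise approximations, so the diagonal argument you mention at the end is not actually needed.
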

\begin{proof}
We start by determining $S^*$. On elements of the form $x_n=\widecheck{f^{(n)}}$, $y_n=\widecheck{g^{(n)}}$ for $W(f), W(g)\in\ba_{2\a}$, we compute
\begin{align*}
\langle S^*x,y\rangle=&\langle S_oy,x\rangle=\sum_{n\in\bz}\int_\bt\overline{x_n(z)y_{-n}(z)}\di\m_n(z)\\
=&\sum_{n\in\bz}\int_\bt\overline{y_n(z)x_{-n}(z)}\di\m_{-n}(z)\\
=&\sum_{n\in\bz}\int_\bt h_n(z)\overline{x_{-n}(z)y_n(z)}\di\m_{n}(z)\\
=&\sum_{n\in\bz}\int_\bt h_n(z)(S_ox)_n(z)\overline{y_n(z)}\di\m_{n}(z).
\end{align*}
This leads to
$$
(S^*)_n=M_{h_n}S_n,\quad n\in\bz
$$
which, combined with $S^2\subset I$ ({\it cf.} 10.1 in \cite{SZ}), yields
$$
\D_n=M_{h_n},\quad n\in\bz.
$$
Concerning the conjugation $J$, by taking into account that $J\supset S\D^{-1/2}$ we easily compute for elements $x\in\ch_{\om_\m}$ as above,
$$
(J x)_n(z)=\big(S(\D^{-1/2}x)\big)_n(z)=h_{-n}^{-1/2}(z)\overline{x_{-n}(z)}=h_{n}^{1/2}(z)\overline{x_{-n}(z)}.
$$
\end{proof}
Concerning the spectrum of the modular operator, denote by $\car_{\rm ess}(h_n)$ the essential range of the measurable function $h_n$ w.r.t. the measure $\m_n$. By Proposition VIII.3.1 in \cite{RS}, we easily get
$$
\s(\D)=\overline{\bigcup_{n\in\bn}\left(\car_{\rm ess}(h_n)\bigcup1/\car_{\rm ess}(h_{n})\right)}.
$$
As a simple application that covers also some examples dealt with in \cite{CM}, we consider the particular case corresponding to inner perturbations of the trace. In particular, set $\f:=\t(W(k)\,{\bf\cdot}\,)$ with $k(m,n)=K(m)\d_{n,0}$. Notice that, in the language of Section \ref{secstat}, $\t=\om_{\m^\t}$, where for each $n\in\bz$, $\m_n^\t=m$, with $m$ the normalised Lebesgue measure on the unit circle. A simple application of \eqref{ggnnss5} to
$$
\f(W(h)^*W(f))=\t((W(h)^*W(f)W(k)),
$$
with $g_n=W(k)_\t\equiv\pi_\t(W(k))$, yields $\f=\om_{\m^\f}$ with the corresponding measures $\m^\f_n$ given by 
$$
\di\m^\f_n=\widecheck{K}\circ R^{-n}\di m,\quad n\in\bz.
$$

\section{Type $\ty{III}$ representations}
\label{t3rp}

\noindent
In order to find out type $\ty{III}$ representations, we restrict the matter to Liouville numbers. 
Thus, we fix an irrational number $\a\in(0,1/2)$, tacitly assuming that it will be a Liouville one. Our analysis also applies to the diophantine case, providing type $\ty{II_1}$ representations which are then equivalent to that of the trace.

With $R$ the rotation of the angle $2\pi\a$, we consider an orientation preserving diffeomorphism $\mathpzc{f}$ with $\r(\mathpzc{f})=2\a$. As $\mathpzc{f}=\mathpzc{h}_\mathpzc{f}\circ R^2\circ\mathpzc{h}_\mathpzc{f}^{-1}$ for a unique homeomorphism $\mathpzc{h}_\mathpzc{f}$ such that $\mathpzc{h}_\mathpzc{f}(1)=1$, we define $T:=\mathpzc{h}_\mathpzc{f}\circ R\circ \mathpzc{h}_\mathpzc{f}^{-1}$. As explained in Lemma \ref{equ}, $T^2=\mathpzc{f}$, and we can set $\mathpzc{h}_\mathpzc{f}\equiv \mathpzc{h}_{T^2}$. Notice that, contrarily to its square $\mathpzc{f}$, $T$ is merely a non smooth homeomorphism for all the non type 
$\ty{II_1}$ cases treated in the present paper. 

We also consider the measure $\m:=m\circ\mathpzc{h}_{T^2}$.
Concerning the sequence of measures $\m_n=\m\circ R^{-n}$, we have the following
\begin{lem}
\label{qihk}
For each $n\in\bz$, we have $\m_{-n}\sim\m_{n}$ with
$$
\frac{\di\m_{-n}}{\di\m_{n}}(z)=\frac{T^{-n}(\mathpzc{h}_{T^2}(z))(DT^{2n})(T^{-n}(\mathpzc{h}_{T^2}(z)))}{T^{n}(\mathpzc{h}_{T^2}(z))},\quad z\in\bt.
$$
\end{lem}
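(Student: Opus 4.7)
The plan is to reduce everything to the formula \eqref{rcanz}, which is only stated for $C^1$-diffeomorphisms, by exploiting the fact that the even powers $T^{2n}=\mathpzc{f}^n$ are honest $C^1$-diffeomorphisms even though $T$ itself is only a homeomorphism.

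First I would rewrite both $\m_n$ and $\m_{-n}$ as pushforwards of $m$. From $T=\mathpzc{h}_{T^2}\circ R\circ\mathpzc{h}_{T^2}^{-1}$ one gets $\mathpzc{h}_{T^2}\circ R^{-n}=T^{-n}\circ\mathpzc{h}_{T^2}$, so
$$
\m_n=\m\circ R^{-n}=m\circ\mathpzc{h}_{T^2}\circ R^{-n}=m\circ T^{-n}\circ\mathpzc{h}_{T^2},
$$
and analogously $\m_{-n}=m\circ T^{n}\circ\mathpzc{h}_{T^2}$. Thus both measures are obtained by transporting $m$ through the homeomorphism $\Phi_{\pm n}:=\mathpzc{h}_{T^2}^{-1}\circ T^{\pm n}$.

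Next I would compute $\frac{\di\m_{-n}}{\di\m_n}$ by testing against $h\in C(\bt)$. Writing $\int h\,\di\m_{-n}=\int h(\mathpzc{h}_{T^2}^{-1}(T^{-n}(y)))\,\di m(y)$, I would perform the change of variable $y=\mathpzc{f}^n(x)=T^{2n}(x)$. This is the key step, and it is legal precisely because $\mathpzc{f}^n$ is a $C^1$-diffeomorphism (even though $T$ itself is not smooth). Using \eqref{rcanz} with $\mathpzc{f}^n$ in place of $\mathpzc{f}^{-n}$ produces the Jacobian factor $\frac{x\,D\mathpzc{f}^n(x)}{\mathpzc{f}^n(x)}$, and simplifying the argument via $T^{-n}\circ T^{2n}=T^n$ yields
$$
\int h\,\di\m_{-n}=\int h(\mathpzc{h}_{T^2}^{-1}(T^n(x)))\,\frac{x\,D\mathpzc{f}^n(x)}{\mathpzc{f}^n(x)}\,\di m(x).
$$

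Finally, I would recognize the right-hand side as an integral against $\m_n=(\Phi_n)_*m$ by undoing the transport via $x=\Phi_n^{-1}(z)=T^{-n}(\mathpzc{h}_{T^2}(z))$. This gives
$$
\frac{\di\m_{-n}}{\di\m_n}(z)=\frac{T^{-n}(\mathpzc{h}_{T^2}(z))\,D\mathpzc{f}^n\bigl(T^{-n}(\mathpzc{h}_{T^2}(z))\bigr)}{\mathpzc{f}^n\bigl(T^{-n}(\mathpzc{h}_{T^2}(z))\bigr)},
$$
and substituting $D\mathpzc{f}^n=DT^{2n}$ together with $\mathpzc{f}^n\circ T^{-n}=T^{2n}\circ T^{-n}=T^n$ gives exactly the claimed formula. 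Equivalence $\m_{-n}\sim\m_n$ is automatic: the numerator involves $DT^{2n}=D\mathpzc{f}^n>0$ (derivative of an orientation-preserving diffeomorphism), while the remaining factors lie on $\bt$ and are therefore nonzero.

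The only real subtlety — and the obstacle one must be careful about — is the non-smoothness of $T$. A naive application of \eqref{rcanz} to $T^{\pm n}$ would be illegitimate; the trick is that after the algebraic manipulation $m\circ T^n=m\circ T^{-n}\circ T^{2n}=m\circ T^{-n}\circ\mathpzc{f}^n$, all derivatives that appear are derivatives of $\mathpzc{f}^n$, which is smooth. Everything else is routine pushforward bookkeeping, and no regularity beyond $C^1$ of $\mathpzc{f}$ is used.
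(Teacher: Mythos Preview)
Your proposal is correct and follows essentially the same approach as the paper: both arguments rewrite $\m_{\pm n}=m\circ T^{\mp n}\circ\mathpzc{h}_{T^2}$ via the conjugacy relation, then invoke \eqref{rcanz} for the smooth map $T^{2n}=\mathpzc{f}^n$ (not for $T$ itself) to extract the Jacobian, and finally transport back through $\mathpzc{h}_{T^2}$. The paper organises this as a two-step computation (first $\frac{\di m_{-n}}{\di m_n}$, then compose with $\mathpzc{h}_{T^2}$), whereas you carry out a single change of variables, but the substance is identical and you have correctly identified the only genuine subtlety.
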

\begin{proof}
Since we know that $m\circ T^{n}\sim m\circ T^{-n}$ for each $n\in\bz$, we have
\begin{align*}
\m_{-n}\equiv&\m\circ R^n=m\circ \mathpzc{h}_{T^2}\circ R^n=m\circ T^n\circ \mathpzc{h}_{T^2}\\
\sim&m\circ T^{-n}\circ \mathpzc{h}_{T^2}\equiv m_n\circ \mathpzc{h}_{T^2}=\m_n,
\end{align*}
hence $\m_{-n}\sim\m_{n}$.

Concerning the corresponding Radon-Nikodym derivative, by \eqref{rcanz} we have for each $n\in\bz$ and $z\in\bt$,
$$
\frac{\di m_{-n}}{\di m_{n}}(z)=\frac{\di m_{-2n}}{\di m}\big(T^{-n}(z)\big)=\frac{T^{-n}(z)(DT^{2n})(T^{-n}(z))}{T^{n}(z)}.
$$
We then compute
\begin{align*}
\di(\m\circ R^n)=&\di(m\circ \mathpzc{h}_{T^2}\circ R^n)=\di(m\circ T^n\circ \mathpzc{h}_{T^2})\\
=&\frac{\di m_{-n}}{\di m_{n}}\circ \mathpzc{h}_{T^2}\,\di(m\circ T^{-n}\circ \mathpzc{h}_{T^2})\\
=&\frac{\di m_{-n}}{\di m_{n}}\circ \mathpzc{h}_{T^2}\,\di(m\circ \mathpzc{h}_{T^2}\circ R^{-n})\\
=&\frac{\di m_{-n}}{\di m_{n}}\circ \mathpzc{h}_{T^2}\,\di(\m\circ R^{-n}),
\end{align*}
and the thesis follows.
\end{proof}

In order to determine the type of the representation $\pi_{\om_\m}$, we use another representation, unitarily equivalent to the GNS one in Proposition \ref{ggnnss}, which is more convenient for our purpose. Indeed, for the Hilbert space
$$
\ck:=\ell^2\big(\bz; L^2(\bt,\m)\big)\cong\bigoplus_{n\in\bz}L^2(\bt,\m),
$$
we define the operator $V:\ch_{\om_\m}\to\ck$ as
$$
(Vg)_n(z):=(g_n\circ R^{n})(z),\quad n\in\bz,\,\,z\in\bt.
$$
It is immediate to see that $V$ is unitary with inverse $V^*:\ck\to\ch_{\om_\m}$ given by
$$
(V^*g)_n(z)=(g_n\circ R^{-n})(z),\quad n\in\bz,\,\,z\in\bt.
$$
We can check that, after performing the above unitary equivalence in \eqref{ggnnss5}, the GNS representation can be also written as
\begin{equation}
\label{ggnnss565}
\begin{split}
\ch_{\om_\m}&=\ell^2\big(\bz;L^2(\bt,\m)\big),\\
(\pi_{\om_\m}(W(f))g)_n(z)&=\sum_{l\in\bz}\left(\widecheck{f^{(l)}}\circ R^{2n-l}\right)(z)g_{n-l}(z),\\
(\xi_{\om_\m})_n(z)&=\d_{n,0},\quad z\in\bt,\,\,n\in\bz.
\end{split}
\end{equation}
The modular operator and the modular conjugation associated with the state $\om_\m$, necessarily of central support ({\it cf.} Proposition \ref{st1}), such as that described in Theorem \ref{mod}, are then given by
\begin{equation}
\label{ggnnss765}
\begin{split}
(\D x)_n(z)=&(h_n\circ R^n)(z)x_{n}(z),\\
(Jx)_n(z)=&(h_n\circ R^n)^{1/2}(z)\overline{(x_{-n}\circ R^{2n})(z)},
\end{split}
\end{equation}
for each $n\in\bz$ and $z\in\bt$.

We now check that $\pi_{\om_\m}(\ba_{2\a})''$ is indeed isomorphic to a crossed product. 
\begin{prop}
\label{crprac}
With the above notations, we have 
$$
\pi_{\om_\m}(\ba_{2\a})''\sim L^\infty(\bt,\m)\ltimes_{R_{2\a}}\bz.
$$
\end{prop}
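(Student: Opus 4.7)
The plan is to exploit the explicit form \eqref{ggnnss565} of the GNS representation to recognise directly the covariant pair underlying the crossed product construction. Setting $U := \pi_{\om_\m}(W(1,0))$ and $V := \pi_{\om_\m}(W(0,1))$, a direct substitution in \eqref{ggnnss565} gives
\begin{equation*}
(Ug)_n(z) = R^{2n}(z)\,g_n(z), \qquad (Vg)_n(z) = g_{n-1}(z),
\end{equation*}
so that $V$ is the bilateral shift along $\bz$ and $U$ is a ``twisted'' multiplication operator acting on the $n$-th fibre by multiplication by $R^{2n}(z)$. The Weyl relation $UV = e^{4\pi\imath\a}VU$ is immediate, and one checks that $VUV^* = e^{-4\pi\imath\a}U$, which encodes the action of $R_{2\a}^{-1}$ on the multiplication operator $M_z$.

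Next, since $\mathpzc{f} = T^2$ is a diffeomorphism of $\bt$, the same computation as in Lemma \ref{qihk} yields $\m\circ R^{2n}\sim\m$ for every $n\in\bz$, so that $R_{2\a}$ preserves the measure class $[\m]$ and the crossed product in the statement is well defined; moreover $\m$ has full support on $\bt$, since $\mathpzc{h}_{T^2}$ is a homeomorphism and $m$ is Haar. I would then identify $W^*(U)$ with the canonical copy $\pi(L^\infty(\bt,\m))$ sitting in the standard realisation of the crossed product on $\ell^2(\bz;L^2(\bt,\m))$. Decomposing $U=\bigoplus_n U_n$ with $U_n = e^{4\pi\imath n\a}M_z$ on $L^2(\bt,\m)$, the Borel functional calculus yields $f(U) = \bigoplus_n M_{f\circ R^{2n}}$ for every $f\in L^\infty(\bt,\m)$. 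Restricting to the cyclic subspace $W^*(U)\xi_{\om_\m}\cong L^2(\bt,\m)$ shows that the spectral measure class of $U$ is exactly $[\m]$, so the map $f\mapsto f(U)$ is an isomorphism of $L^\infty(\bt,\m)$ onto $W^*(U)$, realising at the same time the canonical embedding of $L^\infty(\bt,\m)$ into the crossed product.

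To conclude, I would observe that the norm-dense $*$-subalgebra of $\ba_{2\a}$ consisting of finitely supported Weyl sums is generated by the two unitaries $W(1,0)$ and $W(0,1)$, because $W(m,n)=e^{-2\pi\imath\a mn}U^mV^n$. Hence $\pi_{\om_\m}(\ba_{2\a})'' = W^*(U,V) = W^*\big(\pi(L^\infty(\bt,\m)),V\big)$. Since $L^\infty(\bt,\m)$ is represented in maximal abelian form on $L^2(\bt,\m)$ and $V$ is the bilateral shift implementing $R_{2\a}^{-1}$, the pair $(\pi,V)$ constitutes the standard covariant representation of $\big(L^\infty(\bt,\m),\bz,R_{2\a}\big)$, and the von Neumann algebra it generates coincides by construction with $L^\infty(\bt,\m)\ltimes_{R_{2\a}}\bz$ (the direction of the implemented $\bz$-action being absorbed by the trivial identification $L^\infty(\bt,\m)\ltimes_{R_{2\a}^{-1}}\bz \cong L^\infty(\bt,\m)\ltimes_{R_{2\a}}\bz$ via $V\leftrightarrow V^*$).

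The delicate step is the identification $W^*(U)\cong\pi(L^\infty(\bt,\m))$: one must verify that the spectral type of $U$ on the cyclic subspace $W^*(U)\xi_{\om_\m}$ lies in the class of $\m$, and that applying the Borel functional calculus to $U$ globally produces exactly the ``diagonal'' operators $\bigoplus_n M_{f\circ R^{2n}}$ parametrised by a single $f\in L^\infty(\bt,\m)$; this is precisely what distinguishes the canonical copy of $L^\infty(\bt,\m)$ from the potentially larger commutant algebra $\ell^\infty\big(\bz;L^\infty(\bt,\m)\big)$ of arbitrary block-diagonal multiplication operators.
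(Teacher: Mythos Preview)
Your proof is correct and follows essentially the same strategy as the paper: identify the abelian subalgebra and the unitary shift inside $\pi_{\om_\m}(\ba_{2\a})$, check the covariance relation, and recognise the resulting pair as the regular covariant representation of the $W^*$-dynamical system $(L^\infty(\bt,\m),\bz,R_{2\a})$ on $\ell^2(\bz;L^2(\bt,\m))$. The only substantive difference is in how the abelian part is pinned down: the paper works with the whole dense $*$-subalgebra $\ca=\{W(f)\mid f(m,n)=F(m)\d_{n,0}\}$ of trigonometric polynomials, observes directly that $\pi_{\om_\m}(W(f))_n=M_{\widecheck{F}\circ R^{2n}}$, and asserts $\pi_{\om_\m}(\ca)''\sim L^\infty(\bt,\m)$; you instead isolate the single generator $U=\pi_{\om_\m}(W(1,0))$ and use the Borel functional calculus together with the computation of the spectral measure class of $U$ on the cyclic vector to obtain $W^*(U)\cong L^\infty(\bt,\m)$. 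Your route is slightly more careful on exactly the point the paper leaves implicit (why the weak closure is $L^\infty(\bt,\m)$ rather than something larger or smaller), while the paper's formulation makes the match with the crossed-product picture $\pi(H)_n=M_{H\circ R^{2n}}$ more immediately visible; in substance the two arguments coincide.
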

\begin{proof}
Consider
\begin{equation}
\label{casca}
\ca:=\big\{W(f)\mid f(m,n)=F(m)\d_{n,0},\, F\in\cb(\bz)\,{\rm with\,\,finite\,\,support}\big\}.
\end{equation}
Notice that $\ca$ is a $*$-algebra which can be viewed as a $*$-algebra of continuous functions, norm dense in the copy of $C(\bt)$ canonically embedded in $\ba_{2\a}$, and weakly dense in 
$\pi_{\om_\m}(\ca)''\sim L^\infty(\bt,\m)$. For $f(m,n)=F(m)\d_{n,0}$ as above, $\widecheck{f^{(n)}}=H\d_{n,0}$ where $H:=\widecheck{F}\in C(\bt)$. 

Consider now the natural action $\r$ on functions $f\in L^\infty(\bt,\m)$, dual of the rotation of $4\pi\a$, given by
$$
\r(f)(z):=f\circ R^{-2}(z),\quad z\in\bz.
$$
By \eqref{ggnnss565}, it is immediate to show that $C(\bt) \subset L^\infty(\bt,\m)$ are both naturally embedded in $\pi_{\om_\m}(\ba_{2\a})''$ as the multiplication operator by the functions
$$
\pi(H)_n:=M_{H\circ R^{2n}},\quad n\in\bz.
$$
Concerning the action of $\bz$, we pick $h_k(m,n):=\d_{m,0}\d_{k,n}$ and put
$$
\l_k:=\pi_{\om_\m}(W(h_k))\,, \quad k\in\bz.
$$
Such unitary operators act on $\ell^2\big(\bz;L^2(\bt,\m)\big)$ as the $k$-step shift:
$$
(\l_kg)_n=g_{n-k},\quad k,n\in\bz.
$$
Obviously, $\l_k=\l^k$, $k\in\bz$, and $\l^{-1}=\l^*$, where $\l$ is the one-step shift
\begin{equation}
\label{lash}
(\l g)_n=g_{n-1},\quad n\in\bz.
\end{equation}
Then $\pi(\ca)$ and $\{\l_k\mid k\in\bz\}$ generate $\ba_{2\a}$ as a $C^*$-algebra, and $\pi_\om(\ba_{2\a})''$ as a von Neumann algebra. In addition, since the crossed product condition (see {\it e.g.} Section 2.7.1 of \cite{BR})
$$
\l_k\pi(H)\l_k^*=\pi(\r^{k}(H)),\quad H\in L^\infty(\bt,\m),\,\,k\in\bz
$$
is easily verified, the result follows.
\end{proof}
 Since $\ba_{2\a}$ is a simple $C^*$-algebra, we recover the representation-independent well known fact that $\ba_{2\a}\sim C(\bt)\ltimes_{R_{2\a}}\bz$, where we have denoted with an abuse of notation, by $R_{2\a}$  the action $\b$ on functions corresponding to the rotation by the angle $4\pi\a$.

The main properties of the representation $\pi_{\om_\m}$ are then described in the following
\begin{thm}
\label{thmcz}
For $\a\in(0,1/2)\backslash\bq$, let $\mathpzc{f}$ be an orientation preserving diffeomorphism of $\bt$ satisfying {\rm{\bf(A)}} in Section \ref{sec2} w.r.t. the rotation $R_{2\a}$ by the angle $4\pi\a$. Consider the measure $\m=m\circ \mathpzc{h}_\mathpzc{f}$, together with the state $\om_\m\in\cs(\ba_{2\a})$ given in \eqref{ommu}. 

Then $\pi_{\om_\m}(\ba_{2\a})''$ is a hyperfinite factor acting in standard form on $\ch_{\om_\m}$, whose type \emph{(}necessarily $\ty{II_1}$, $\ty{II_\infty}$, or $\ty{III_\l}$, $\l\in[0,1]$\emph{)} is determined by the ratio set $r([\m],R_{2\a})$ \emph{(}or equivalently by $r([m],\mathpzc{f})$\emph{)}, provided it is not of type $\ty{II_1}$. Furthermore, for $\ca$ given in \eqref {casca}, $\pi_{\om_\m}(\ca)''$ is maximal abelian in $\pi_{\om_\m}(\ba_{2\a})''$.
\end{thm}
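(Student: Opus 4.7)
The plan is to combine Proposition \ref{crprac} with the standard theory of crossed products by free ergodic quasi-invariant actions of $\bz$, and then to read off the type of the resulting factor from the Krieger-Araki-Woods ratio set.

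First I would verify that $R_{2\a}$ generates a free, quasi-invariant and ergodic action on $(\bt,\m)$. Quasi-invariance is contained in Lemma \ref{qihk} and follows directly from $\m\circ R_{2\a}=m\circ\mathpzc{f}\circ\mathpzc{h}_\mathpzc{f}\sim m\circ\mathpzc{h}_\mathpzc{f}=\m$, since $m\circ\mathpzc{f}^n\sim m$ for every $n\in\bz$. Freeness is automatic because $\a$ is irrational, so every nonzero iterate of $R_{2\a}$ acts on $\bt$ without fixed points. Ergodicity follows from the intertwining $R_{2\a}=\mathpzc{h}_\mathpzc{f}^{-1}\circ\mathpzc{f}\circ\mathpzc{h}_\mathpzc{f}$, which reduces ergodicity of $R_{2\a}$ on $(\bt,\m)$ to that of $\mathpzc{f}$ on $(\bt,m)$; the latter is available by Theorem 12.7.2 in \cite{KH}.

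Given these ingredients, I would apply Proposition \ref{crprac} to identify $\pi_{\om_\m}(\ba_{2\a})''$ with $L^\infty(\bt,\m)\ltimes_{R_{2\a}}\bz$. By the standard theory of crossed products by free ergodic actions (see e.g.\ \cite{T1, SZ}), the resulting von Neumann algebra is a factor in which the abelian subalgebra $L^\infty(\bt,\m)\simeq\pi_{\om_\m}(\ca)''$ is maximal abelian (a Cartan subalgebra), which yields both factoriality and the maximal abelian property claimed in the statement. Since $\bz$ is amenable, the factor is hyperfinite. It acts in standard form on $\ch_{\om_\m}$ because, by Proposition \ref{st1} combined with Proposition \ref{ccbl}, the vector $\xi_{\om_\m}$ is both cyclic and separating, and the modular objects described in Theorem \ref{mod}, together with the natural positive cone $\overline{\D^{1/4}\pi_{\om_\m}(\ba_{2\a})_+\xi_{\om_\m}}$, implement the standard form.

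To identify the type, I would invoke the equality recalled in Section \ref{5dfm}, that for a free ergodic action of $\bz$ on a standard Lebesgue measure space one has $\ss\big(L^\infty(\bt,\m)\ltimes_{R_{2\a}}\bz\big)=r([\m],R_{2\a})$, see \cite{K1, K2a, K3}. The equality $r([\m],R_{2\a})=r([m],\mathpzc{f})$ is then an immediate consequence of the fact that $\mathpzc{h}_\mathpzc{f}$ is a Borel isomorphism preserving the measure class and conjugating the two dynamical systems, combined with the fact that the ratio set is a conjugacy invariant of the measured orbit equivalence relation. Consequently, when the type is not $\ty{II_1}$, it must be $\ty{II_\infty}$ or $\ty{III_\l}$, $\l\in[0,1]$, as prescribed by $r([m],\mathpzc{f})$. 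The main delicate point, in my opinion, lies in the careful verification that the Krieger-Araki-Woods ratio set is transported correctly through the conjugacy $\mathpzc{h}_\mathpzc{f}$, particularly in the Liouville case of interest where $\m\perp m$ and $\mathpzc{h}_\mathpzc{f}$ is merely a homeomorphism; everything else reduces to standard crossed product theory for amenable discrete group actions.
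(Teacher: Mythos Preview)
Your proposal is correct and follows essentially the same route as the paper: identify the bicommutant with the crossed product via Proposition \ref{crprac}, then apply the standard theory of free ergodic $\bz$-actions on nonatomic Lebesgue spaces to obtain factoriality, the maximal abelian property of $\pi_{\om_\m}(\ca)''$, hyperfiniteness from amenability of $\bz$, standard form from Proposition \ref{st1}, and finally the type from the identification of the Connes invariant $\ss$ with the ratio set. The only minor difference is your argument for freeness (no fixed points of an irrational rotation) versus the paper's appeal to the fact that an ergodic $\bz$-action on a nonatomic space is free; your argument is more direct for this particular case, and your explicit remark that the ratio set is transported through the Borel conjugacy $\mathpzc{h}_\mathpzc{f}$ fills in a step the paper leaves implicit.
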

\begin{proof}
We already noticed that the Lebesgue measure $m$ is quasi invariant and ergodic for the natural action of $\mathpzc{f}$ on $\bt$. Consequently, $\m$ is also quasi invariant and ergodic for the rotation $R^2$ by the angle $4\pi\a$ on $\bt$. By Lemma \ref{qihk}, $\m$ satisfies the condition of Proposition \ref{st1}, and thus $\pi_{\om_\m}(\ba_{2\a})''$ acts in standard form on 
$\ch_{\om_\m}$. 

In Proposition \ref{crprac}, we have proven that $\pi_{\om_\m}(\ba_{2\a})''\sim L^\infty(\bt,\m)\ltimes_{R_{2\a}}\bz$. Moreover, as the action of $\bz$ on $L^\infty(\bt,\m)$ generated by $R^2$ is ergodic, it is also free (see {\it e.g.} pag. 363 in Section V.7 of \cite{T1}). Then by Theorem XIII.1.5 and Corollary XIII.1.6 in \cite{T1} (see also \cite{K1, K3} for the original proofs), $\pi_{\om_\m}(\ba_{2\a})''$ is a factor containing $\pi_{\om_\m}(\ca)''$ as a maximal abelian subalgebra. In addition, it is hyperfinite as $\bz$ is amenable, see {\it e.g.} Theorem 4.4 of \cite{K2a} or also \cite{CFW}.

Concerning the type, trivially $\pi_{\om_\m}(\ba_{2\a})''$ cannot be of type $\ty{I}$ since $\m$ is nonatomic. It is of type $\ty{II_1}$ if and only if the measure class $[\m]$ contains a probability measure which is invariant under the action generated by $R^2$, see Theorem XIII.1.7 in \cite{T1}. In this case, $\m\sim m$ and $\pi_{\om_\m}(\ba_{2\a})\sim\pi_{\t}(\ba_{2\a})$ by uniqueness. This always occurs when $\a$ is diophantine. 

In the remaining cases, if $\pi_{\om_\m}(\ba_{2\a})$ is not of $\ty{II_1}$, its type is determined by the Connes invariant ${\rm S}\big(\pi_{\om_\m}(\ba_{2\a})\big)$. The assertion follows by noticing that the last coincides with 
the Krieger-Araki-Woods ratio set ({\it cf.} \cite{K2}) $r([\m],R^{2})$ associated to the underlying ergodic $W^*$-dynamical system $\big(L^\infty(\bt,\m), R^{2}\big)$, see  {\it e.g.} Section 2.1 in \cite{RMPR}.
\end{proof}
Collecting together the previous theorem and Proposition \ref{catsum}, non type $\ty{II_1}$ representations of the noncommutative 2-torus $\ba_{2\a}$ can be explicitly constructed for any Liouville number $\a$.

\section{The deformed Dirac operator}
\label{dirac}

\noindent
The present section is devoted to define a deformed Dirac operator associated with one of the representations of Theorem \ref{thmcz} whose twist is associated to the modular operator. The reader is referred to \cite{CM}, where such a structure is investigated for the type $\ty{II_1}$ case arising from inner bounded "smooth" perturbations of the canonical trace. As the state $\om_\m$ in \eqref{ommu} is fixed, from now on we denote it simply by $\om$ if is not otherwise specified.

For such a purpose, we work on the space $\bigoplus_{n\in\bz}L^2(\bt,m)$ rather than on $\bigoplus_{n\in\bz}L^2(\bt,\m)$. These Hilbert spaces are unitarily equivalent, such an equivalence being implemented by the unitary operator $u: L^2(\bt,\m)\to L^2(\bt, m)$ as follows:
$$
g\in L^2(\bt,\m)\mapsto ug:=g\circ\mathpzc{h}^{-1}_{T^2}\in L^2(\bt, m).
$$
Under the unitary equivalence realised by the direct sum of infinitely many copies of $u$, it is easy to see that, again with an abuse of notation,
\begin{equation}
\label{ggnnss666}
\begin{split}
\ch_{\om}&=\ell^2\big(\bz;L^2(\bt, m)\big),\\
(\pi_{\om}(W(f))g)_n(z)&=\sum_{l\in\bz}\left(\widecheck{f^{(l)}}\circ \mathpzc{h}^{-1}_{T^2}\circ T^{2n-l}\right)(z)g_{n-l}(z),\\
(\xi_{\om})_n(z)&=\d_{n,0},\quad z\in\bt,\,\,n\in\bz.
\end{split}
\end{equation}
By \eqref{rcanz} and \eqref{ggnnss765}, for $n\in\bz$ and $z\in\bt$, the related modular structure is then given by
\begin{align*}
(\D x)_n(z)=&\frac{z(DT^{2n})(z)}{T^{2n}(z)}x_{n}(z),\\ 
(Jx)_n(z)=&\bigg[\frac{z(DT^{2n})(z)}{T^{2n}(z)}\bigg]^{1/2}\overline{(x_{-n}\circ T^{2n})(z)}.
\end{align*}
We set
\begin{equation}
\label{rndica}
\d_n(z):=\frac{\di m_{-2n}}{\di m}(z)\equiv\frac{z(DT^{2n})(z)}{T^{2n}(z)},\quad z\in\bt,\,\,n\in\bz,
\end{equation}
so that the modular operator is given by $\D=\bigoplus_{n\in\bz}M_{\d_n}$, $M_f$ denoting the multiplication operator by the function $f$.

After changing representation according to \eqref{ggnnss666}, on the Hilbert space
\begin{equation}
\label{accam}
\begin{split}
\ch_\om\oplus\ch_\om\equiv&
\bigg(\bigoplus_\bz L^2(\bt, m)\bigg)\bigoplus\bigg(\bigoplus_\bz L^2(\bt, m)\bigg) \\ 
=&\bigoplus_\bz\bigg(L^2(\bt, m)\bigoplus L^2(\bt, m)\bigg)
\end{split}
\end{equation}
we put
$$
D:=\begin{pmatrix} 
	 0 &L \\
	L^* & 0\\
     \end{pmatrix},
$$
where 
\begin{equation}
\label{eldcz}
L=(\partial_1+\imath\partial_2).
\end{equation}
Here, the $\partial_i$ are the partial derivatives w.r.t. the angles $\th_i$, $i=1,2$, of functions $f\big(e^{\imath\th_1},e^{\imath\th_2}\big)$ on the 2-torus $\bt^2$, which in our context assume the form
$$
(\partial_1g)_n(z):=\imath z\frac{\di g_n}{\di z}(z),\quad (\partial_2g)_n(z):=\imath ng_n(z),\quad z\in\bt,\,\,n\in\bz.
$$
We then compute
$$
D=\bigoplus_{n\in\bz}D_n=\bigoplus_{n\in\bz}
     \begin{pmatrix} 
	 0 &\big(\imath z\frac{\di\,\,}{\di z}-n I\big)\\
	\big(-\imath z\frac{\di\,\,}{\di z}-n I\big)& 0\\
     \end{pmatrix},
$$
where each $D_n= \begin{pmatrix} 
	 0 &L_n\\
	L_n^*& 0\\
     \end{pmatrix}$, with $L_n:=\imath z\frac{\di\,\,}{\di z}-n I$,
acts on the direct sum $L^2(\bt, m)\bigoplus L^2(\bt, m)$ of two copies of $L^2(\bt, m)$.

For the convenience of the reader, we write down the spectral resolution of $D$. By noticing that $\{z^n\mid n\in\bz\}$ is an orthonormal basis of $L^2(\bt,m)$, one gets
\begin{eqnarray*}
 & & \begin{pmatrix} 
	0 &\big( \imath z\frac{\di\,\,}{\di z}-n I\big)\\
	\big(-\imath z\frac{\di\,\,}{\di z}-n I\big)& 0\\
     \end{pmatrix} \, \begin{pmatrix} 
	 a^{(\pm)}_m z^m\\
	b^{(\pm)}_m z^m\\
     \end{pmatrix} = \pm \sqrt{n^2+m^2} \begin{pmatrix} 
	 a^{(\pm)}_m z^m\\
	b^{(\pm)}_m z^m\\
     \end{pmatrix}
\end{eqnarray*}
if and only if $a^{(\pm)}_m = \pm \frac{\imath m-n}{\sqrt{n^2+m^2}} b^{(\pm)}_m$, provided $m,n\neq0$. The eigenspace corresponding to $m,n=0$ has degeneracy 2, and we can choose as eigenfunctions the constant vectors
$$
\eps_{00}^{(\pm)} = \frac1{\sqrt2}\begin{pmatrix} 
 1\\
 \pm1\\
\end{pmatrix}.
$$
The remaining cases provide simple eigenvectors, given after normalisation by
$$
\eps_{mn}^{(\pm)}(z) = \frac1{\sqrt2}\begin{pmatrix} 
 \frac{\imath m-n}{\sqrt{n^2+m^2}} \\
 \pm1\\
\end{pmatrix}z^m,\quad (m,n)\in\bz^2\backslash\{(0,0)\},\,z\in\bt.
$$
Thus, $\s(D)=\big\{\pm\sqrt{m^2+n^2}\mid m,n\in\bn\big\}$. Finally, setting
$$
e_{nm}^{(\pm)}:=\bigoplus_{k\in\bz}\delta_{n,k} \eps_{mn}^{(\pm)} \in \ch_\om\oplus\ch_\om,
$$
we see that the orthonormal system $\big\{e_{nm}^{(\pm)}\mid m,n\in\bz\big\}$ is a basis for $\ch_\om\oplus\ch_\om$ made of eigenvectors of the untwisted Dirac operator $D$.

The deformed Dirac operator with twisting determined by the modular operator, is here defined as follows. Denote $AC(\bt)$ the set of all absolutely continuous complex valued functions on the unit circle.
The Sobolev-Hilbert space $H^1(\bt)$ is given by
$$
H^1(\bt):=\big\{f\in AC(\bt)\mid f'\in L^2(\bt,m)\big\}.
$$ 
For each $n\in\bz$, we put
$$
\cd_{D^\s_n}:=H^1(\bt)\oplus H^1(\bt)\subset L^2(\bt,m)\oplus L^2(\bt,m),
$$
and define the (unbounded) operators
$$
 D^\s_n:=\begin{pmatrix} 
	 0 &M_{\d^{-1}_n}L_n\\
	L_n^*M_{\d^{-1}_n}& 0\\
     \end{pmatrix}.
$$
Then the deformed Dirac operator $D^\s$ is defined as
\begin{equation}
\label{ndsd}
D^\s:=\begin{pmatrix} 
	 0 &\D^{-1}L\\
	L^*\D^{-1}& 0\\
     \end{pmatrix}=\bigoplus_{n\in\bz}
     \begin{pmatrix} 
	 0 &M_{\d^{-1}_n}L_n\\
	L_n^*M_{\d^{-1}_n}& 0\\
     \end{pmatrix}=\bigoplus_{n\in\bz}D^\s_n,
\end{equation}
on the domain 
$$
\cd_{D^\s}:=\bigg\{\xi\in\bigoplus_{n\in\bz}\cd_{D^\s_n}\mid\sum_{n\in\bz}\|D^\s_n\xi_n\|^2<+\infty\bigg\}.
$$
\begin{prop}
\label{dsukz}
The operator $D^\s$ with domain $\cd_{D^\s}$ is selfadjoint.
\end{prop}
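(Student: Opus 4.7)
The plan is to reduce selfadjointness to the direct-summand level and then apply a standard ``off-diagonal'' criterion. Since $D^\s=\bigoplus_{n\in\bz}D^\s_n$ with the domain given as the natural $\ell^2$-domain for a direct sum, a well-known result (\emph{e.g.} Reed--Simon, Thm.\ VIII.33) guarantees that $D^\s$ is selfadjoint on $\cd_{D^\s}$ as soon as each summand $D^\s_n$ is selfadjoint on $H^1(\bt)\oplus H^1(\bt)$. So the whole task reduces to proving the selfadjointness of $D^\s_n$ for each fixed $n\in\bz$.

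First I would collect the relevant regularity of the multiplier $\d_n$. Since $T^{2n}=\mathpzc{f}^n$ is a $C^\infty$-diffeomorphism of the compact manifold $\bt$, formula \eqref{rndica} shows that $\d_n\in C^\infty(\bt)$ is strictly positive. By compactness, both $\d_n$ and $\d_n^{-1}$ are bounded, so $M_{\d_n^{-1}}$ is a bounded, selfadjoint, invertible operator on $L^2(\bt,m)$ whose inverse $M_{\d_n}$ is also bounded. Furthermore, multiplication by $\d_n^{\pm1}$ preserves $H^1(\bt)$, a fact I would use implicitly when identifying domains.

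Next I would note that $L_n=\imath z\,\frac{\di\,}{\di z}-nI$ is closed on $H^1(\bt)$: indeed $\imath z\,\frac{\di\,}{\di z}=\frac{\di\,}{\di\th}$ is closed on its maximal Sobolev domain, and adding the bounded perturbation $-nI$ preserves closedness. Setting $A_n:=M_{\d_n^{-1}}L_n$ on $\cd_{A_n}=H^1(\bt)$, the product of a bounded operator with a closed densely defined operator is closed and densely defined. Since $M_{\d_n^{-1}}$ is bounded and selfadjoint, the general identity $(BC)^*=C^*B^*$ (valid when $B$ is bounded) yields
$$
A_n^*=L_n^*\,M_{\d_n^{-1}}.
$$
Here $\cd_{A_n^*}=\{f\in L^2(\bt,m):M_{\d_n^{-1}}f\in H^1(\bt)\}=H^1(\bt)$, using that $\d_n^{-1}$ is smooth and bounded away from $0$. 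Thus $A_n^*$ is also closed with the same domain $H^1(\bt)$.

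Finally I would invoke the standard ``Dirac-type'' criterion: for any closed densely defined operator $A$ between Hilbert spaces, the operator $\bigl(\begin{smallmatrix}0&A\\mathcal A^*&0\end{smallmatrix}\bigr)$ on the domain $\cd_{A^*}\oplus\cd_{A}$ is selfadjoint (its adjoint has matrix entries $A^{**}$ and $A^*$, and $A^{**}=A$ by closedness). Applied with $A=A_n$, this gives selfadjointness of $D^\s_n$ on $H^1(\bt)\oplus H^1(\bt)$, completing the reduction. The only mild subtlety I anticipate is the careful verification that $\cd_{A_n^*}$ really equals $H^1(\bt)$ and not some larger set, but this is immediate from the smoothness and invertibility of $M_{\d_n^{-1}}$; once this is in place, the rest of the argument is essentially formal.
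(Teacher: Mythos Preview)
Your argument is correct. Both you and the paper reduce first to the selfadjointness of each $D^\s_n$ and then pass to the direct sum, but the style of the two arguments differs. The paper works by hand at both stages: for each $n$ it checks symmetry directly and shows $\cd_{(D^\s_n)^*}\subset H^1(\bt)\oplus H^1(\bt)$ by testing against vectors of the form $(0,\tilde\xi)$ and $(M_{\d_n}\tilde\xi,0)$ with $\tilde\xi\in H^1(\bt)$; it then repeats a similar bare-hands adjoint-domain argument to handle the full direct sum. You instead invoke two standard structural facts: the off-diagonal criterion (for closed densely defined $A$, the block operator $\bigl(\begin{smallmatrix}0&A\\A^*&0\end{smallmatrix}\bigr)$ is selfadjoint on $\cd_{A^*}\oplus\cd_A$), and the direct-sum theorem for selfadjoint summands on the $\ell^2$-domain. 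The only nontrivial verification in your route is that $\cd_{A_n^*}=H^1(\bt)$, which you correctly deduce from the smoothness and strict positivity of $\d_n$---this is exactly the fact $M_{\d_n^{\pm1}}H^1(\bt)=H^1(\bt)$ that the paper also singles out. Your approach is more conceptual and slightly shorter; the paper's is more self-contained, avoiding any appeal to external lemmas.
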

\begin{proof}
We start by showing that the operators $D^\s_n$ are selfadjoint on $\cd_{D^\s_n}:=H^1(\bt)\oplus H^1(\bt)$. First, we notice that $M_f H^1(\bt)=H^1(\bt)$, whenever $f\in C^\infty(\bt)$ with 
$0<c\leq f(z)\leq1/c$, property satisfied by all $\d_n$, $n\in\bz$. It is also easily seen that $D^\s_n$ is symmetric on $H^1(\bt)\oplus H^1(\bt)$. Then it is enough to check 
$\cd_{(D^\s_n)^*}\subset H^1(\bt)\oplus H^1(\bt)$. Indeed, let $\eta=(\eta_1,\eta_2)\in\cd_{(D^\s_n)^*}$ with $\| \eta \| = 1$. Then for vectors $\xi\in\cd_{D^\s_n}$ of the form $(0,\tilde\xi)$ and 
$(M_{\d_n}\tilde\xi,0)$ with $\tilde\xi\in H^1(\bt)$, we get
$$
|\langle D^\s_n\xi,\eta\rangle|=\left\{\begin{array}{ll}
                     |\langle L_n\tilde\xi,M_{\d^{-1}_n}\eta_1\rangle| & \leq C\|\tilde\xi\| \\
                      |\langle L^*_n\tilde\xi,\eta_2\rangle| & \leq C\|\d_n\|_\infty\|\tilde\xi\| 
                    \end{array}.
                    \right.
$$
Thus, $M_{\d^{-1}_n}\eta_1$ hence $\eta_1$, and $\eta_2$ are in $H^1(\bt)$, and therefore all $D^\s_n$ are selfadjoint on $H^1(\bt)\oplus H^1(\bt)$.

Concerning $D^\s$, which is the direct sum of the $D^\s_n$, we note that it is symmetric on $\cd_{D^\s}$. As before, it is enough to show that $\cd_{(D^\s)^*}\subset\cd_{D^\s}$. Suppose 
$\eta\in\cd_{(D^\s)^*}$. By considering elements of the form $\xi=\tilde\xi\d_{n}$, $\tilde\xi\in H^1(\bt)\oplus H^1(\bt)$, we easily show that $\eta_n\in H^1(\bt)\oplus H^1(\bt)$ for each $n\in\bz$. 

By taking into account the previous facts, we compute for $\xi=(\xi_n)_{n\in\bz}\in\cd_{D^\s}$ with finite support in $n$, and $\eta\in\cd_{(D^\s)^*}$,
$$
\bigg|\sum_{n\in\bz}\langle\xi_n,D^\s_n\eta_n\rangle\bigg|=\bigg|\sum_{n\in\bz}\langle D^\s_n\xi_n,\eta_n\rangle\bigg|=\big|\langle D^\s\xi,\eta\rangle\big|\leq C\|\xi\|.
$$
Therefore, $\sum_{n\in\bz}\big\|D^\s_n\eta_n\big\|^2<+\infty$, that is $\eta\in\cd_{D^\s}$.
\end{proof}
We want to check whether $D^\s$ has compact resolvent. To this aim, we see that 
\begin{equation}
\label{nsssd}
D^\s=e^KDe^K=\bigoplus_{n\in\bz}e^{K_n}D_ne^{K_n},
\end{equation}
where
$$
K=\begin{pmatrix} 
\bigoplus_{n\in\bz}M_{-\ln\d_n}& 0 \\
	0&0 \\
     \end{pmatrix}
     =\bigoplus_{n\in\bz}K_n
     =\bigoplus_{n\in\bz}\begin{pmatrix} 
M_{-\ln\d_n}& 0 \\
	0&0 \\
     \end{pmatrix}.
$$
Notice that, for the untwisted Dirac operator we have
\begin{equation*}
D_n=\sum_{m\in\bz}\sqrt{m^2+n^2}(P_{m,n}^+-P_{m,n}^-),
\end{equation*}
where $P_{m,n}^+,P_{m,n}^-$ are finite range projections with uniformly bounded range dimension. In addition, $D_n$ is invertible for each $n\neq0$. After defining (with an abuse of notation)
\begin{equation*}
D^{-1}_0:=D^{-1}_0P^\perp_{{\rm Ker}(D_0)},
\end{equation*}
we see that each $D_n$ is invertible with bounded inverse
\begin{equation}
\label{0df010}
D_0^{-1}=\sum_{m\in\bz\backslash\{0\}}\frac{P_{m,0}^+-P_{m,0}^-}{|m|},\,\,
D_n^{-1}=\sum_{m\in\bz}\frac{P_{m,n}^+-P_{m,n}^-}{\sqrt{m^2+n^2}},\,\,
n\in\bz\backslash\{0\}.
\end{equation}
This immediately yields the well known fact that $D^{-1}$, and therefore the $D^{-1}_n$, $n\in\bz$, are compact operators. 

Concerning $D^\s$, after defining as before
\begin{equation}
\label{1df1}
(D^{\s}_0)^{-1}:=(D^{\s}_0)^{-1}P^\perp_{{\rm Ker}(D^{\s}_0)},
\end{equation}
the $D^\s_n$ are invertible with bounded inverse given by
\begin{equation}
\label{1czadf18}
     (D_n^\s)^{-1}=\begin{pmatrix} 
	 0 &M_{\d_n}(L_n^*)^{-1}\\
	L_n^{-1}M_{\d_n}& 0\\
     \end{pmatrix}.
\end{equation}
Therefore, $D^\s$ has always an inverse given by
\begin{equation}
\label{1czadf19}
(D^\s)^{-1}=\bigoplus_{n\in\bz}(D_n^\s)^{-1}
\end{equation}
which is bounded ({\it i.e.} $\car_{D^\s}=\ch_\om\oplus\ch_\om$ by the closed graph theorem) if and only if $\bigoplus_{n\in\bz}(D^\s_n)^{-1}$ defines a bounded operator, which happens if and only if
\begin{equation}
\label{1czadf1}
\sup_{n\in\bz}\big\|(D^\s_n)^{-1}\|<+\infty.
\end{equation}
As $D^\s$ is selfadjoint, it has compact resolvent if and only if $(D^\s)^{-1}$ is compact with the convention in \eqref{1df1} for the inverse, provided \eqref{1czadf1} is satisfied. For such a purpose, we are interested in the asymptotic of $\|(D_n^\s)^{-1}\|$ for $n\to\infty$, in the natural direct sum decomposition \eqref{1czadf19} of $D^\s$.
\begin{lem}
\label{gamrh}
We have $\big\|(D_n^\s)^{-1}\big\|\leq\frac{{\G}_{|n|}(T^2)}{|n|},\quad n\in\bz\backslash\{0\}$.
\end{lem}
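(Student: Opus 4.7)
The plan is to exploit the off-diagonal block structure of $(D_n^\s)^{-1}$ given in \eqref{1czadf18}, and then estimate separately the diagonal multiplication part $M_{\d_n}$ and the inverse of $L_n$ (and of $L_n^*$). Since $(D_n^\s)^{-1}$ has the anti-diagonal shape $\bigl(\begin{smallmatrix}0 & A \\ B & 0\end{smallmatrix}\bigr)$ with $A=M_{\d_n}(L_n^*)^{-1}$ and $B=L_n^{-1}M_{\d_n}$, its operator norm equals $\max\{\|A\|,\|B\|\}$, which is in turn bounded by $\|M_{\d_n}\|\,\max\{\|L_n^{-1}\|,\|(L_n^*)^{-1}\|\}$.

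First I would control $L_n^{-1}$ and $(L_n^*)^{-1}$. Since $\{z^m\mid m\in\bz\}$ is an orthonormal basis of $L^2(\bt,m)$ diagonalising both $L_n=\imath z\frac{\di\,}{\di z}-nI$ and its adjoint via
\begin{equation*}
L_n z^m=(\imath m-n)z^m,\qquad L_n^* z^m=(-\imath m-n)z^m,
\end{equation*}
with the convention \eqref{1df1} adopted for $n=0$ (which is not needed here anyway), we immediately obtain
\begin{equation*}
\|L_n^{-1}\|=\|(L_n^*)^{-1}\|=\sup_{m\in\bz}\frac{1}{\sqrt{m^2+n^2}}=\frac{1}{|n|},\qquad n\in\bz\setminus\{0\}.
\end{equation*}

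Next I would compute $\|M_{\d_n}\|_\infty=\|\d_n\|_\infty$ and relate it to the growth sequence of $T^2$. The key elementary observation is that $T^{2n}$ maps $\bt$ into itself, so writing $T^{2n}(e^{\imath\th})=e^{\imath\psi_n(\th)}$ one finds $(DT^{2n})(z)=\psi_n'(\th)\,T^{2n}(z)/z$ for $z=e^{\imath\th}$. Plugging into the definition \eqref{rndica} yields $\d_n(z)=\psi_n'(\th)$, so that
\begin{equation*}
\|\d_n\|_\infty=\|\psi_n'\|_\infty=\|DT^{2n}\|_\infty.
\end{equation*}
For $n>0$ this last quantity is $\|D(T^2)^n\|_\infty\leq\G_{n}(T^2)$, while for $n<0$ it is $\|D(T^2)^{-|n|}\|_\infty\leq\G_{|n|}(T^2)$; in either case $\|\d_n\|_\infty\leq\G_{|n|}(T^2)$.

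Combining the two estimates gives
\begin{equation*}
\|(D_n^\s)^{-1}\|\leq\|M_{\d_n}\|\cdot\max\{\|L_n^{-1}\|,\|(L_n^*)^{-1}\|\}\leq\frac{\|\d_n\|_\infty}{|n|}\leq\frac{\G_{|n|}(T^2)}{|n|},
\end{equation*}
which is the desired bound. No serious obstacle is expected: the only slightly delicate point is the identification $|\d_n(z)|=|DT^{2n}(z)|$ on $\bt$, which merely requires using that $T^{2n}$ preserves the unit circle and carrying out the chain rule in the angular variable.
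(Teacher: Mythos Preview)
Your proof is correct and follows essentially the same route as the paper: use the anti-diagonal block form \eqref{1czadf18} to reduce to $\|M_{\d_n}\|\cdot\|L_n^{-1}\|$, bound $\|L_n^{-1}\|$ by $1/|n|$ via the spectral decomposition, and bound $\|M_{\d_n}\|$ by $\G_{|n|}(T^2)$. You are simply more explicit than the paper, in particular in justifying $|\d_n(z)|=|DT^{2n}(z)|$ through the angular variable (the paper just cites \eqref{rndica} and the definition of the growth sequence), and in computing $\|L_n^{-1}\|$ directly rather than passing through $\|D_n^{-1}\|$ and \eqref{0df010}.
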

\begin{proof}
By taking into account the definition of the growth sequence, \eqref{1czadf18}, \eqref{rndica} and \eqref{0df010}, we get
\begin{align*}
\big\|(D_n^\s)^{-1}\big\|=\big\|L_n^{-1}M_{\d_n}\big\|\leq&\big\|M_{\d_n}\big\|\big\|L_n^{-1}\big\|\leq\G_{|n|}(T^2)\big\|L_n^{-1}\big\|\\
=&\G_{|n|}(T^2)\big\|D_n^{-1}\big\|\leq\frac{\G_{|n|}(T^2)}{|n|}.
\end{align*}
\end{proof}
The main result of the present section is the following
\begin{thm}
\label{t2}
The Dirac operator $D^\s$ in \eqref{ndsd} has compact resolvent if and only if 
\begin{equation}
\label{kop}
\lim_{n\to\infty}\big\|(D^\s_n)^{-1}\big\|_{L^2(\bt, m)}=0,
\end{equation}
and if 
\begin{equation}
\label{kop01}
\G_{n}(T^2)=o(n).
\end{equation}
\end{thm}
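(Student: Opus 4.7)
The plan is to exploit the block-diagonal structure $D^\s=\bigoplus_{n\in\bz}D^\s_n$ displayed in \eqref{ndsd}, since selfadjointness has already been established in Proposition \ref{dsukz} and the blockwise inverses are computed in \eqref{1czadf18}. The whole argument then reduces to (a) showing that each block inverse is compact, and (b) invoking the standard criterion that a countable orthogonal direct sum of compact operators is compact if and only if the operator norms of the summands tend to zero.

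For step (a), I would read off from the spectral decomposition leading to \eqref{0df010} that $D_n^{-1}$ is compact for every $n\in\bz$ (with the kernel convention \eqref{1df1} when $n=0$), because its eigenvalues $\pm(m^{2}+n^{2})^{-1/2}$ form a sequence tending to $0$ as $|m|\to\infty$. Consequently each operator $L_n^{-1}$, $(L_n^{*})^{-1}$ is also compact, and since the multiplication operator $M_{\d_n}$ is bounded (with $\|M_{\d_n}\|\leq \G_{|n|}(T^{2})$ by \eqref{rndica}), the factorisation \eqref{1czadf18} exhibits $(D^\s_n)^{-1}$ as a compact operator.

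For step (b), I would argue as follows. If $\|(D^\s_n)^{-1}\|\to 0$, then the finite truncations $T_N:=\bigoplus_{|n|\leq N}(D^\s_n)^{-1}$ are compact (finite direct sums of compacts) and satisfy
$$
\bigl\|\,(D^\s)^{-1}-T_N\bigr\|=\sup_{|n|>N}\bigl\|(D^\s_n)^{-1}\bigr\|\xrightarrow[N\to\infty]{}0,
$$
so $(D^\s)^{-1}$ is a norm-limit of compact operators and hence compact. Conversely, if $\|(D^\s_n)^{-1}\|\not\to 0$, then one can extract a subsequence $n_k\to\infty$ and unit vectors $\eta_k$ supported in the $n_k$-th block with $\|(D^\s_{n_k})^{-1}\eta_k\|\geq\eps$ for some $\eps>0$; the corresponding images are mutually orthogonal, so $(D^\s)^{-1}$ cannot be compact. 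Note also that condition \eqref{kop} forces $\sup_n\|(D^\s_n)^{-1}\|<+\infty$, so the boundedness requirement \eqref{1czadf1} is automatically subsumed.

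Finally, the sufficient condition \eqref{kop01} is an immediate consequence of Lemma \ref{gamrh}, which yields $\|(D^\s_n)^{-1}\|\leq \G_{|n|}(T^{2})/|n|$ for $n\neq 0$; hence $\G_{n}(T^{2})=o(n)$ implies \eqref{kop}. I do not foresee any real obstacle: the only delicate point is the direct-sum compactness criterion, which is standard. The genuine content of the theorem sits in the earlier Lemma \ref{gamrh} and in the fact (established in Proposition \ref{catsum}(vi)) that the diffeomorphisms of interest actually satisfy $\G_n(T^{2})=o(n)$, so that the resolvent compactness is not vacuous.
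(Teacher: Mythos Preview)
Your proof is correct and follows essentially the same route as the paper: blockwise compactness of $(D^\s_n)^{-1}$, the standard direct-sum criterion (norm-limit of finite truncations for sufficiency, an orthogonal sequence with images bounded below for necessity), and Lemma \ref{gamrh} for the growth-sequence condition. The only cosmetic difference is that the paper obtains compactness of each $(D^\s_n)^{-1}$ from the similarity relation \eqref{nsssd}, $(D^\s_n)^{-1}=e^{-K_n}D_n^{-1}e^{-K_n}$, whereas you use the factorisation \eqref{1czadf18}; both amount to ``bounded times compact''.
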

\begin{proof}
The proof relies on the form \eqref{nsssd} of $D^\s$. First, it has compact resolvent if and only if $(D^\s)^{-1}$ is compact. As each of its direct summands $(D_n^\s)^{-1}=e^{-K_n}D_n^{-1}e^{-K_n}$ is compact, it will happen if and only if \eqref{kop} is satisfied. Indeed, if the this holds true, $(D^\s)^{-1}$ is compact being the norm limit of the sequence of compact operators $(D^\s)^{-1}P_n$, with $P_n$ the orthogonal projection onto the $n$-direct summand $L^2(\bt,m)\oplus L^2(\bt,m)$ in $\ch_\om\oplus\ch_\om$. Conversely, suppose \eqref{kop} does not hold true. Then there would exist a sequence $(\xi_k)_{k\in\bn}\subset\ch_\om\oplus\ch_\om$ of unit vectors such that for $k,l\in\bn$ $k\neq l$,
$$
(D^\s)^{-1}\xi_k\perp (D^\s)^{-1}\xi_l,\quad \big\|(D^\s)^{-1}\xi_k\big\|\geq\eps>0.
$$
Therefore, the sequence $\big((D^\s)^{-1}\xi_k\big)_{k\in\bn}$ is not relatively compact.

For the second half, if \eqref{kop01} is satisfied, Lemma \ref{gamrh} leads to \eqref{kop}, and thus $(D^\s)^{-1}$ is compact.
\end{proof}

\section{The modular spectral triple}
\label{mstc}

\noindent
The present section is aimed to build a reasonable deformed spectral triple whose twist ({\it i.e.} the twist of the deformed commutator with the associated Dirac operator) is constructed by using the Tomita modular operator. 

To make the reasoning meaningful at the first stage, we fix any element $A\in\pi_\om(\ba_{2\a})''$ in the set of the analytic elements w.r.t. the modular action $\s^\om$ associated with the state $\om$ ({\it e.g.} Section 10.16 of \cite{SZ}), and define on the doubled representation space $\ch_\om\oplus\ch_\om$ in \eqref{accam},
$$
\S_t(A):=
\begin{pmatrix} 
	 \D^{\imath t}A\D^{-\imath t} &0\\
	0& A\\
     \end{pmatrix},\quad t\in\br.
$$
With a slight abuse of notation, we also define the deformed commutator as follows:
$$
\cd_L^{\s}(A)\equiv\imath\big[D^\s, A\big]_{\s}:=\imath\big(D^\s\S_{-\imath}(A)-\S_{\imath}(A)D^\s\big),
$$
where $\S_{z}$ denotes the analytic continuation to $z\in\bc$ of the one-parameter group $t\in\br\mapsto\S_{t}\in\cb\big(\cb(\ch_\om\oplus\ch_\om)\big)$.

A straightforward computation then yields
\begin{equation}
\label{spetr112}
\cd^{\s}_L(A)=\imath
\begin{pmatrix} 
	0&\D^{-1}[L,A]\\
	[L^*,A]\D^{-1}& 0\\
     \end{pmatrix}.
\end{equation}
Such a definition of deformed commutator would give a real map: $\cd^{\s}_L(A)^*=\cd^{\s}_L(A^*)$, provided that $\cd^{\s}_L(A)$ uniquely defines a bounded operator acting on 
$\ch_\om\oplus\ch_\om$. 
Notice also that \eqref{spetr112} can be meaningful even if $A$ is not an analytic element.

We adopt directly \eqref{spetr112} as the definition of the deformed derivation, and look at elements $A\in\pi_\om(\ba_{2\a})$ such that $\cd^{\s}_L(A)$ uniquely defines bounded operators acting on $\ch_\om\oplus\ch_\om$ according to Definition \ref{fmst}.

Unfortunately, when dealing with the natural candidate for the deformed Dirac operator appearing in Section \ref{dirac}, we meet an unavoidable obstruction arising from the commutation relation
\begin{equation}
\label{medr0}
\big[L,\l^l\big]=-l\l^l,\quad l\in\bz,
\end{equation}
and therefore
\begin{equation}
\label{medr}
\D^{-1}\big[L,\l^l\big]=-l\D^{-1}\l^l,\quad l\in\bz,
\end{equation}
where $\l\in\pi_\om(\ba_{2\a})$ is the one-step shift given in \eqref{lash}.
Due to the presence of the unbounded operator $\D^{-1}$, the latter can never be bounded in the type $\ty{III}$ case. This is connected with the asymmetric role of the abelian algebras generated 
by the Weyl operators $W(\d_{m,k}\d_{n,0})$ and $W(\d_{m,0}\d_{n,l})$,
in the construction of the states $\om$ considered in Section \ref{secstat}. 
However, we can yet exhibit the corresponding modular spectral triple associated with the Dirac operator \eqref{ndsd}, which might be of interest for some possible application.

Consider the subset $\cc_0\subset C(\bt)$ defined by 
$$
\cc_0:=\big\{f\circ\mathpzc{h}_{T^2}\mid f\in C^1(\bt)\big\}.
$$
For each finite subset $J\subset\bz$, consider the functions $f_k(m,n):=\widehat{F_k}(m)\d_{n,0}$ with $F_k\in \cc_0$, and $g_k(m,n):=\d_{m,0}\d_{n,k}$. We define $\cb_0\subset\cb(\bz^2)$ as the set of all functions
\begin{equation}
\label{prrcsalcz}
f:=\sum_{k\in J}(f_k*_{2\a}g_k),\quad J\subset\bz\,\,\,\text{finite}\,.
\end{equation}
We also define
$$
\ba_{2\a}^{oo}:=\big\{W(f)\mid f\in\cb_0\big\}.
$$
By taking into account
$$
f\circ\mathpzc{h}_{T^2}\circ R^{2n}=f\circ T^{2n}\circ\mathpzc{h}_{T^2},
$$
$f\circ T^{2n}\in C^1(\bt)$ provided that $f\in C^1(\bt)$. Therefore,
$\ba_{2\a}^{oo}$ is a unital $*$-algebra, which is dense in $\ba_{2\a}$ by construction.
\begin{thm}
\label{sczptr}
For each orientation preserving $C^\infty$-diffeomorphism $\mathpzc{f}$ of the unit circle $\bt$ with $\r(\mathpzc{f})=2\a$, the following assertions hold true.
\begin{itemize}
\item[(i)] If $A=\pi_\om(W(f))$ with $f(m,n):=\hat G(m)\d_{n,0}$, $G\in\cc_0$, then $\cd^{\s}_L(A)$ defines a bounded operator acting on $\ch_\om\oplus\ch_\om$.
\item[(ii)] If $A\in\pi_\om(\ba_{2\a}^{oo})$ and $N\in\bn$, $(P_N\oplus P_N)\cd^{\s}_L(A)$ defines a bounded operator acting on $\ch_\om\oplus\ch_\om$, where $P_N$ is the orthogonal projection onto $\bigoplus_{|k|\leq N}L^2(\bt, m)$.
\end{itemize}
\end{thm}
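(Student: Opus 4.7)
The plan is to reduce the computation of $\cd_L^\s(A)$ to explicit fiberwise operations in the representation \eqref{ggnnss666} and exploit the explicit form of $\d_n$ in \eqref{rndica}. On the $n$-th fiber $L^2(\bt,m)$, one has $L_n = \imath z\frac{\di\,}{\di z} - nI$ and $\D$ acts as multiplication by $\d_n(z) = zDT^{2n}(z)/T^{2n}(z)$.

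\emph{Part (i).} Writing $G = \widetilde G\circ\mathpzc{h}_{T^2}$ with $\widetilde G\in C^1(\bt)$, the representation \eqref{ggnnss666} identifies $A$ with the fiberwise multiplication operator $M_n:=M_{\widetilde G\circ T^{2n}}$. Since the $-nI$ part of $L_n$ commutes with any multiplication operator,
$$
[L_n,M_n] = M_{\imath z\widetilde G'(T^{2n}(z))DT^{2n}(z)},
$$
and composing with $\d_n^{-1}$ produces the pivotal cancellation
$$
\d_n^{-1}(z)\cdot\imath z\widetilde G'(T^{2n}(z))DT^{2n}(z) = \imath T^{2n}(z)\widetilde G'(T^{2n}(z)),
$$
whose sup-norm is dominated by $\|\widetilde G'\|_\infty$ independently of $n$. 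A symmetric computation yields the same bound for $[L^*,A]\D^{-1}$, so $\cd_L^\s(A)$ extends to a bounded operator on $\ch_\om\oplus\ch_\om$.

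\emph{Part (ii).} I would first decompose $\pi_\om(A) = \sum_{k\in J} M^{(k)}\l^k$, using $W(f_k*_{2\a}g_k) = W(f_k)W(g_k)$, the identification $\pi_\om(W(g_k)) = \l^k$ from \eqref{lash}, and part (i) for the factors $\pi_\om(W(f_k))$. The Leibniz rule combined with \eqref{medr0} and the analogous identity $[L^*,\l^k] = -k\l^k$ yields
$$
[L,M^{(k)}\l^k] = [L,M^{(k)}]\l^k - kM^{(k)}\l^k,
$$
and similarly for $L^*$. The \emph{derivative} summand, composed with $\D^{-1}$ on the appropriate side, is globally bounded by the cancellation of part (i) (for the $L^*$ version one invokes in addition the chain-rule identity $DT^{2n}(z)/DT^{2(n-k)}(z) = DT^{2k}(T^{2(n-k)}(z))$). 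The \emph{shift} summand $-kM^{(k)}\l^k$ is exactly the obstruction \eqref{medr}: since $\|\d_n^{-1}\|_\infty$ is not uniformly bounded in $n$ in the non-type $\ty{II_1}$ case, this piece is genuinely unbounded on all of $\ch_\om$. Composing with $P_N$, however, restricts the output fiber index to $|n|\leq N$ (respectively $|n-k|\leq N+\max_{k\in J}|k|$ after the shift), and on this finite set $\d^{-1}$ takes only finitely many values, each a continuous function on the compact $\bt$ and therefore bounded. Both off-diagonal blocks of $(P_N\oplus P_N)\cd_L^\s(A)$ are thereby bounded, as required.

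The crux of the argument is the exact algebraic cancellation $\d_n^{-1}(z)\,zDT^{2n}(z) = T^{2n}(z)$, which turns the potentially wild factor $DT^{2n}$ into the bounded quantity $T^{2n}$; without it even part (i) would fail, and the computation is morally the same as in the Jacobi-like identities used in Section \ref{dirac} to analyse $(D^\s_n)^{-1}$. The necessity of the $P_N$ truncation in (ii), by contrast, is dictated by the obstruction \eqref{medr} and cannot be circumvented at the level of the full representation; this is precisely the point emphasised in the discussion preceding Definition \ref{fmst}.
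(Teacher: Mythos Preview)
Your proof is correct and follows essentially the same approach as the paper. In part (i) you reproduce exactly the paper's cancellation $\d_n^{-1}(z)\cdot\imath zDT^{2n}(z)(DF)(T^{2n}(z))=\imath T^{2n}(z)(DF)(T^{2n}(z))$; in part (ii) the paper argues more tersely, simply invoking (i), \eqref{medr}, the fact that $P_N$ commutes with $\D$, and $\|P_N\D^{-1}\|=\max_{|n|\leq N}\G_n(T^2)<+\infty$, whereas you spell out the Leibniz splitting into derivative and shift summands and observe (via the chain-rule identity) that the derivative summand is in fact globally bounded even before applying $P_N$---a slightly sharper statement than the paper records, but the same mechanism.
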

\begin{proof}
By Denjoy theorem, the diffeomorphism $\mathpzc{f}$ is conjugate to the rotation $R_{2\a}$ as established in {\bf (A)} given in Section \ref{5dfm}. Therefore, $\mathpzc{f}=T^2$ with
$T:=\mathpzc{h}_\mathpzc{f}\circ R_{\a}\circ \mathpzc{h}_\mathpzc{f}^{-1}$.

(i) Consider $F\in C^1(\bt)$ and set $f(m,n):=\widehat{F\circ\mathpzc{h}_{T^2}}(m)\d_{n,0}$. We note that $\pi_\om(W(f))_n=M_{F\circ T^{2n}}$. Then, setting $A=\pi_\om(W(f))$, $[L,A]$ is diagonal, providing the multiplication by the functions
\begin{equation}
\label{pfircl}
[L,A]_n(z)=\imath z\frac{\di\,}{\di z}F(T^{2n}(z))=\imath zDT^{2n}(z)(DF)(T^{2n}(z)).
\end{equation}
This implies that the deformed commutator is given by the multiplication by the functions
$$
(\D^{-1}[L,A])_n(z)=\imath T^{2n}(z)(DF)(T^{2n}(z)),
$$
which are uniformly bounded in $n\in\bz$ as $F\in C^1(\bt)$. Thus, the deformed commutator \eqref{spetr112} is meaningful for $A=\pi_\om(W(f))$ as a bounded operator acting on $\ch_\om\oplus\ch_\om$.

(ii) If $f$ is given by \eqref{prrcsalcz}, then 
$$
\pi_\om\big(W(f)\big)=\sum_{k\in J}\pi_\om\big(W(f_k)\big)\l^k.
$$
The claim then follows from (i), taking into account \eqref{medr}, that $P_N$ commutes with $\D$, and finally that
$$
\big\|P_N\D^{-1}\big\|=\max_{n\leq N}\G_n(T^2)<+\infty.
$$
\end{proof}
Therefore, concerning the naturally associated spectral triple, we have
\begin{cor}
\label{enni}
With the notations of Theorem \ref{sczptr}, consider $\om\equiv\om_\m\in\cs(\ba_{2\a})$ given in \eqref{ommu} for $\m=m\circ \mathpzc{h}_\mathpzc{f}$, and $L$ in \eqref{eldcz}.
\begin{itemize}
\item[(i)] The triplet $\big(\om,\pi_\om(\ba_{2\a}^{oo}), L\big)$ satisfies (i), (ii) and (iv) of Definition \ref{fmst}, provided that $\G_n(f)=o(n)$.
\item[(ii)] The type of the hyperfinite von Neumann factor $\pi_\om(\ba_{2\a})''$ is determined by the Krieger-Araki-Woods ratio set $r([m],\mathpzc{f})$, if it is not of type $\ty{II_1}$.
\end{itemize}
\end{cor}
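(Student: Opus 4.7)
The plan is to verify the three conditions (i), (ii), (iv) of Definition \ref{fmst} by assembling results already established in Sections \ref{secstat}--\ref{dirac}, and then to read off (ii) of the corollary from Theorem \ref{thmcz}. Specifically, condition (i) of the definition, namely $s(\om) \in Z(\ba_{2\a}^{**})$, will follow from Proposition \ref{st1} once it is noted that Lemma \ref{qihk} yields $\m_{-n} \sim \m_n$ for all $n \in \bz$, which is a stronger version of the hypothesis $\m_{2n} \preceq \m$ required there. Since the state $\om_\m$ is constructed precisely from $\m = m \circ \mathpzc{h}_\mathpzc{f}$, this input is available.

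For condition (ii) of the definition, the selfadjointness of $D^\s$ on the given domain is already established in Proposition \ref{dsukz}, so the only remaining point is the compactness of the resolvent. Here I would invoke Theorem \ref{t2}: the hypothesis $\G_n(\mathpzc{f}) = \G_n(T^2) = o(n)$ is exactly \eqref{kop01}, which, via Lemma \ref{gamrh}, forces $\|(D^\s_n)^{-1}\| \leq \G_{|n|}(T^2)/|n| \to 0$, hence \eqref{kop} holds and $(D^\s)^{-1}$ is compact.

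The main technical check is condition (iv), that $\pi_\om(\ba_{2\a}^{oo}) \cd_L \subset \cd_L$ and the same for $L^*$. For this, I would decompose an arbitrary element of $\pi_\om(\ba_{2\a}^{oo})$ as a finite sum $\sum_{k \in J} \pi_\om(W(f_k)) \l^k$ with $f_k$ of the form $f_k(m,n) = \widehat{F_k}(m)\d_{n,0}$, $F_k \in \cc_0$, exactly as in the proof of Theorem \ref{sczptr}(ii). The shift $\l$ clearly preserves $\cd_L$ and $\cd_{L^*}$ since it acts on the $\bz$-index, while $L, L^*$ act fibrewise. For the diagonal factor $\pi_\om(W(f_k))$, which on the $n$-th summand is multiplication by $F_k \circ T^{2n}$, the key observation is that each $F_k \in \cc_0$ writes as $F_k = G_k \circ \mathpzc{h}_{T^2}$ with $G_k \in C^1(\bt)$, whence $F_k \circ T^{2n} = G_k \circ R_{2\a}^n \circ \mathpzc{h}_{T^2}$. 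One then verifies, using the chain rule and the computation \eqref{pfircl}, that the multiplication operator by such a function maps $H^1(\bt)$ into itself, because $\mathpzc{h}_{T^2}$ is merely a homeomorphism but $T^{2n}$ is smooth and $G_k$ is $C^1$; the resulting derivative $\imath z\, DT^{2n}(z) (DG_k)(T^{2n}(z))$ is bounded and in $L^2(\bt,m)$. A uniform control in $n$ is not required for the domain inclusion (iv), only fibrewise preservation, so no growth assumption is needed here beyond what is already built into $\mathpzc{f}$.

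Part (ii) of the corollary is immediate: Theorem \ref{thmcz} asserts that $\pi_{\om_\m}(\ba_{2\a})''$ is hyperfinite, and its type, whenever not $\ty{II_1}$, is determined by $r([\m], R_{2\a})$, which by the conjugacy $\mathpzc{f} = \mathpzc{h}_{T^2} \circ R_{2\a} \circ \mathpzc{h}_{T^2}^{-1}$ equals $r([m], \mathpzc{f})$. The main obstacle in the whole argument is psychological rather than technical: one must accept that condition (iii) of Definition \ref{fmst} is genuinely violated here, because of the unbounded factor $\D^{-1}$ appearing in \eqref{medr}, so the present corollary only asserts (i), (ii), (iv). The repair of (iii) is deferred to the ultra-Liouville construction of Section \ref{pmstc}, where a modified Dirac operator is introduced precisely to circumvent this obstruction.
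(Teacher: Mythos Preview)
Your proof is correct and follows essentially the same route as the paper's own (terse) argument, which simply cites the definition of $\ba_{2\a}^{oo}$, \eqref{pfircl}, \eqref{medr0} for condition (iv), and Theorems \ref{sczptr} and \ref{thmcz} for the rest. You are in fact more explicit than the paper in invoking Proposition \ref{dsukz} and Theorem \ref{t2} for condition (ii) of Definition \ref{fmst}, which is the correct justification.

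Two small inaccuracies are worth correcting. First, your claim that ``$L,L^*$ act fibrewise'' is not quite right: the summand $\partial_2$ multiplies the $n$-th component by $\imath n$, so $L$ is not constant across fibres. The reason $\l$ preserves $\cd_L$ is rather that $[L,\l]=-\l$ is bounded (this is \eqref{medr0}), which is exactly the ingredient the paper cites. Second, in the representation \eqref{ggnnss666} the diagonal piece $\pi_\om(W(f_k))$ acts on the $n$-th summand as multiplication by $\widecheck{f_k^{(0)}}\circ\mathpzc{h}_{T^2}^{-1}\circ T^{2n}=G_k\circ T^{2n}$ (with $G_k\in C^1(\bt)$), not by $F_k\circ T^{2n}$; your final derivative formula $\imath z\,DT^{2n}(z)(DG_k)(T^{2n}(z))$ is nonetheless the correct one and matches \eqref{pfircl}, so the conclusion stands.
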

\begin{proof}
Concerning (iv) of Definition \ref{fmst}, it easily follows from the definition of $\ba_{2\a}^{oo}$, \eqref{pfircl} and \eqref{medr0}.
The remaining part follows from Theorem \ref{sczptr} and Theorem \ref{thmcz}.
\end{proof}
We end the present section by noticing that, in our situation, $\a$ diophantine produces always the $\ty{II_1}$ case, whereas the non type $\ty{II_1}$ is covered by Proposition \ref{catsum} for Liouville numbers.

\section{Another proposal for the modular spectral triple}
\label{pmstc}

\noindent
Due to the obstruction \eqref{medr}, it seems that the deformed commutator \eqref{spetr112} might not provide sufficiently many bounded operators for any reasonable dense set of $\ba_{2\a}$. In the present section, we explain how to overcome this unpleasant feature by modifying the definition of the Dirac operator.

To this aim, fix any orientation preserving diffeomorphism $\mathpzc{f}$ of the circle satisfying {\bf(A)} w.r.t. a rotation by the angle $4\pi\a$, and set $T:=\mathpzc{h}_\mathpzc{f}\circ R_{\a}\circ \mathpzc{h}_\mathpzc{f}^{-1}$ ({\it cf.} Section \ref{t3rp}). With
\begin{equation}
\label{zdfz}
\mathrm{L}=\bigoplus_{n\in\bz}\bigg(\imath z\frac{\di\,\,}{\di z}-a_nI\bigg),
\end{equation}
and
$$
a_n:=\sign(n)\sum_{l=1}^{|n|}\frac1{\G_{l-\frac{1-\sign(n)}2}(T^2)},\quad n\in\bz,
$$
we define 
$\mathrm{D}=\bigoplus_{n\in\bz}\mathrm{D}_n$, where
$$
\mathrm{D}_n=\begin{pmatrix} 
	 0 &\mathrm{L}_n\\
	\mathrm{L}_n^*& 0\\
     \end{pmatrix}:=\begin{pmatrix} 
	 0 &\left(\imath z\frac{\di\,\,}{\di z}-a_nI\right)\\
	\left(-\imath z\frac{\di\,\,}{\di z}-a_nI\right)& 0\\
     \end{pmatrix}.
$$
The associated deformed Dirac operator assumes the form
$$
\mathrm{D}^\s=\bigoplus_{n\in\bz}\mathrm{D}_n^\s=\bigoplus_{n\in\bz}\begin{pmatrix} 
	 0 &M_{\d_n^{-1}}\mathrm{L}_n\\
	\mathrm{L}_n^*M_{\d_n^{-1}}& 0\\
     \end{pmatrix}
     =\begin{pmatrix} 
	 0 &\D^{-1}\mathrm{L}\\
	\mathrm{L}^*\D^{-1}& 0\\
     \end{pmatrix}.
$$
Notice that such Dirac operators (twisted and untwisted) coincide with the canonical ones for the representation associated with the trace.  

As in Section \ref{dirac} for the untwisted Dirac operator $D$, we recognise that $\mathrm{D}_n$ is invertible when $n\neq0$. We set (with a little abuse of notation)
\begin{equation*}
\mathrm{D}^{-1}_0:=\mathrm{D}^{-1}_0P^\perp_{{\rm Ker}(\mathrm{D}_0)},
\end{equation*}
and see that each $\mathrm{D}_n$ is invertible with bounded inverse
\begin{equation*}
\mathrm{D}_0^{-1}=\sum_{m\in\bz\backslash\{0\}}\frac{P_{m,0}^+-P_{m,0}^-}{|m|},\quad
\mathrm{D}_n^{-1}=\sum_{m\in\bz}\frac{P_{m,n}^+-P_{m,n}^-}{\sqrt{m^2+a_n^2}},
\end{equation*}
where $P_{m,n}^\pm$ are the selfadjoint projections onto the corresponding sspectral subspaces. This immediately implies that $\mathrm{D}^{-1}_n$ is a compact operator for each $n\in\bz$.  

The same proof as that in Proposition \ref{dsukz} shows that $\mathrm{D}^\s$ is selfadjoint on the domain 
$$
\cd_{\mathrm{D}^\s}:=\bigg\{\xi\in\bigoplus_{n\in\bz}\cd_{\mathrm{D}^\s_n}\mid\sum_{n\in\bz}\|\mathrm{D}^\s_n\xi_n\|^2<+\infty\bigg\},
$$
where $\cd_{\mathrm{D}^\s_n}=H^1(\bt)\oplus H^1(\bt)$ for each $n\in\bz$. In addition, with
$$
(\mathrm{D}^{\s}_0)^{-1}:=(\mathrm{D}^{\s}_0)^{-1}P^\perp_{{\rm Ker}(\mathrm{D}^{\s}_0)},
$$
all $\mathrm{D}^{\s}_n$ have a bounded inverse, and therefore $\cd_{\mathrm{D}^\s}$ has a bounded inverse if and only if
$$
\sup_{n\in\bz}\big\|(\mathrm{D}^\s_n)^{-1}\|<+\infty.
$$
Finally, concerning the compactness of the resolvent of $\mathrm{D}^\s$, we look at the asymptotic of $\|(\mathrm{D}_n^\s)^{-1}\|$ for $n\to\infty$ obtaining
\begin{thm}
\label{t22}
The Dirac operator $\mathrm{D}^\s$ has compact resolvent if and only if 
\begin{equation*}
\lim_{n\to\infty}\big\|(\mathrm{D}^\s_n)^{-1}\big\|_{L^2(\bt, m)}=0,
\end{equation*}
and if 
$$
\G_{n}(T^2)=o(\ln n).
$$
\end{thm}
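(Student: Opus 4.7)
The plan is to mirror the two-step strategy already used for Theorem \ref{t2}, replacing the unbounded sequence $\{n\}_{n\in\bz}$ appearing in the spectrum of $L_n$ by the sequence $\{a_n\}_{n\in\bz}$ entering $\mathrm{L}_n$, and then to estimate $|a_n|$ from below using the hypothesis $\G_n(T^2)=o(\ln n)$.

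First I would establish the equivalence. Since $\mathrm{D}^\s$ is selfadjoint on $\cd_{\mathrm{D}^\s}$ by the same argument as in Proposition \ref{dsukz}, it has compact resolvent if and only if $(\mathrm{D}^\s)^{-1}$, defined as in \eqref{1df1}--\eqref{1czadf19} but with $\mathrm{D}_n^\s$ in place of $D_n^\s$, is compact. Each direct summand $(\mathrm{D}_n^\s)^{-1}$ is compact, being unitarily similar via the conjugation by $e^{K_n}$ (as in \eqref{nsssd}) to the compact operator $\mathrm{D}_n^{-1}$. Hence, exactly as in the proof of Theorem \ref{t2}, the compactness of the block-diagonal operator $(\mathrm{D}^\s)^{-1}=\bigoplus_{n\in\bz}(\mathrm{D}_n^\s)^{-1}$ is equivalent to $\|(\mathrm{D}^\s_n)^{-1}\|\to0$: sufficiency follows by norm-approximation with $(\mathrm{D}^\s)^{-1}(P_N\oplus P_N)$; necessity follows by producing a bounded sequence of orthogonal unit vectors lying in distinct direct summands and having images uniformly bounded away from zero.

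For the sufficient condition, I would estimate $\|(\mathrm{D}^\s_n)^{-1}\|$ in analogy with Lemma \ref{gamrh}. For $n\neq 0$, $|a_n|\geq 1$ so $\mathrm{L}_n$ is invertible on $L^2(\bt,m)$ with $\|\mathrm{L}_n^{-1}\|=1/|a_n|$, and hence
$$
\big\|(\mathrm{D}^\s_n)^{-1}\big\|=\big\|\mathrm{L}_n^{-1}M_{\d_n}\big\|\leq\big\|M_{\d_n}\big\|\,\big\|\mathrm{L}_n^{-1}\big\|\leq\frac{\G_{|n|}(T^2)}{|a_n|}.
$$
The point of the construction of $\mathrm{L}$ in \eqref{zdfz} is precisely that $|a_n|$ grows fast enough to dominate $\G_{|n|}(T^2)$. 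Assuming $\G_l(T^2)=o(\ln l)$, pick $C>0$ such that $\G_l(T^2)\leq C\ln l$ for all $l\geq 2$; then
$$
|a_n|\geq\sum_{l=2}^{|n|}\frac{1}{\G_l(T^2)}\geq\frac{1}{C}\sum_{l=2}^{|n|}\frac{1}{\ln l}\geq\frac{|n|}{C'\ln|n|}
$$
for $|n|$ large, by elementary integral comparison. Combining the two estimates gives
$$
\big\|(\mathrm{D}^\s_n)^{-1}\big\|\leq\frac{C'\G_{|n|}(T^2)\ln|n|}{|n|}\leq\frac{CC'(\ln|n|)^2}{|n|}\xrightarrow[|n|\to\infty]{}0,
$$
so by the first half of the statement $\mathrm{D}^\s$ has compact resolvent.

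The only delicate point is the lower bound on $|a_n|$; the rest is a transparent adaptation of Section \ref{dirac}. In particular, the careful choice of the sign index $l-\tfrac{1-\sign(n)}{2}$ in the definition of $a_n$ is immaterial for the asymptotic lower bound, which depends only on the growth rate $\G_l(T^2)=o(\ln l)$ as $l\to+\infty$.
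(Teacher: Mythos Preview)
Your proof is correct and follows essentially the same route as the paper: both reduce the second half to the Lemma~\ref{gamrh}-style estimate $\big\|(\mathrm{D}^\s_n)^{-1}\big\|\leq\G_{|n|}(T^2)/|a_n|$ and then bound $|a_n|$ from below via integral comparison with $\int_2^{|n|}\frac{\di x}{\ln x}$ (the paper invokes the logarithmic integral $\mathrm{Li}(x)\geq x/\ln x-2/\ln 2$ explicitly, while you use the equivalent elementary bound). Two harmless slips worth fixing: the claim $|a_n|\geq 1$ is false in general (for instance $|a_1|=1/\G_1(T^2)\leq 1$), though all you need and use is $a_n\neq 0$; and the conjugation by $e^{K_n}$ is not \emph{unitary} since $K_n$ is selfadjoint, but similarity via a bounded invertible operator still preserves compactness, so the conclusion stands.
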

\begin{proof}
The proof of the first assertion proceeds exactly in the same way as the analogous one in Theorem \ref{t2}. For the second half, reasoning as in Lemma \ref{gamrh}, for $|n|$ sufficiently large we have
\begin{align*}
\big\|(\mathrm{D}_n^\s)^{-1}\big\|\leq&\frac{\G_{|n|}(T^2)}{\sum_{l=1}^{|n|}\frac1{\G_{l-\frac{1-\sign(n)}2}(T^2)}}
=\frac1{\frac1{\G_{|n|}(T^2)}\sum_{l=1}^{|n|}\frac1{\G_{l-\frac{1-\sign(n)}2}(T^2)}}\\
\leq&\frac1{\frac1{\ln |n|}\sum_{l=2}^{|n|}\frac1{\ln l}}
\sim\frac1{\frac1{\ln |n|}\int_{{}_2}^{{}^|n|}\frac{\di x}{\ln x}}.
\end{align*}
The assertion then follows since, for the 
logarithmic integral function,
$$
{\rm Li}(x)\geq\frac{x}{\ln x}-\frac{2}{\ln 2}.
$$
\end{proof}
As in \eqref{spetr112}, we set for the associated deformed commutator,
\begin{equation}
\label{spetr232}
\cd^{\s}_\mathrm{L}(A)=\imath
\begin{pmatrix} 
	&\D^{-1}[\mathrm{L},A]\\
	[\mathrm{L}^*,A]\D^{-1}& 0\\
     \end{pmatrix}.
\end{equation}
We now pass to show that $\cd^\s_\mathrm{L}(\pi_\om(a))$ uniquely defines a bounded operator, provided $a\in\ba^{oo}_{2\a}$. For such $A\in\pi_\om(\ba_{2\a}^{oo})$, we first notice that 
$$
A\cd_{\mathrm{L}}\subset\cd_{\mathrm{L}},\quad A\cd_{\mathrm{L^*}}\subset\cd_{\mathrm{L^*}}
$$
({\it i.e.} (iv) in Definition \ref{fmst} is satisfied). Therefore,
\begin{align*}
\cd_{\cd^\s_\mathrm{L}(A)}=&\bigg\{\xi,\eta\in\bigoplus_{n\in\bz}H^1(\bt)\mid\sum_{n\in\bz}\big(\big\|([\mathrm{L}^*,A]\D^{-1}\xi)_n\big\|^2\\
+&\big\|M_{\d_n^{-1}}([\mathrm{L},A]\eta)_n\big\|^2\big)<+\infty\bigg\}.
\end{align*}
We then get the following
\begin{thm}
\label{mdefca}
If $A\in\pi_\om(\ba_{2\a}^{oo})$, then the deformed commutator \eqref{spetr232} uniquely defines a bounded operator acting on $\ch_\om\oplus\ch_\om$.
\end{thm}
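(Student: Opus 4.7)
The plan is to expand $A=\sum_{k\in J}A_k\l^k$, where $A_k:=\pi_\om(W(f_k))$ is diagonal on $\ch_\om$ acting on the $n$-th summand as the multiplication operator $M_{F_k\circ T^{2n}}$ (with $F_k\in\cc_0$), and $\l$ denotes the one-step shift of \eqref{lash}. By the Leibniz rule, both $\D^{-1}[\mathrm{L},A_k\l^k]$ and $[\mathrm{L}^*,A_k\l^k]\D^{-1}$ split into a piece in which the commutator falls on the multiplier $A_k$ and a piece in which it falls on the shift $\l^k$. Since $A_k$ and $\D$ are both diagonal multiplication operators on each summand of $\ch_\om$, they commute, so we may freely rearrange $\D^{-1}$ and $A_k$ in each term.

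For the first piece, a direct computation shows that $[\mathrm{L},A_k]$ acts on the $n$-th summand as multiplication by $\imath z(DF_k)(T^{2n}(z))\,DT^{2n}(z)$. Combining with the explicit formula $\d_n(z)^{-1}=T^{2n}(z)/\bigl(z\,DT^{2n}(z)\bigr)$ from \eqref{rndica}, the derivative factor telescopes, and $\D^{-1}[\mathrm{L},A_k]$ reduces to diagonal multiplication by $\imath T^{2n}(z)(DF_k)(T^{2n}(z))$, uniformly bounded in $n$ by $\|DF_k\|_\infty$. The corresponding piece $[\mathrm{L}^*,A_k]\l^k\D^{-1}$ involves the product $DT^{2n}\cdot\d_{n-k}^{-1}$; the chain rule identity $DT^{2n}=(DT^{2k})(T^{2(n-k)})\cdot DT^{2(n-k)}$ allows the same cancellation, leaving a factor of uniform norm at most $\|DF_k\|_\infty\,\G_{|k|}(T^2)$. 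Thus the first pieces contribute uniformly bounded direct summands.

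The second piece is where the specific design of the scalars $a_n$ becomes decisive. A direct calculation gives $\bigl([\mathrm{L},\l^k]\xi\bigr)_n=(a_{n-k}-a_n)\xi_{n-k}$, and identically for $[\mathrm{L}^*,\l^k]$. Thus the operator norms on the $n$-th summand of $\D^{-1}A_k[\mathrm{L},\l^k]$ and $A_k[\mathrm{L}^*,\l^k]\D^{-1}$ are bounded respectively by $\|F_k\|_\infty\,\|\d_n^{-1}\|_\infty\,|a_{n-k}-a_n|$ and $\|F_k\|_\infty\,\|\d_{n-k}^{-1}\|_\infty\,|a_{n-k}-a_n|$. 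From \eqref{rndica} and the identity $DT^{2n}(T^{-2n}(z))=1/DT^{-2n}(z)$ one has $\|\d_n^{-1}\|_\infty\le\G_{|n|}(T^2)$, so the whole argument reduces to the key estimate
$$
\sup_{n\in\bz}\G_{|n|}(T^2)\,|a_{n-k}-a_n|<+\infty,\qquad k\in J.
$$
For, say, $n\ge k>0$, the telescoping formula yields $a_n-a_{n-k}=\sum_{j=n-k+1}^{n}\G_j(T^2)^{-1}$, and the submultiplicativity $\G_{a+b}(T^2)\le\G_a(T^2)\,\G_b(T^2)$, which follows from the chain rule applied to compositions of same-sign powers of $T^2$, gives
$$
\G_n(T^2)\,|a_n-a_{n-k}|=\sum_{j=n-k+1}^{n}\frac{\G_n(T^2)}{\G_j(T^2)}\le k\,\G_{k-1}(T^2).
$$

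The main obstacle is to verify the previous estimate uniformly in $n\in\bz$ when $n$ and $n-k$ have opposite signs, so that the telescoping sum defining $a_n-a_{n-k}$ straddles the origin and the sign-dependent shift $(1-\sign(n))/2$ in the definition of $a_n$ intervenes. In that regime the sum splits into a positive and a negative block, each handled by submultiplicativity exactly as above, producing a bound of order $|k|\,\G_{|k|}(T^2)$ that is again uniform in $n$. Adding the finitely many contributions over $k\in J$ then shows that $\cd_{\mathrm{L}}^{\s}(A)$ is a direct sum of uniformly bounded operators on each summand of \eqref{accam}; hence it is densely defined on vectors with finite support in $n$ with finite uniform norm, and a standard closure argument provides the unique bounded extension to $\ch_\om\oplus\ch_\om$.
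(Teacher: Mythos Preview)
Your proof is correct and follows the same structure as the paper's: split $A=\sum_{k\in J}A_k\l^k$, apply the Leibniz rule, and treat the diagonal and shift contributions separately. The only real difference is organizational. The paper reduces the shift piece all the way down to the single generator $\l$ (and $\l^{-1}$), exploiting the exact identity $|a_{n-1}-a_n|=1/\G_{|n|}(T^2)$ together with the one-step ratio bound $\G_{n\pm1}/\G_n\le\G_1$; the passage to $\l^k$ is then implicit in the Leibniz reduction (via the boundedness of $\D^{-1}\l\D$). You instead compute $[\mathrm{L},\l^k]$ in one shot and control $\G_{|n|}\,|a_n-a_{n-k}|$ directly via the submultiplicativity $\G_{a+b}(T^2)\le\G_a(T^2)\,\G_b(T^2)$. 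Both routes rest on the same chain-rule input and give the same uniform-in-$n$ bound.

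Two small slips to clean up. First, when you write $M_{F_k\circ T^{2n}}$ with $F_k\in\cc_0$, the multiplier in the representation \eqref{ggnnss666} is actually $F_k\circ\mathpzc{h}_{T^2}^{-1}\circ T^{2n}$; equivalently $G_k\circ T^{2n}$ with $G_k:=F_k\circ\mathpzc{h}_{T^2}^{-1}\in C^1(\bt)$, so your ``$DF_k$'' should be $DG_k$. Second, your displayed estimate $\sum_{j=n-k+1}^{n}\G_n/\G_j\le k\,\G_{k-1}$ does not follow as stated (the $\G_i$ need not be monotone); what submultiplicativity gives is $\G_n/\G_j\le\G_{n-j}$ and hence the bound $\sum_{i=0}^{k-1}\G_i(T^2)$, which is still a constant depending only on $k$ and suffices for uniformity in $n$.
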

\begin{proof}
By taking into account the commutation rule
$$
[\mathrm{L}^\#, AB]=A[\mathrm{L}^\#, B]+[\mathrm{L}^\#, A]B,
$$
where $\mathrm{L}^\#$ stands for $\mathrm{L}$ or $\mathrm{L}^*$, we can reduce the matter to the algebraic generators of $\ba_{2\a}^{oo}$. 

First, for $A=\pi_\om(W(f))$ with $f$ as in (i) of Theorem \ref{sczptr}, we have $\cd^{\s}_\mathrm{L}(A)=\cd^{\s}_{L}(A)$ for $L$ given in \eqref{eldcz}, and thus the claim follows. 

Second, for $A=\l$ the one-step shift given in \eqref{lash}, together with its adjoint $\l^{-1}$, we first note that for each $n\in\bn$,
$$
\G_{n+1}(T^2)/\G_{n}(T^2),\,\G_{n}(T^2)/\G_{n+1}(T^2)\leq\G_{1}(T^2),
$$
which can be checked by the simple formulas
$$
D\mathpzc{f}^{n+1}=D\mathpzc{f}\times\big((D\mathpzc{f}^n)\circ\mathpzc{f}\big),\quad D\mathpzc{f}^{n-1}=D\mathpzc{f}^{-1}\times\big((D\mathpzc{f}^n)\circ\mathpzc{f}\big).
$$
For each $n\in\bz$, we notice that
$$
|a_{n-1}-a_n|=\frac1{\G_{|n|}(T^2)},
$$
and then compute
\begin{align*}
\left\|\left(\D^{-1}[\mathrm{L},\l]g\right)_n\right\|&=\left\|(a_{n-1}-a_n)M_{\d_n^{-1}}\big(\l g\big)_n\right\|\\
\leq&|a_{n-1}-a_n|\G_{|n|}(T^2)\|g\|=\|g\|,
\end{align*}
with the analogous one 
$$
\left\|\left([\mathrm{L}^*,\l]\D^{-1}g\right)_n\right\|\leq|a_{n-1}-a_n|\G_{|n-1|}(T^2)\|g\|\leq \G_{1}(T^2)\|g\|.
$$
The formulas for $\l^{-1}$ follow from the previous ones by taking the $*$-operation.

Hence, $\cd^{\s}_\mathrm{L}(\l)$ defines a bounded operator as well.
\end{proof}
\begin{cor}
\label{vdfv}
With the same notations in Theorem \ref{sczptr}, consider $\om\equiv\om_\m\in\cs(\ba_{2\a})$ given in \eqref{ommu} for $\m=m\circ \mathpzc{h}_\mathpzc{f}$, and $\mathrm{L}$ in \eqref{zdfz}.
\begin{itemize}
\item[(i)] The triplet $\big(\om,\pi_\om(\ba_{2\a}^{oo}), \mathrm{L}\big)$ satisfies (i)-(iv) of Definition \ref{fmst}, provided that $\G_n(f)=o(\ln n)$.
\item[(ii)] The type of the hyperfinite von Neumann factor $\pi_\om(\ba_{2\a})''$ is determined by the Krieger-Araki-Woods ratio set $r([m],\mathpzc{f})$, if it is not of type $\ty{II_1}$.
\end{itemize}
\end{cor}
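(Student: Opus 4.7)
The plan is to verify conditions (i)--(iv) of Definition \ref{fmst} one by one by invoking the results assembled in the preceding sections, and then to obtain assertion (ii) as a direct consequence of Theorem \ref{thmcz}.

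For condition (i), namely that $s(\om)\in Z(\ba_{2\a}^{**})$, I would argue as follows. Since $\m=m\circ\mathpzc{h}_{\mathpzc{f}}$ with $\mathpzc{f}=T^2$, Lemma \ref{qihk} shows that $\m_{-n}\sim\m_n$ for every $n\in\bz$; in particular $\m\circ R^{2n}\preceq\m$ for all $n\in\bz$. Proposition \ref{st1} then yields the centrality of the support of $\om_\m$ in the bidual.

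For condition (ii), the hypothesis $\G_n(\mathpzc{f})=o(\ln n)$ is precisely the growth condition needed to apply Theorem \ref{t22}: combined with the fact that each $\mathrm{D}_n^\s$ has bounded inverse and that the coefficients $a_n$ were designed exactly to realise the comparison $\|(\mathrm{D}_n^\s)^{-1}\|\lesssim \G_{|n|}(T^2)/\sum_{l=1}^{|n|}\G_l(T^2)^{-1}$, Theorem \ref{t22} delivers compactness of $(\mathrm{D}^\s)^{-1}$ and hence of the resolvent. For condition (iii), I would simply invoke Theorem \ref{mdefca}, which establishes that $\cd^\s_\mathrm{L}(\pi_\om(a))$ uniquely defines a bounded operator for every $a\in\ba_{2\a}^{oo}$; the key input there is the uniform bound $|a_{n-1}-a_n|\G_{|n|}(T^2)\leq 1$, which absorbs the unbounded modular weight $M_{\d_n^{-1}}$ and replaces the obstruction \eqref{medr} by the controlled quantity $\G_{n+1}(T^2)/\G_n(T^2)\leq \G_1(T^2)$.

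For condition (iv), I would imitate the argument in Corollary \ref{enni}(i): from the definition of $\ba_{2\a}^{oo}$ and from \eqref{pfircl}, the action of $\pi_\om(\ba_{2\a}^{oo})$ on a vector $\xi$ with $\xi_n\in H^1(\bt)\oplus H^1(\bt)$ produces again components in $H^1(\bt)\oplus H^1(\bt)$ (this uses only that $F\circ T^{2n}\in C^1(\bt)$ when $F\in C^1(\bt)$, together with $\l\cd_\mathrm{L}\subset\cd_\mathrm{L}$, which is immediate since $\l$ acts as a shift on the $\bz$-index and leaves the $L^2$-domain invariant); the summability conditions propagate because the multipliers involved are uniformly bounded on the $N$-th block once $A$ is fixed. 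Finally, (ii) of the corollary is a restatement of the last part of Theorem \ref{thmcz}, since the representation $\pi_\om$ and the measure $\m=m\circ\mathpzc{h}_\mathpzc{f}$ are exactly those considered there, and the equivalence $r([\m],R_{2\a})=r([m],\mathpzc{f})$ was already noted in that theorem.

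The only nontrivial ingredient is the verification of (ii), which is why the ``ultra-Liouville'' hypothesis and the sharper growth estimate $\G_n(\mathpzc{f})=o(\ln n)$ from Proposition \ref{catsum}(vii) are indispensable. Everything else in the corollary is essentially bookkeeping, cross-referencing earlier sections.
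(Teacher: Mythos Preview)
Your proposal is correct and follows essentially the same approach as the paper, which simply says the result follows by collecting together Theorem \ref{t22} and Theorem \ref{mdefca}. If anything, your version is more complete: the paper's one-line proof only explicitly cites the results covering conditions (ii) and (iii) of Definition \ref{fmst}, whereas you also spell out why (i) holds (via Lemma \ref{qihk} and Proposition \ref{st1}), why (iv) holds (as in Corollary \ref{enni}), and why assertion (ii) follows from Theorem \ref{thmcz}.
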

\begin{proof}
It immediately follows  collecting together Theorem \ref{t22} and Theorem \ref{mdefca}.
\end{proof}
Therefore, by Proposition \ref{catsum} we can thus exhibit non type $\ty{II_1}$ modular spectral triples of the noncommutative torus $\ba_{2\a}$ satisfying all requirements in Definition \ref{fmst}, provided that $\a$ satisfies the fast approximation property {\bf(UL)}.

\section{Outlook}
\label{ocfl}

\noindent
For the convenience of the reader, we end the present paper by writing down a list, which is very far from being complete, of some open questions related to the analysis carried out in the present paper.
\begin{itemize}
\item[(a)] We point out that the analysis concerning all natural properties encoded in the new spectral triples introduced in the present paper in a type $\ty{III}$ setting, deserves a detailed investigation as explained in \cite{CM, V}. As a first step, we mention the definition/construction of the related (probably even twisted) Fredholm  modules.
\end{itemize}
Concerning the properties of the orientation preserving diffeomorphisms $\mathpzc{f}$ of the circle as those considered in the present paper, the construction of non type $\ty{II_1}$ representations and the corresponding modular spectral triples relies on the combined control of two crucial properties: the ratio set $r([m],\mathpzc{f})$ of the classical dynamical system 
$(\bt,\mathpzc{f},m)$, and the growth sequence $\G_n(\mathpzc{f})$. 
As intermediate results of self containing interest, we have shown that $\G_n(\mathpzc{f})=o(n)$ for all diffeomorphisms in Proposition 2.1 of \cite{M}.
Moreover, if the involved Liouville number $\a$ satisfies the faster approximation condition ${\bf (UL)}$ introduced in Section \ref{sec2}, it is possible to construct by the same methods as in \cite{M}, diffeomorphisms $\mathpzc{f}$ of the unit circle such that the ratio set $r([m],\mathpzc{f})$ is of preassigned kind, and in addition $\G_n(\mathpzc{f})=o(\ln n)$. We argue that the analysis of \cite{M}, on which Section \ref{difa} is based, might be further extended as follows.
\begin{itemize}
\item[(b)] For each positive monotone sequence $b_n\to+\infty$ and ratio set of preassigned type ${\rm F}$, there would exist a set of Liouville numbers $\a\in(0,1)$ (indeed a dense set in $[0,1]$ depending on the sequence $(b_n)_n$) and a set of diffeomorphisms $\mathpzc{f}$ of the circle fulfilling {\bf(A)}, for which $r([m],\mathpzc{f})={\rm F}$ and $\G_n(\mathpzc{f})=o(b_n)$.
\end{itemize}
In this way, we would also provide a generalisation of Theorem 2 in \cite{W}, which asserts the control on the growth sequence without any control on the ratio set.

For orientation preserving diffeomorphisms $\mathpzc{f}$ of the circle as above, with the property that $\r(\mathpzc{f})=\a$ for an irrational number $\a$, we have looked at the unique invariant measure $\m_\mathpzc{f}$ given in \eqref{uinma}. Denote by $F_\a$ the set of such diffeomorphisms, and for $d\in[0,1]$,
$$
S^d_\a:=\big\{\mathpzc{f}\in F_\a\mid \dim_H(\m_\mathpzc{f})=d\big\},
$$
where $\dim_H$ stands for the Hausdorff dimension, see {\it e.g.} \cite{M1}. It is well known that $\dim_H(\m_\mathpzc{f})<1$ implies that $\m_\mathpzc{f}\perp m$, and thus $\mathpzc{f}$ is not of type $\ty{II_1}$. 
\begin{itemize}
\item[(c)] It would be of interest to prove or disprove at various levels the following conjecture that links the Hausdorff dimension of the unique invariant measure to the ratio set:
\begin{itemize}
\item[] $\mathpzc{f}\in S^1_\a\iff\mathpzc{f}$ is of type $\ty{II_1}$,
\item[] $\mathpzc{f}\in\bigcup_{d\in(0,1)}S^d_\a\iff\mathpzc{f}$ is of type $\ty{II_\infty}$,
\item[] $\mathpzc{f}\in S^0_\a\iff\mathpzc{f}$ is of type $\ty{III}$.
\end{itemize}
\end{itemize}
Concerning diophantine numbers $\a\in(0,1)$, the method developed in the present paper might suggest the way to construct non type $\ty{II_1}$ representations, together with the corresponding modular spectral triples.
As we have previously shown, the construction of modular spectral triples with the desired properties is based on the fact that the Borel automorphisms of \cite{K4} have to be sufficiently smooth. It relies on the knowledge of the asymptotic of the growth sequence of such diffeomorphisms.
The natural candidates would be orientation preserving (non sufficiently smooth) $C^1$-diffeomorphisms $\mathpzc{f}$ of the circle with $\r(\mathpzc{f})=\a$ which are topologically transitive, 
and therefore satisfy {\bf (A)} for $\mathpzc{h}_\mathpzc{f}$ a non smooth homeomorphism. More precisely,
\begin{itemize}
\item[(d)] for diophantine numbers $\a\in(0,1)$, to construct examples of such $C^1$-diffeomorphisms for any given type $\ty{II_\infty}$, and $\ty{III_\l}$, $\l\in[0,1]$, together with an appropriate control of the growth sequence.
\end{itemize}
Finally, we would like to mention the following potential applications to the physical scenario.
\begin{itemize}
\item[(e)] There might be possible applications of the new non type $\ty{II_1}$ representations in the context of the quantum Hall effect, see {\it e.g.} \cite{BS, MP}. 
\item[(f)] We also note the possible generalisation of the present analysis to arbitrary CCR algebras based on a locally compact abelian group equipped with a symplectic form ({\it cf.} \cite{Z}), and in particular to those describing physical systems with finite degrees of freedom.
\end{itemize}

\section{Appendix}
\label{ppa}

\noindent
The dense $*$-algebra $\ba^{oo}_{2\a}$ appearing in the definition of the modular spectral triple associated with the noncommutative torus cannot be closed under the entire functional calculus. For untwisted spectral triples, the algebra of the functions for which the derivation generated by the associated (untwisted) Dirac operator provides a bounded operator, can be enlarged to include at least the smooth ({\it i.e.} $C^\infty$) functional calculus, see {\it e.g.} \cite{BC}. Unfortunately, there is no investigation for the same question relatively to general twisted spectral triples. Therefore, we will show how to enlarge the algebra $\ba^{oo}_{2\a}$ in Corollary \ref{vdfv}
in order to satisfy such a requirement, perhaps expected, of smoothness.

Consider $f\in\cb(\bz^2)$ such that $W(f)\in\ba_{2\a}$. For $k\in\bn$, $l=0,1$, define the following sequence of seminorms
\begin{equation}
\label{semi}
\r_{k,l}\big(W(f)\big):=\sup_{n\in\bz}\left\{(|n|+1)^k\left\|D^l\left(\widecheck{f^{(n)}}\circ R^{-n}\circ \mathpzc{h}_{T^2}^{-1}\right)\right\|_\infty\right\},
\end{equation}
provided $\widecheck{f^{(n)}}\circ R^{-n}\circ \mathpzc{h}_{T^2}^{-1}\in C^1(\bt)$, $n\in\bz$. Define
$$
\ba^{o}_{2\a}:=\bigg\{W(f)\mid\r_{k,l}\big(W(f)\big)<+\infty,\,\, k\in\bn,\, l=0,1\bigg\}.
$$
The algebra $\ba^{o}_{2\a}$ will play the role of the algebra of the ``smooth'' functions $\ca$ in Definition \ref{fmst}. Notice that, in the more interesting cases when the homomorphism $\mathpzc{h}_\mathpzc{f}$ in {\bf(A)} of Section \ref{sec2} (where $\mathpzc{f}=T^2$ with $\r(\mathpzc{f})=2\a$ as usual) is not smooth, which necessarily happens in non type $\ty{II_1}$ cases, the ``smooth'' generator of the ``coordinate functions'' acting on $L^2(\bt,m)$ is $M_z=\pi_\om(W(f_1))$, with 
$$
f_1(m,n)=\widehat{\mathpzc{h}_\mathpzc{f}}(m)\d_{n,0} \,,
$$
instead of 
$$
\pi_\om(W(f_2))=M_{\mathpzc{h}_\mathpzc{f}^{-1}(z)},
$$
with $f_2(m,n)=\d_{m,1}\d_{n,0}$. If $\mathpzc{h}_\mathpzc{f}$ is smooth ({\it i.e.} for the type $\ty{II_1}$ case), both functions generate dense $*$-algebras of the smooth functions. In the non type $\ty{II_1}$ cases, both generators generate the maximal abelian subalgebra consisting of a copy of the algebra of continuous functions $C(\bt)$ in $\pi_\om(\ba_{2\a})$. But the ``coordinate function'' corresponding to $\pi_\om(W(f_2))$ generates an algebra that acts through functions which are not smooth as expected.

We now provide some preparatory results.
\begin{lem} 
\label{dlct}
$\ba^{oo}_{2\a}\subset \ba^{o}_{2\a}$ is dense in $\ba^{o}_{2\a}$ in the locally convex topology generated by the collection of seminorms $\{\r_{k,l}\mid k\in\bn,\, l=0,1\}$ given in \eqref{semi}.
\end{lem}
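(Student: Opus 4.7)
The plan is to approximate any $W(f)\in\ba^{o}_{2\a}$ by its truncations in the $n$-variable, exploiting the decay that the higher-order seminorms provide for the lower-order ones. To streamline, I set $G_n(f):=\widecheck{f^{(n)}}\circ R^{-n}\circ\mathpzc{h}_{T^2}^{-1}$, so that $\r_{k,l}(W(f))=\sup_{n\in\bz}(|n|+1)^k\|D^l G_n(f)\|_\infty$. The first substantive step is an intrinsic characterisation of $\ba^{oo}_{2\a}$: unwinding the definition $f=\sum_{k\in J}(f_k*_{2\a}g_k)$ given in \eqref{prrcsalcz} shows that a generic $W(g)\in\ba^{oo}_{2\a}$ has Fourier coefficients of the form $\widecheck{g^{(n)}}=F_n\circ R^{-n}$ with $F_n\in\cc_0$, supported on a finite set of $n$'s. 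Using the conjugacy $\mathpzc{h}_{T^2}\circ R^{2n}\circ\mathpzc{h}_{T^2}^{-1}=\mathpzc{f}^n$ (valid because $\mathpzc{f}=T^2$), this rewrites as $G_n(g)=H_n\circ\mathpzc{f}^{-n}$ where $F_n=H_n\circ\mathpzc{h}_{T^2}$; since $\mathpzc{f}^n$ is a smooth diffeomorphism, $G_n(g)\in C^1(\bt)$ iff $H_n\in C^1(\bt)$ iff $F_n\in\cc_0$. Hence $W(g)\in\ba^{oo}_{2\a}$ exactly when $G_n(g)\in C^1(\bt)$ and is non-zero for only finitely many $n\in\bz$.

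Given $W(f)\in\ba^{o}_{2\a}$, I would define the truncation $f_N(m,n):=f(m,n)\chi_{\{|n|\leq N\}}$. Since $\widecheck{f^{(n)}}=G_n(f)\circ\mathpzc{h}_{T^2}\circ R^n\in C(\bt)$ for each $n$, $W(f_N)$ is a finite sum of Fourier ``slices'' of $W(f)$, hence a well-defined element of $\ba_{2\a}$. The hypothesis $\r_{k,l}(W(f))<+\infty$ for every $k,l$ forces each $G_n(f)$ to lie in $C^1(\bt)$, so by the characterisation above $W(f_N)\in\ba^{oo}_{2\a}$.

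The remainder $W(f)-W(f_N)$ has Fourier coefficients supported in $|n|>N$, whence
\[
\r_{k,l}(W(f)-W(f_N))=\sup_{|n|>N}(|n|+1)^k\|D^l G_n(f)\|_\infty\leq\frac{\r_{k+1,l}(W(f))}{N+1}\longrightarrow 0
\]
as $N\to+\infty$, since $\r_{k+1,l}(W(f))<+\infty$. A neighborhood basis of zero in the locally convex topology is generated by finite intersections of the form $\{x\mid\r_{k_j,l_j}(x)<\eps\}$, so a single large $N$ serves all constraints simultaneously; this yields the density. The only delicate point is really the intrinsic description of $\ba^{oo}_{2\a}$ via the functions $G_n$: once the conjugacy $\mathpzc{h}_{T^2}\circ R^{2n}\circ\mathpzc{h}_{T^2}^{-1}=\mathpzc{f}^n$ is invoked to absorb the (possibly non-smooth) homeomorphism $\mathpzc{h}_{T^2}$ into the smooth diffeomorphism $\mathpzc{f}^n$, the remainder of the argument collapses to the elementary truncation estimate displayed above.
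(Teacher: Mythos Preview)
Your proof is correct and follows exactly the same truncation strategy as the paper: restrict $f$ to $|n|\leq N$, identify the truncation as an element of $\ba^{oo}_{2\a}$, and use the bound $\r_{k,l}\big(W(f)-W(f_N)\big)\leq\r_{k+1,l}\big(W(f)\big)/(N+1)$. One minor computational slip: unwinding \eqref{prrcsalcz} actually gives $\widecheck{g^{(n)}}=F_n\circ R^{n}$ (not $R^{-n}$), whence $G_n(g)=H_n$ directly with no need for the conjugacy detour through $\mathpzc{f}^{-n}$; since $\mathpzc{f}^{-n}$ is a smooth diffeomorphism your conclusion is unaffected.
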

\begin{proof}
For $W(\f)\in\ba_{2\a}^o$, then
$$
\widecheck{\f^{(n)}}=H_n\circ\mathpzc{h}_{T^2}\circ R^n,\quad n\in\bz,
$$
where $\{H_n\mid n\in\bz\}\subset C^1(\bt)$. Let $f_k(m,n)=\widehat{H_k\circ\mathpzc{h}_{T^2}}(m)\d_{n,0}$ for each $k\in\bz$, and define as in \eqref{prrcsalcz},
$\f_N:=\sum_{|k|\leq N}f_k*_{2\a}g_k$, $N\in\bn$,
where $g_k(m,n):=\d_{m,0}\d_{n,k}$. We then get
\begin{align*}
&\r_{k,l}\big(W(\f-\f_N)\big)=\sup_{|n|>N}\left\{(|n|+1)^k\left\|D^l\left(\widecheck{f^{(n)}}\circ R^{-n}\circ \mathpzc{h}_{T^2}^{-1}\right)\right\|_\infty\right\}\\
\leq&\frac{\sup_{|n|>N}\left\{(|n|+1)^{k+1}\left\|D^l\left(\widecheck{f^{(n)}}\circ R^{-n}\circ \mathpzc{h}_{T^2}^{-1}\right)\right\|_\infty\right\}}{|N|+1}
\leq\frac{\r_{k+1,l}\big(W(\f)\big)}{|N|+1}.
\end{align*}
Therefore, $\r_{k,l}\big(W(\f-\f_N)\big)\to0$ whenever $N\to+\infty$, for each fixed $k\in\bn$, $l=0,1$.
\end{proof}
\begin{lem}
\label{stsig}
For each $k\in\bn$, there exists a constant $B(k)>0$ such that
\begin{align*}
&\r_{k,0}\big(W(f*_{2\a} g)\big)\leq B(k)\r_{k\vee2,0}\big(W(f)\big)\r_{k\vee2,0}\big(W(g)\big),\\
&\r_{k,1}\big(W(f*_{2\a} g)\big)\leq B(k)\big(\r_{k\vee2,1}\big(W(f)\big)\r_{k\vee2,0}\big(W(g)\big)
+\r_{k\vee2+2,0}(f)\r_{k\vee2,1}(g)\big).
\end{align*}
\end{lem}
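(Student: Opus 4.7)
The plan is to reduce both inequalities to the explicit convolution formula~\eqref{vide} from the proof of Proposition~\ref{ggnnss}, followed by standard discrete convolution estimates on $\bz$. Setting $F_m:=\widecheck{f^{(m)}}\circ R^{-m}\circ\mathpzc{h}_{T^2}^{-1}$, and analogously $G_m$, $H_m$ for $g$ and $f*_{2\a}g$, I evaluate~\eqref{vide} at $R^{-n}\circ\mathpzc{h}_{T^2}^{-1}(z)$ and use the elementary identities $R^{n-l}\circ R^{-n}=R^{-l}$ and $R^{-l}\circ R^{-n}=R^{-(n-l)}\circ R^{-2l}$, together with the conjugation relation $\mathpzc{h}_{T^2}\circ R^{-2l}\circ\mathpzc{h}_{T^2}^{-1}=T^{-2l}$ (a direct consequence of $T=\mathpzc{h}_{T^2}\circ R\circ\mathpzc{h}_{T^2}^{-1}$). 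This bookkeeping produces the master identity
\[
H_n(z)=\sum_{l\in\bz}F_l(z)\,(G_{n-l}\circ T^{-2l})(z),\qquad n\in\bz,\ z\in\bt,
\]
on which both bounds rest.

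For the first inequality, composition with the homeomorphism $T^{-2l}$ preserves sup-norms, hence $\|H_n\|_\infty\leq\sum_l\|F_l\|_\infty\|G_{n-l}\|_\infty$. Inserting $\|F_l\|_\infty\leq\r_{k\vee 2,0}(W(f))(|l|+1)^{-(k\vee 2)}$ and the analogous estimate for $G$, the claim reduces to the classical discrete convolution bound
\[
\sum_{l\in\bz}\frac{1}{(|l|+1)^s(|n-l|+1)^s}\leq \frac{C(s)}{(|n|+1)^s}, \qquad s\geq 2,
\]
proved by splitting the sum at $|l|=|n|/2$ and exploiting summability for $s>1$; multiplying by $(|n|+1)^k$ (using $k\leq k\vee 2$) then gives the desired bound.

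For the second inequality, differentiating the master identity by the product and chain rules produces two summands,
\[
DH_n=\sum_{l\in\bz}\bigl[DF_l\cdot(G_{n-l}\circ T^{-2l})+F_l\cdot((DG_{n-l})\circ T^{-2l})\cdot DT^{-2l}\bigr].
\]
The first is handled exactly as in the $l=0$ case and yields the contribution $\r_{k\vee 2,1}(W(f))\,\r_{k\vee 2,0}(W(g))$. For the second I invoke $\|DT^{-2l}\|_\infty\leq\G_{|l|}(T^2)$; by Proposition~\ref{catsum}(vi), $\G_{|l|}(T^2)=o(|l|)$, so there exists $C_T>0$ with $\|DT^{-2l}\|_\infty\leq C_T(|l|+1)$ for every $l\in\bz$. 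This extra factor $(|l|+1)$ costs one power of $|l|$ in the denominator, so taking exponents $k\vee 2+2$ on the $W(f)$ side and $k\vee 2$ on the $W(g)$ side leaves the convolution $\sum_l(|l|+1)^{-(k\vee 2+1)}(|n-l|+1)^{-(k\vee 2)}$, to which the same split-sum argument applies (both exponents are now $\geq 2$). The main step, rather than an obstacle, is the derivation of the master identity via careful juggling of the conjugations and the factor $2$ coming from $T^2=\mathpzc{f}$; once it is in place, the rest is routine, the conservative choice $k\vee 2+2$ being designed to absorb simultaneously the extra power from $\G_{|l|}(T^2)$ and the $>1$-summability threshold of the convolution lemma.
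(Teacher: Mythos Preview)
Your argument is correct and follows essentially the same route as the paper: rewrite \eqref{vide} in the variables $F_l,G_l,H_l$, reduce the first inequality to the standard discrete convolution estimate $\sum_l(|l|+1)^{-s}(|n-l|+1)^{-s}\leq C(s)(|n|+1)^{-s}$ for $s\geq2$, and obtain the second inequality by differentiating and absorbing the factor $\|DT^{-2l}\|_\infty$ into two extra powers of $(|l|+1)$. Your explicit ``master identity'' $H_n=\sum_lF_l\cdot(G_{n-l}\circ T^{-2l})$ is a clean way to organise the bookkeeping; the paper performs the same manipulation but leaves it implicit inside the estimate.

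The one substantive difference is the growth bound you invoke for $\|DT^{-2l}\|_\infty$. You cite Proposition~\ref{catsum}(vi), which gives $\G_{|l|}(T^2)=o(|l|)$ and hence a linear bound $C_T(|l|+1)$; this is valid only for the specific diffeomorphisms built there. The paper instead uses the general result $\|DT^{2r}\|\leq Cr^2$ from Theorem~1 of \cite{W}, which holds for every $C^\infty$-diffeomorphism of the circle without periodic points. This quadratic bound is exactly what forces the shift $k\vee2\mapsto k\vee2+2$ in the statement; with your linear bound one could in fact get away with $k\vee2+1$, so your version proves the lemma with a little slack but under an additional hypothesis on $\mathpzc{f}$. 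Since the Appendix is ultimately applied only in the setting of Corollary~\ref{vdfv} (where the diffeomorphisms do come from Proposition~\ref{catsum}), this restriction is harmless for the intended use, but you should be aware that the lemma as stated is meant to hold for an arbitrary smooth $\mathpzc{f}$ with $\r(\mathpzc{f})=2\a$, and for that generality Watanabe's theorem is the right input.
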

\begin{proof}
Fix two positive sequences 
$$
\{f_n\mid n\in\bz\}, \{g_n\mid n\in\bz\}\subset [0,+\infty).
$$
For each $m>1$, we get
\begin{align*}
&(|n|+1)^k(f*g)(n)=\left(\sup_{n\in\bz}(|n|+1)^mf_n\right)\left(\sup_{n\in\bz}(|n|+1)^mg_n\right)\\
\times&(|n|+1)^k\sum_{r\in\bz}(|r|+1)^{-m}(|n-r|+1)^{-m}.
\end{align*}
As the above identity is symmetric under the exchange $n\to-n$, we can reduce the proof to positive numbers. Moreover, as we must check the behaviour for large $n$, first we split the sum in various pieces by solving the modulus, and then estimate each piece with integrals, thus obtaining
\begin{align*}
&\sum_{r\in\bz}(|r|+1)^{-m}(|n-r|+1)^{-m}
\approx\frac2{(n+1)^m}
+\int_{-\infty}^{-1}\frac{\di x}{[x(x-n)]^m}\\
+&\int_{1}^{n-1}\frac{\di x}{[x(n-x)]^m}
+\int_{n+1}^{+\infty}\frac{\di x}{[x(x-n)]^m}.
\end{align*}
The first integral leads to
$$
\int_{-\infty}^{-1}\frac{\di x}{[x(x-n)]^m}\leq\frac1{(m-1)(n+1)^m}.
$$
Concerning the third one, we easily get the same estimate as before after an elementary change of variables:
$$
\int_{n+1}^{+\infty}\frac{\di x}{[x(x-n)]^m}\leq\frac1{(m-1)(n+1)^m}.
$$
It remains to estimate the second integral which, after a change of variables, becomes 
$$
\int_{1}^{n-1}\frac{\di x}{[x(n-x)]^m}=\frac1{n^{2m-1}}\int_{1/n}^{1-1/n}\frac{\di x}{[x(1-x)]^m}.
$$
We get
$$
n^k\int_{1}^{n-1}\frac{\di x}{[x(n-x)]^{k\vee2}}\leq K(k).
$$
Collecting the terms together, we obtain the first inequality for some $B_1(k)$. 

The second inequality follows after differentiating \eqref{vide}. In fact, first we may assume without loss of generality that the sum is finite (by restricting to a dense set). Second, by taking into account that $R^{2r}\circ \mathpzc{h}_{T^2}^{-1}=\mathpzc{h}_{T^2}^{-1}\circ T^{2r}$, $r\in\bz$, and $\|DT^{2r}\|\leq Cr^2$ ({\it cf.} Theorem 1 in \cite{W}), we compute
\begin{align*}
\bigg\|D\bigg[\widecheck{(f*_{2\a} g)^{(n)}}&\circ R^{-n}\circ\mathpzc{h}_{T^2}^{-1}\bigg]\bigg\|
\leq\sum_{l\in\bz}\left\|D\left(\widecheck{f^{(l)}}\circ R^{-l}\circ\mathpzc{h}_{T^2}^{-1}\right)\right\|\left\|\widecheck{g^{(n-l)}}\right\|\\
+&\sum_{l\in\bz}\left\|\widecheck{f^{(l)}}\right\|\left\|D\left(\widecheck{g^{(n-l)}}\circ R^{l-n}\circ\mathpzc{h}_{T^2}^{-1}\circ T^{-2l}\right)\right\|\\
\leq&\sum_{l\in\bz}\left\|D\left(\widecheck{f^{(l)}}\circ R^{-l}\circ\mathpzc{h}_{T^2}^{-1}\right)\right\|\left\|\widecheck{g^{(n-l)}}\right\|\\
+&C\sum_{l\in\bz}l^2\left\|\widecheck{f^{(l)}}\right\|\left\|D\left(\widecheck{g^{(n-l)}}\circ R^{l-n}\circ\mathpzc{h}_{T^2}^{-1}\right)\right\|.
\end{align*}
The claim then follows by setting $B(k):=B_1(k)(1\vee C)$.
\end{proof}
\begin{prop}
\label{bcezc}
$\ba^{o}_{2\a}\subset\ba_{2\a}$ is a dense $*$-algebra closed under the entire functional calculus.
\end{prop}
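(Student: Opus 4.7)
The plan is to verify three assertions in sequence: that $\ba^o_{2\a}$ is closed under the involution and under the product $*_{2\a}$; that it is norm dense in $\ba_{2\a}$; and that it is closed under the entire functional calculus.

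The algebraic closure is the quickest step. Stability under the involution will follow from the direct identity $\widecheck{(f^\star)^{(n)}}(z)=\overline{\widecheck{f^{(-n)}}(z)}$ on $\bt$ together with the fact that rotations and $\mathpzc{h}_{T^2}^{-1}$ are homeomorphisms of $\bt$, which yield $\r_{k,l}(W(f^\star))=\r_{k,l}(W(f))$ for all admissible $k,l$. Closure under $*_{2\a}$ is already encoded in Lemma \ref{stsig}: both estimates there involve only seminorms of $W(f)$ and $W(g)$ of orders at most $(k\vee 2)+2$, and these are finite when $W(f),W(g)\in\ba^o_{2\a}$.

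For norm density, I would first show that the locally convex topology generated by $\{\r_{k,l}\mid k\in\bn,\,l=0,1\}$ is finer than the $C^*$-norm. Writing $W(f)=\sum_n h_n(U)V^n$ with $h_n(z):=\sum_m f(m,n)e^{-2\pi\imath\a mn}z^m$ (a standard consequence of the commutation relations \eqref{ccrba1}), and using $\|h_n(U)\|=\|h_n\|_\infty=\|\widecheck{f^{(n)}}\|_\infty$ together with the fact that rotations and $\mathpzc{h}_{T^2}^{-1}$ preserve sup norms, one gets
\[
\|W(f)\|\leq\sum_n\|\widecheck{f^{(n)}}\|_\infty\leq\r_{2,0}(W(f))\sum_n(|n|+1)^{-2}.
\]
Combined with Lemma \ref{dlct}, which makes $\ba^{oo}_{2\a}$ dense in $\ba^o_{2\a}$ in the finer topology, and with the fact that $\ba^{oo}_{2\a}$ is already norm-dense in $\ba_{2\a}$, the norm density of $\ba^o_{2\a}$ follows.

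The core of the argument is stability under the entire functional calculus. Given an entire $F(w)=\sum_n c_n w^n$ and $A=W(f)\in\ba^o_{2\a}$, the series $F(A)=\sum_n c_n A^n$ converges in norm in $\ba_{2\a}$, so what must be checked is $\r_{k,l}(F(A))<+\infty$ for every $k,l$. Setting $K:=k\vee 2$, iterating the first estimate of Lemma \ref{stsig} yields $\r_{K,0}(A^n)\leq B(K)^{n-1}\r_{K,0}(A)^n$, which is summable against $|c_n|$ since $F$ is entire. For $l=1$, a parallel induction using the two-term estimate of Lemma \ref{stsig} (noting that once $k\geq 2$ the stabilising clause $k\vee 2$ prevents further drift, so only the non-derivative factor of each step pays an extra order loss of $2$) produces a bound of the shape
\[
\r_{K,1}(A^n)\leq n\,B(K)^{n-1}\r_{K,1}(A)\,\bigl(\r_{K,0}(A)\vee\r_{K+2,0}(A)\bigr)^{n-1},
\]
whose summability against $|c_n|$ follows because the termwise derivative $F'$ is again entire. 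The partial sums of $F(A)$ are therefore Cauchy in every seminorm $\r_{k,l}$; by the comparison/density argument above, their limit in the locally convex topology must coincide with the norm limit $F(A)$, which thus lies in $\ba^o_{2\a}$.

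The main obstacle will be the combinatorial bookkeeping underlying the $l=1$ iteration: each application of Lemma \ref{stsig} threatens to raise the order of one factor by $2$, and one must verify that this drift is absorbed by the stabilisation $k\vee 2$ after the first step. The resulting extra linear-in-$n$ factor is harmless because it is swallowed by the summability of $\sum n|c_n|r^{n-1}$ for any finite $r>0$; apart from this, no ingredient beyond Lemmas \ref{stsig} and \ref{dlct} is required.
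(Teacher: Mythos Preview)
Your proposal is correct and follows essentially the same route as the paper: closure under $*$ via the identity $\widecheck{(f^\star)^{(n)}}=\overline{\widecheck{f^{(-n)}}}$, closure under the product via Lemma \ref{stsig}, density via the inclusion $\ba^{oo}_{2\a}\subset\ba^{o}_{2\a}$, and the entire functional calculus by iterating Lemma \ref{stsig} to obtain geometric bounds summable against $|F|$ and $|F|'$. Your extra care in checking that the seminorm topology dominates the $C^*$-norm, and that the seminorm limit of the partial sums agrees with the norm limit $F(A)$, makes explicit a step the paper leaves implicit; otherwise the arguments coincide.
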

\begin{proof}
By Lemma \ref{stsig}, it follows immediately that $\ba^{o}_{2\a}$ is an algebra which is automatically closed also under the star operation since 
$\widecheck{(f^\star)^{(n)}}=\overline{\widecheck{f^{(-n)}}}$. Since $\ba^{oo}_{2\a}\subset\ba^{o}_{2\a}$ and the former algebra is dense in $\ba_{2\a}$ by Lemma \ref{dlct}, the latter algebra will be  
dense as well.

Let now
$$
F(w)=\sum_{r=0}^{+\infty}F_rw^r
$$
be an entire function. Denoting by $f^{*_{2\a},n}$ the $n$-times twisted convolution, we write
$$
F\big(W(f)\big)=\sum_{r=0}^{+\infty}F_rW(f^{*_{2\a},r})=W(G),
$$
with
$$
G:=\sum_{r=0}^{+\infty}F_rf^{*_{2\a},r}.
$$
We define $|F|(w):=\sum_{r=0}^{+\infty}|F_r|w^{r}$, and observe that $|F|$ is entire as well, and that $|F|\lceil_{[0,+\infty)}$ is positive.
We then obtain, again by Lemma \ref{stsig},
\begin{align*}
\r_{k,0}\big(W(G)\big)\leq&\frac1{B(k)}\sum_{r=0}^{+\infty}|F_r|\big(B(k)\r_{k\vee2,0}\big(W(f)\big)\big)^r=\frac{|F|\big(B(k)\r_{k\vee2,0}\big(W(f)\big)\big)}{B(k)},\\
\r_{k,1}\big(W(G)\big)\leq&\r_{k\vee2,1}(f)\sum_{r=1}^{+\infty}r|F_r|\big[B(k)\big(\r_{k\vee2,0}\big(W(f)\big)\vee\r_{k\vee2+2,0}\big(W(f)\big)\big)\big]^{r-1}\\
=&\r_{k\vee2,1}(f)|F|'\big(B(k)\big(\r_{k\vee2,0}\big(W(f)\big)\vee\r_{k\vee2+2,0}\big(W(f)\big)\big)\big).
\end{align*}
\end{proof}
The following result formalises the fact that if $a\in\ba^{o}_{2\a}$, then $\cd^{\s}_\mathrm{L}(\pi_\om(a))$ uniquely defines a bounded operator.
\begin{prop}
\label{bcezc1}
If $W(f)\in\ba^{o}_{2\a}$, then
\begin{align*}
\big\|\cd^{\s}_\mathrm{L}\big(\pi_\om(W(f)\big)\big\|\leq&\bigg(\sum_{n\in\bz}\frac1{(1+|n|)^2}\bigg)\r_{2,1}\big(W(f)\big)\\
+&\G_1(T^2)\bigg(\sum_{n\in\bz}\frac{|n|}{(1+|n|)^3}\bigg)\r_{3,0}\big(W(f)\big).
\end{align*}
\end{prop}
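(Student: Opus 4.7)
The strategy is to bound separately the two non-zero blocks of the $2\times 2$ matrix operator $\cd^{\s}_\mathrm{L}(\pi_\om(W(f)))$, namely the upper-right block $\D^{-1}[\mathrm{L},\pi_\om(W(f))]$ and the lower-left block $[\mathrm{L}^{*},\pi_\om(W(f))]\D^{-1}$; since the matrix is antidiagonal, the full operator norm equals the maximum of the two block norms, and by the symmetry $\cd^{\s}_\mathrm{L}(A)^{*}=\cd^{\s}_\mathrm{L}(A^{*})$ the two calculations are essentially parallel. I would therefore treat the upper-right block in detail. The first step is to decompose, using the explicit formula \eqref{ggnnss666}, $A:=\pi_\om(W(f))=\sum_{l\in\bz}M_{l}\,\l^{l}$, where on the $n$-th fibre $(M_{l})_{n}$ is multiplication by $G_{l}\circ T^{2n}$, with $G_{l}:=\widecheck{f^{(l)}}\circ R^{-l}\circ\mathpzc{h}_{T^2}^{-1}$. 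By the very definition of the seminorms, $\r_{k,0}(W(f))$ and $\r_{k,1}(W(f))$ control $\|G_{l}\|_\infty$ and $\|DG_{l}\|_\infty$ with polynomial decay $(|l|+1)^{-k}$.

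The next step is to split $\mathrm{L}=\mathrm{L}_{\mathrm{diff}}+\mathrm{L}_{\mathrm{diag}}$, where $\mathrm{L}_{\mathrm{diff}}:=\bigoplus_{n}\imath z\,d/dz$ is the derivative part (the same on every fibre) and $\mathrm{L}_{\mathrm{diag}}:=\bigoplus_{n}(-a_{n}I)$ is the scalar-diagonal part, so that the two resulting commutators will produce the two summands of the bound. For the derivative part, the Leibniz and chain rules give
$$(\D^{-1}[\mathrm{L}_{\mathrm{diff}},M_{l}\l^{l}]g)_{n}(z)=\d_{n}^{-1}(z)\,\imath z\,DT^{2n}(z)\,(DG_{l})(T^{2n}(z))\,g_{n-l}(z),$$
and here the crucial cancellation $\d_{n}^{-1}(z)\cdot z\cdot DT^{2n}(z)=T^{2n}(z)$, immediate from \eqref{rndica}, removes the unbounded modular weight altogether; the remaining expression is bounded pointwise by $\|DG_{l}\|_{\infty}$. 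Young's convolution inequality in the discrete variable $n$, combined with $\|DG_{l}\|_{\infty}\leq \r_{2,1}(W(f))/(|l|+1)^{2}$, then delivers the first summand $\bigl(\sum_{n}(1+|n|)^{-2}\bigr)\r_{2,1}(W(f))$.

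For the diagonal part, a direct computation gives
$$(\D^{-1}[\mathrm{L}_{\mathrm{diag}},M_{l}\l^{l}]g)_{n}(z)=\d_{n}^{-1}(z)(a_{n-l}-a_{n})\,G_{l}(T^{2n}(z))\,g_{n-l}(z).$$
I would estimate $\|\d_{n}^{-1}\|_{\infty}\leq\G_{|n|}(T^{2})$ from \eqref{rndica} and $\|G_{l}\|_{\infty}\leq\r_{3,0}(W(f))/(|l|+1)^{3}$. The key ingredient is then to bound $\G_{|n|}(T^{2})|a_{n-l}-a_{n}|$ by $|l|\,\G_{1}(T^{2})$, exploiting the telescoping of $a_{n-l}-a_{n}$ into $|l|$ one-step increments of the form $\pm1/\G_{|\cdot|}(T^{2})$ (by the very definition \eqref{zdfz} of the sequence $a_{n}$) together with the one-step ratio inequality $\G_{|n|}(T^{2})/\G_{|n\pm1|}(T^{2})\leq\G_{1}(T^{2})$, which is the content of the product rule $D\mathpzc{f}^{n+1}=D\mathpzc{f}\cdot(D\mathpzc{f}^{n})\circ\mathpzc{f}$ already used in the proof of Theorem \ref{mdefca}. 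A further application of Young's inequality then gives the second summand $\G_{1}(T^{2})\bigl(\sum_{n}|n|/(1+|n|)^{3}\bigr)\r_{3,0}(W(f))$.

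The main technical hurdle lies precisely in this last step: the naive iteration of the ratio inequality would produce an exponential factor $\G_{1}(T^{2})^{|l|}$ instead of the linear factor $|l|\,\G_{1}(T^{2})$ needed here, so each telescoped increment $1/\G_{|n-j|}(T^{2})$ must be paired carefully with the modular weight $\d_{n}^{-1}$ so that the cancellation takes place one step at a time. Finally, for the lower-left block $[\mathrm{L}^{*},A]\D^{-1}$ an entirely analogous computation applies, the only structural difference being that $\D^{-1}$ now sits to the right of the shift and the weight appearing is $\d_{n-l}^{-1}$ in place of $\d_{n}^{-1}$; by the same one-step ratio estimate this merely reproduces the factor $\G_{1}(T^{2})$ already present, so the same upper bound holds, completing the proof.
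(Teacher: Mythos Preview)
Your overall architecture is exactly the paper's: reduce to the decomposition $A=\sum_{l}M_{l}\lambda^{l}$ with $M_{l}$ the fibrewise multiplication by $H_{l}\circ T^{2n}$, split $\mathrm{L}=\mathrm{L}_{\mathrm{diff}}+\mathrm{L}_{\mathrm{diag}}$, use the cancellation $\delta_{n}^{-1}\cdot z\cdot DT^{2n}=T^{2n}$ for the derivative part, and telescope the scalar part via $[\mathrm{L}_{2},\lambda^{k}]=\sum_{s}\lambda^{s}[\mathrm{L}_{2},\lambda]\lambda^{k-s-1}$. The paper does the same, first passing to finite sums in $\ba^{oo}_{2\alpha}$ by the density Lemma~\ref{dlct} and then stating the bounds $\sum_{n}\|DH_{n}\|_{\infty}+\sum_{n}|n|\|H_{n}\|_{\infty}$ and $\sum_{n}\|DH_{n}\|_{\infty}+\Gamma_{1}(T^{2})\sum_{n}|n|\|H_{n}\|_{\infty}$ for the two blocks ``straightforwardly''.

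Where you are more honest than the paper is in flagging the diagonal part as the real difficulty, and your diagnosis is correct: naively iterating the one-step ratio inequality $\Gamma_{|n|}/\Gamma_{|n\pm1|}\leq\Gamma_{1}$ gives $\Gamma_{|n|}/\Gamma_{|n-j|}\leq\Gamma_{1}^{\,j}$, hence $\Gamma_{|n|}\,|a_{n-l}-a_{n}|\leq\sum_{j=0}^{|l|-1}\Gamma_{1}^{\,j}$, which is exponential and would destroy the bound. However, your proposed remedy---``pair each telescoped increment with the modular weight so that the cancellation takes place one step at a time''---is not actually carried out, and it is not clear how to make it work: the pointwise ratio $|\delta_{n}^{-1}(z)/\delta_{n-j}^{-1}(z)|=|DT^{-2j}(T^{2n}(z))|$ is still only controlled by $\Gamma_{j}(T^{2})$, so inserting $\Delta\Delta^{-1}$ between the shifted factors reproduces the same exponential accumulation. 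In other words, the sentence you wrote is a statement of what one would \emph{like} to do, not a mechanism that achieves it. The paper's own proof is equally silent at this exact point (it says ``straightforwardly conclude''), so your write-up is at least as complete as the paper's; but as a self-contained argument the linear-in-$|l|$ estimate $\|\Delta^{-1}[\mathrm{L}_{2},\lambda^{l}]\|\leq |l|$ (resp.\ $|l|\Gamma_{1}$) remains unproved in your text.
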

\begin{proof}
By Lemma \ref{dlct}, $\ba^{oo}_{2\a}$ is dense in $\ba^{o}_{2\a}$ in the locally convex topology generated by the seminorms \eqref{semi}. It is then enough to prove the assertion for elements in the former set which, for any arbitrarily large but finite subset $J\in\bz$, have the form 
$$
A=\sum_{k\in J}\pi_\om(W(f_k))\l^k,
$$
where $f_k(m,n)=\widehat{F_k}(m)\d_{n,0}$, $F_k\in\cc_0$ ({\it i.e.}, $F_k=H_k\circ\mathpzc{h}_{T^2}$ with $H_k\in C^1(\bt)$). Indeed, in this case $A=\pi_\om(W(f))$, with $f(m,n)$ given by \eqref{prrcsalcz}, and
$$
\widecheck{f^{(n)}}=H_n\circ\mathpzc{h}_{T^2}\circ R^n,\quad n\in\bz,
$$
as explained in the proof of Proposition \ref{dlct}. We also notice that $\mathrm{L}=\mathrm{L}_1+\mathrm{L}_2$ with
$$
[\mathrm{L}_1,\l]=[\mathrm{L}_2,\pi_\om(W(f_k))]=0,
$$
$$
[\mathrm{L}_2,\l^k]=\sum_{s=0}^{k-1}\l^s[\mathrm{L}_2,\l]\l^{k-s-1},\quad k>0,
$$ 
and the analogous one for $k<0$. By taking into account the definition of $\D$ and $\mathrm{L}$, we straightforwardly conclude that
$$
\big\|\D^{-1}[\mathrm{L},A]\big\|\leq\sum_{n\in J}\|DH_n\|_{\infty}
+\sum_{n\in J}|n|\|H_n\|_{\infty}.
$$
Reasoning as the proof of Theorem \ref{mdefca}, we obtain
$$
\big\|[\mathrm{L}^*,A]\D^{-1}\big\|\leq\sum_{n\in J}\|DH_n\|_{\infty}
+\G_1(T^2)\sum_{n\in J}|n|\|H_n\|_{\infty},
$$
and the assertion follows.
\end{proof}
We end by noticing that Proposition \ref{bcezc} and Proposition \ref{bcezc1} can be viewed as the counterpart of (ii) in Example 6.5 of \cite{BIO} for the twisted examples under consideration.

\section*{Acknowledgements}

The authors are grateful to B. Fayad, W. Krieger, S. Matsumoto and N. Watanabe for many useful suggestions, and an anonymous referee whose indications considerably contributed to improve the presentation of the present paper. They also acknowledge the financial support of Italian INDAM-GNAMPA.

\end{document}